\numberwithin{equation}{section}
\newtheoremstyle{plainNoItalics}{}{}{\normalfont}{}{\bfseries}{.}{ }{}
\theoremstyle{plain}
\newtheorem{thm}{Theorem}[section]
\theoremstyle{plainNoItalics}
\newtheorem{lem}[thm]{Lemma}
\newtheorem{exa}[thm]{Example}
\newcommand{\bx}{{\bf x}}
\newcommand{\mD}{{\mathcal D}}
\newcommand{\mP}{{\mathcal P}}
\newcommand{\mO}{{\mathcal O}}
\newcommand{\be}{\begin{eqnarray}}
\newcommand{\ee}{\end{eqnarray}}
\newcommand{\beno}{\begin{eqnarray*}}
	\newcommand{\eeno}{\end{eqnarray*}}
\newcommand{\Rmnum}[1]{\expandafter\@slowromancap\romannumeral #1@}
\begin{document}

\baselineskip=1.8pc

%\vspace*{.10in}

%=============  title  =========================
\begin{center}
{\bf
A Kernel Based High Order \lq\lq Explicit'' Unconditionally Stable Scheme for Time Dependent Hamilton-Jacobi Equations
}
\end{center}

\vspace{.2in}
\centerline{
Andrew Christlieb\footnote{
 Department of Computational Mathematics, Science and Engineering, Department of Mathematics  and  Department of Electrical Engineering, Michigan State University, East Lansing, MI, 48824. E-mail: christli@.msu.edu
},
Wei Guo \footnote{
	Department of Mathematics and Statistics, Texas Tech University, Lubbock, TX, 70409. Research
	is supported by NSF grant NSF-DMS-1620047. E-mail: weimath.guo@ttu.edu
},
and 
Yan Jiang\footnote{Department of Mathematics, Michigan State University, East Lansing, MI, 48824. E-mail: jiangyan@math.msu.edu}
}

\bigskip
\noindent
{\bf Abstract.}

In this paper,  a class of high order numerical schemes is proposed for solving Hamilton-Jacobi (H-J) equations. This work is regarded as an extension of our previous work for nonlinear degenerate parabolic equations, see Christlieb et al. \cite{christlieb2017kernel},
%\emph{arXiv preprint arXiv:1707.09294},
which relies on a special kernel-based formulation of the solutions and successive convolution. When applied to the H-J equations, the newly proposed scheme attains genuinely high order accuracy in both space and time, and more importantly, it is unconditionally stable, hence allowing for much larger time step evolution compared with other explicit schemes and saving computational cost. A high order weighted essentially non-oscillatory methodology and a novel nonlinear filter are further incorporated to capture the correct viscosity solution. Furthermore, by coupling the recently proposed inverse Lax-Wendroff boundary treatment technique, this method is very flexible in handing complex geometry as well as general boundary conditions.   
We perform numerical experiments on a collection of numerical examples, including H-J equations with linear, nonlinear, convex or non-convex Hamiltonians. The efficacy and efficiency of the proposed scheme in approximating the viscosity solution of general H-J equations  is verified.

\vfill

{\bf Key Words:} Hamilton-Jacobi equation; Kernel based scheme; Unconditionally stable; High order accuracy; Weighted essentially non-oscillatory methodology; Viscosity solution.

%\newpage

\section{Introduction}

In this paper, we propose a class of high order non-oscillatory numerical schemes for approximating the viscosity solution to the Hamilton-Jacobi (H-J) equation 
\begin{align}
\label{eq:HJ}
\begin{cases}
%\phi_{t}+H(\phi_{x_{1}},\ldots,\phi_{x_{d}})=0,\quad x\in\Omega\subset \mathbb{R}^{d}\\
\phi_{t}+H(\nabla_{\overline{\bx}} \phi,\overline{\bx},t)=0,\quad \overline{\bx}\in\Omega\subset \mathbb{R}^{d},\\
\phi(\overline{\bx},0)=\phi^{0}(\overline{\bx}),
\end{cases}
\end{align}
with suitable boundary conditions. The H-J equations find applications in diverse fields, including optimal control, seismic waves, crystal growth, robotic navigation, image processing, and calculus of variations. The well-known property of the H-J equations is that the weak solutions may not be unique, and the concept of viscosity solutions was introduced  to single out the unique physically relevant solution \cite{crandall1983viscosity, crandall1984some}. The viscosity solution to the H-J equation \eqref{eq:HJ} is known to be Lipschitz continuous but may develop discontinuous derivatives in finite time, even with a $\mathcal{C}^\infty$ initial condition.

A variety of numerical schemes have been proposed to solve the H-J equation \eqref{eq:HJ} in the literature, including for example, the monotone schemes \cite{crandall1984two, abgrall1996numerical, lafon1996high}, the essentially non-oscillatory (ENO) schemes \cite{osher1988fronts, osher1991high} or the weighted ENO (WENO) schemes \cite{jiang2000weighted, zhang2003high}, the Hermite WENO (HWENO) schemes \cite{qiu2005hermite, qiu2007hermite, tao2017dimension, zheng2017finite, zhu2013hermite}, the discontinuous Galerkin methods \cite{hu1999discontinuous, lepsky2000analysis, cheng2007discontinuous, yan2011local, cheng2014new}. More details about recent development of high-order numerical methods for solving the H-J equations can be found in the review paper \cite{shu2007high}.
Most of these methods are in the Method of Lines (MOL) framework, which means that the discretization is first carried out for the spatial variable, then the numerical solution is updated in time by coupling a suitable time integrator. The most commonly used time evolution methods are the strong-stability-preserving
Runge-Kutta (SSP RK) schemes and SSP multi-step schemes \cite{gottlieb2001strong, shu2002survey, gottlieb2005high}, which can preserve the strong stability in some desired norm and prevent spurious oscillations near spatial discontinuities. However, it is well known that an explicit time discretization does have a restriction on the time step in order to maintain linear stability.  Recently,  optimization algorithms based on the Hopf formula were developed for solving a class of  H-J equations from optimal control and differential games \cite{darbon2016algorithms,chow2017algorithm}. Such algorithms are able to overcome the curse of dimensionality when solving \eqref{eq:HJ} with large $d$ and have attracted a lot of attention. However, the applicability of this type of algorithms depends on the Hopf formula, which is not available for general H-J equations, such as when the Hamiltonian $H$ and the initial condition $\phi^0$ are both non-convex.

 Another framework named the Method of Lines Transpose (MOL$^T$) has been exploited in the literature for solving time-dependent partial differential equations (PDEs). It is also known as Rothes method or transverse method of lines \cite{salazar2000theoretical, schemann1998adaptive, causley2014method}. In such a framework for solving linear PDEs, the temporal variable is first discretized, resulting in a set of linear boundary value problems (BVPs) at  discrete time levels. Each BVP can be inverted analytically in an integral formulation based on a Green's/kernel function and then the numerical solution is updated accordingly. As a notable advantage, the MOL$^T$ approach is able to use an implicit method but avoid solving linear systems at each time step, see \cite{causley2014method}. Moreover, a fast convolution algorithm is developed to reduce the computational complexity of the scheme from $\mathcal{O}(N^2)$ to $\mathcal{O}(N)$ \cite{causley2013method, greengard1987fast, barnes1986hierarchical}, where $N$ is the number of discrete mesh points. Over the past several years, the MOL$^T$ methods have been developed for solving the heat equation \cite{causley2017method, causley2016method, kropinski2011fast, jia2008krylov},  Maxwell's equations \cite{cheng2017asymptotic}, the split Vlasov equation \cite{christlieb2016weno}, among many others. This methodology can be generalized to solving  some nonlinear problems, such as the Cahn-Hilliard equation \cite{causley2017method}.  However, it rarely applied to general nonlinear problems, mainly because efficient fast algorithms of inverting nonlinear BVPs are lacking and hence the advantage of the MOL$^T$ is compromised.   

More recently, the authors proposed a novel numerical scheme for solving the nonlinear degenerate parabolic equations \cite{christlieb2017kernel}. Following the MOL$^T$ philosophy,
 %of employing a kernel-based approach, 
 we express the spatial derivatives in terms of a kernel-based representation. 
By construction, the solution is evolved by an explicit SSP RK method, similar to the MOL approach. However, by  carefully choosing a parameter $\beta$ introduced in the formulation, the scheme is proven to be unconditionally stable, and hence allowing for large time step evolution and saving computational cost. This is considered the most remarkable property of the proposed method. The robust WENO methodology and a nonlinear filter are further employed to capture sharp gradients of the solution without producing spurious oscillations, and at the same time, the high order accuracy is attained in smooth regions. The scheme can be easily implemented in a nonuniform but orthogonal mesh and hence very effective in handling complex geometry.

In this paper, we extend the work in \cite{christlieb2017kernel} to solve H-J equations \eqref{eq:HJ}. In particular, under such a framework, the  spatial derivatives are still reconstructed by the kernel-based approach, which is genuinely high order accurate in both space and time and essentially non-oscillatory due to the use of the WENO methodology as well as the nonlinear filter. This is very desired since the viscosity solution of the H-J equations may develop discontinuous derivatives. More importantly, the scheme is unconditionally stable if the parameter $\beta$ is appropriately chosen.  By further incorporating the recently proposed  high order inverse Lax-Wendroff (ILW) boundary treatment technique, the scheme is able to deal with many types of boundary conditions effectively such as non-homogeneous Dirichlet and Neumann boundary conditions.   In summary, the proposed scheme for solving the H-J equations  is robust, high order accurate,  unconditionally stable, flexible and  efficient.

The rest of this paper is organized as follows. In Section 2, we present our numerical scheme for one-dimensional (1D) H-J equations. The two-dimensional (2D) case is considered in Section 3. In Section 4, a collection of numerical examples is presented to demonstrate the performance of the proposed method. In Section 5, we conclude the paper with some remarks and future work.
\section{One-dimensional case}

In the 1D case, \eqref{eq:HJ} becomes
\begin{align}
\label{eq:1D}
\phi_{t}+ H(\phi_{x})=0,\quad \phi(x,0)=\phi^{0}(x).
\end{align}
Assume the domain is a closed interval $[a,b]$ which is partitioned with $N+1$ points
\begin{align}
a=x_{0}<x_{1}<\cdots<x_{N-1} <x_{N} =b,
\end{align}
with $\Delta x_{i}=x_{i}-x_{i-1}$. Note that the mesh can be nonuniform. Let $\phi_{i}(t)$ denote the solution $\phi(x_{i},t)$ at mesh point $x_i$. 

As with many H-J solvers, we will construct the following semi-discrete scheme 
\begin{align}
\label{eq:1Dscheme}
\frac{d}{dt}\phi_{i}(t)+\hat{H}(\phi_{x,i}^{-},\phi_{x,i}^{+}) =0,\quad i=0,\ldots,N,
\end{align}
where $\hat{H}$ is a numerical Hamiltonian. In this work, we use
%, which is a Lipschitz continuous monotone flux consistent with $H$:
%$$\hat{H}(u,u)=H(u).$$
%Monotonicity means that $\hat{H}$ is  non-increasing in its first argument and non-decreasing in the second one. Symbolically, $\hat{H}(\uparrow,\downarrow)$. 
the local Lax-Friedrichs flux
\begin{align}
\label{eq:LLF}
\hat{H}(u^{-},u^{+})=H(\frac{u^{-}+u^{+}}{2}) -\alpha(u^{-},u^{+})\frac{u^{+}-u^{-}}{2},
\end{align}
where $\alpha(u^{-},u^{+})=\max_u |H'(u)|$ with the maximum taken over the range
%$I(u^{-},u^{+})=[\min(u^{-},u^{+}), \max(u^{-},u^{+})].$
bounded by $u^-$ and $u^+$.
$\phi_{x,i}^{-}$ and $\phi_{x,i}^{+}$ in \eqref{eq:1Dscheme} are the approximations to $\phi_{x}$ at $x_{i}$ obtained by left-biased and right-biased methods, respectively, to take into the account the direction of characteristics propagation of the H-J equation. This is the main part of this paper and will be detailed in the following subsections. 

\subsection{Approximation of the first order derivative $\partial_{x}$}
We start with a brief review on construction of the kernel-based representation of $\partial_{x}$ proposed in \cite{christlieb2017kernel}.
First, we introduce two operators $\mathcal{L}_{L}$ and $\mathcal{L}_{R}$ %to account for waves traveling in opposite directions:
\begin{align}
\label{eq:operL}
\mathcal{L}_{L}=\mathcal{I}+\frac{1}{\gamma}\partial_{x}, \quad
\mathcal{L}_{R}=\mathcal{I}-\frac{1}{\gamma}\partial_{x},  \quad x\in[a,b],
\end{align} 
where $\mathcal{I}$ is the identity operator and $\gamma>0$ is a constant. Then we can invert $\mathcal{L}_{L}$ and $\mathcal{L}_{R}$ analytically as follows:
\begin{subequations}
\begin{align}
\label{eq:LLinverse}
& \mathcal{L}_{L}^{-1}[v,\gamma](x)=I^{L}[v,\gamma](x) + A_{L}e^{-\gamma (x-a)},\\
\label{eq:LRinverse}
& \mathcal{L}_{R}^{-1}[v,\gamma](x)=I^{R}[v,\gamma](x) + B_{R} e^{-\gamma (b-x)},
\end{align}
\end{subequations}
where
\begin{subequations}
\begin{align}
\label{eq:IL}
& I^{L}[v,\gamma](x)=\gamma \int_a^x e^{-\gamma (x-y)}v(y)dy,\\
\label{eq:IR}
& I^{R}[v,\gamma](x)=\gamma \int_x^{b} e^{-\gamma (y-x)}v(y)dy,
\end{align}
\end{subequations}
with constant $A_L$ and $B_{R}$ being determined by the boundary condition imposed for the operators. For example, if assume $\mathcal{L}_{L}^{-1}$ and $\mathcal{L}_{R}^{-1}$ are periodic, i.e., 
$$\mathcal{L}_{L}^{-1}[v,\gamma](a)=\mathcal{L}_{L}^{-1}[v,\gamma](b), \quad \text{and} \quad
\mathcal{L}_{R}^{-1}[v,\gamma](a)=\mathcal{L}_{R}^{-1}[v,\gamma](b),$$
then we have
\begin{align}
\label{eq:bc_per}
A_{L}=\frac{I^{L}[v,\gamma](b)}{1-\mu}, \quad \text{and} \quad B_{R}=\frac{I^{R}[v,\gamma](a)}{1-\mu},
\end{align} 
with $\mu=e^{-\gamma(b-a)}$. 
%If we want the operators to satisfy boundary conditions
%$$\mathcal{L}_{L}^{-1}[v,\gamma](a)=C_{a}, \quad \text{and} \quad
%\mathcal{L}_{R}^{-1}[v,\gamma](b)=C_{b},$$
%with given numerbers $C_{a}$ and $C_{b}$, then the coefficients are given as
%\begin{align}
%\label{eq:bc_dir}
%A_{L}=C_{a}, \quad \text{and} \quad B_{R}=C_{b}.
%\end{align} 

In addition, it is straightforward to show that
\begin{subequations}
	\label{eq:sum}
	\begin{align}
	\label{eq:LL}
	& \frac{1}{\gamma}\partial_{x}=\mathcal{L}_{L}-\mathcal{I} 
	=\mathcal{L}_{L} (\mathcal{I}-\mathcal{L}^{-1}_{L})
	=\mathcal{D}_{L}/(\mathcal{I} -\mathcal{D}_{L} )
	=\sum_{p=1}^{\infty}\mathcal{D}_{L}^{p}.\\
	\label{eq:LR}
	& \frac{1}{\gamma}\partial_{x}=\mathcal{I} - \mathcal{L}_{R}
	=\mathcal{L}_{R} (\mathcal{L}^{-1}_{R}-\mathcal{I})
	=-\mathcal{D}_{R}/(\mathcal{I} -\mathcal{D}_{R} )
	=-\sum_{p=1}^{\infty}\mathcal{D}_{R}^{p},
	\end{align}
\end{subequations}
where the two operators $\mathcal{D}_{L}$ and $\mathcal{D}_{R}$ are defined by
\begin{align}	
\label{eq:operD}
\mathcal{D}_{L}=\mathcal{I}-\mathcal{L}^{-1}_{L}, \quad
\mathcal{D}_{R}=\mathcal{I}-\mathcal{L}^{-1}_{R}, \quad x\in[a,b].
\end{align} 

Hence, following the idea in \cite{christlieb2017kernel}, when $\phi$ is a periodic function, we can approximate the first derivative $\phi^{\pm}_{x}$ with (modified) partial sums in \eqref{eq:sum}, 
\begin{subequations}
	\label{eq:partialsum_per}
\begin{align}
	\phi_{x}^{-}(x)\approx \mP^{L}_{k}[\phi,\gamma](x) = \left\{\begin{array}{ll}
	\gamma\sum\limits_{p=1}^{k}\mD_{L}^{p}[\phi,\gamma](x), & k=1, 2,\\
	\gamma\sum\limits_{p=1}^{k}\mD_{L}^{p}[\phi,\gamma](x) - \gamma\mD_{0}*\mD_{L}^2[\phi,\gamma](x), & k=3,\\
	\end{array}
	\right.
\end{align}
\begin{align}
	\phi_{x}^{+}(x)\approx \mP^{R}_{k}[\phi,\gamma](x)  = \left\{\begin{array}{ll}
	-\gamma\sum\limits_{p=1}^{k}\mD_{R}^{p}[\phi,\gamma](x), & k=1, 2,\\
	-\gamma\sum\limits_{p=1}^{k}\mD_{R}^{p}[\phi,\gamma](x) + \gamma\mD_{0}*\mD_{R}^2[\phi,\gamma](x), & k=3.\\
	\end{array}
	\right.
\end{align}
\end{subequations}
Note that by construction, $\mD_{L}^{p}[\phi,\gamma](x)$ only depends on the values of $\phi$ on $[a,x]$, while $\mD_{R}^{p}[\phi,\gamma](x)$ only depends on the values of $\phi$ on $[x,b]$. This design in fact takes into account the direction of propagating characteristics of the H-J equation. Also note that there is an extra term for $k=3$. As remarked in  \cite{christlieb2017kernel}, such a term is needed for 
attaining unconditional stability of the scheme.
$\mD_{0}$ is defined as
\begin{align}
\label{eq:D0}
\mD_{0}[v,\alpha](x) = v(x) -\frac{\gamma}{2}\int_{a}^{b}e^{-\gamma|x-y|}v(y)dy -A_{0}e^{-\gamma(x-a)}-B_{0}e^{-\gamma(b-x)}.
\end{align}
The coefficients $A_{0}$ and $B_{0}$ are also obtained from the boundary condition. For instance, if we require $\mD_{0}[v,\alpha](x)$ to be a periodic function, i.e.,
$$\mD_{0}[v,\gamma](a)=\mD_{0}[v,\gamma](b), \quad\text{and}\quad \partial_{x}\mD_{0}[v,\gamma](a)=\partial_{x}\mD_{0}[v,\gamma](b),$$
then we have
\begin{align}
	\label{eq:gxx_bc_per}
A_{0} = \frac{I^{0}[v,\gamma](b)}{1-\mu}, \quad
B_{0} = \frac{I_{0}[v,\gamma](a)}{1-\mu}
\end{align}
with $I^{0}[v,\gamma](x)=\gamma/2\int_{a}^{b}e^{-\gamma|x-y|}v(y)dy$. 
%If we want 
%$$\mD_{0}[v,\gamma](a)=C_{a}, \quad\text{and}\quad \mD_{0}[v,\gamma](b)=C_{b},$$
%for some given number $C_{a}$ and $C_{b}$, then, the coefficients are given as
%\begin{subequations}
%	\label{eq:gxx_bc_dir}
%	\begin{align}
%	& A_{0}=\frac{1}{1-\mu^2}\left( \mu\left(I^{0}[v,\gamma](b)-v(b)+C_{b}\right) - \left(I^{0}[v,\gamma](a)-v(a)+C_{a}\right)\right), \\
%	& B_{0}=\frac{1}{1-\mu^2}\left( \mu\left(I^{0}[v,\gamma](a)-v(a)+C_{a}\right) - \left(I^{0}[v,\gamma](b)-v(b)+C_{b}\right)\right).
%	\end{align}
%\end{subequations}

It is natural to require the following boundary conditions for the operators
\begin{align}
\mD^{p}_{L}(a)=\mD_{L}^{p}(b), \quad \mD^{p}_{L}(a)=\mD_{L}^{p}(b), \quad \mD_{0}(a)=\mD_{0}(b),
\end{align}
 where $p\geq 1$, if periodic boundary conditions of the solution are imposed. 
Moreover, an error estimate for the partial sums approximation \eqref{eq:partialsum_per} carried out in \cite{christlieb2017kernel}  shows that
\begin{subequations}
	\begin{align}
	& \|\partial_{x}\phi(x)-\mP^{L}_{k}[\phi,\gamma](x) \|_{\infty}\leq C \left(\frac{1}{\gamma}\right)^{k} \|\partial_{x}^{k+1}\phi(x)\|_{\infty},\\
	& \|\partial_{x}\phi(x)-\mP^{R}_{k}[\phi,\gamma](x) \|_{\infty}\leq C \left(\frac{1}{\gamma}\right)^{k} \|\partial_{x}^{k+1}\phi(x)\|_{\infty}.
	\end{align}
\end{subequations}

In numerical simulations, we will take $\gamma=\beta/(\alpha\Delta t)$ in \eqref{eq:partialsum_per}, with $\alpha$ being the maximum wave propagation speed. Here, $\Delta t$ denotes the time step and $\beta$ is a constant independent of $\Delta t$. Hence, the partial sums approximate $\phi_{x}$ with accuracy $\mathcal{O}(\Delta t^k)$.

For time integration, we propose to use the classical explicit SSP RK methods \cite{gottlieb2001strong} to advance the solution from time $t^{n}$ to $t^{n+1}$. We denote $\phi^{n}$ as the semi-discrete solution at time $t^{n}$. In this paper, we use the following SSP RK methods, including the first order forward Euler scheme
\begin{align}
\label{eq:rk1}
\phi^{n+1}=\phi^{n}-\Delta t \hat{H}(\phi^{n,-}_{x},\phi^{n,+}_{x});
\end{align} 
the second order SSP RK scheme
\begin{align}
\label{eq:rk2}
& \phi^{(1)}=\phi^{n}-\Delta t \hat{H}(\phi^{n,-}_{x},\phi^{n,+}_{x}),\nonumber\\
& \phi^{n+1}=\frac{1}{2}\phi^{n}+\frac{1}{2}\left( \phi^{(1)} -\Delta t \hat{H}(\phi^{(1),-}_{x},\phi^{(1),+}_{x}) \right);
\end{align}
and the third order SSP RK scheme
\begin{align}
\label{eq:rk3}
& \phi^{(1)}=u^{n}-\Delta t \hat{H}(\phi^{n,-}_{x},\phi^{n,+}_{x}),\nonumber\\
& \phi^{(2)}=\frac{3}{4}\phi^{n}+\frac{1}{4} \left( \phi^{(1)}-\Delta t \hat{H}(\phi^{(1),-}_{x},\phi^{(1),+}_{x}) \right), \nonumber\\
& \phi^{n+1}=\frac{1}{3}\phi^{n}+\frac{2}{3} \left( \phi^{(2)}-\Delta t \hat{H}(\phi^{(2),-}_{x},\phi^{(2),+}_{x}) \right).
\end{align}
In addition, linear stability of the proposed kernel-based schemes has been established in \cite{christlieb2017kernel}. In particular, we proved the following theorem.
\begin{thm}\label{thm4}
For the linear equation $\phi_{t}+c\phi_{x}=0$, (i.e. the Hamiltonian is linear) with periodic boundary conditions, we consider the $k^{th}$ order SSP RK method as well as the  $k^{th}$ partial sum in \eqref{eq:partialsum_per}, with $\gamma=\beta/(|c|\Delta t)$. Then there exists a constant $\beta_{k,max}>0$ for $k=1,\,2,\,3$, such that the scheme is unconditionally stable provided  $0<\beta\leq\beta_{k,\max}$. The constants $\beta_{k,max}$ for $k=1,\,2,\,3$ are summarized in Table \ref{tab0}.
\end{thm}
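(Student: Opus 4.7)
The plan is to carry out a von Neumann (Fourier) analysis, exploiting the fact that $\phi_t + c\phi_x = 0$ has constant coefficient and periodic boundary conditions, so the pure Fourier modes $e^{i\xi x}$ simultaneously diagonalize every operator appearing in the scheme. I would first compute the symbols of the building-block operators on a generic mode $e^{i\xi x}$: since $\mathcal{L}_L = \mathcal{I} + \gamma^{-1}\partial_x$ sends $e^{i\xi x}$ to $(1+z)\,e^{i\xi x}$ with $z := i\xi/\gamma$, and analogously for $\mathcal{L}_R$, one reads off
\[\widehat{\mathcal{L}_L^{-1}} = \frac{1}{1+z}, \qquad \widehat{\mathcal{L}_R^{-1}} = \frac{1}{1-z}, \qquad \widehat{\mD_L} = \frac{z}{1+z}, \qquad \widehat{\mD_R} = -\frac{z}{1-z}.\]
On periodic modes the symbol of $\mD_0$ coincides with that of $\mathcal{I} - \mathcal{L}_L^{-1}\mathcal{L}_R^{-1}$, namely $-z^2/(1-z^2)$. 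From \eqref{eq:partialsum_per} the partial-sum operators therefore have symbols of the form $\gamma f_k(z)$ for explicit rational functions $f_k$; for instance $f_1(z) = z/(1+z)$ and $f_2(z) = (z+2z^2)/(1+z)^2$, with a lengthier but still elementary expression for $f_3$.

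Next, because the Hamiltonian is linear with speed $c$, the local Lax--Friedrichs flux \eqref{eq:LLF} collapses to pure upwinding: if $c > 0$ one has $\hat H = c\phi_x^-$ and every Fourier mode satisfies $\hat\phi_t = -c\gamma f_k(z)\hat\phi$, the $c<0$ case being symmetric through $\mP^R_k$. One SSP RK$k$ step then multiplies $\hat\phi(\xi)$ by $R_k\bigl(-c\gamma\Delta t\,f_k(z)\bigr)$, where $R_k$ is the stability polynomial of the method. The essential observation is that the choice $\gamma = \beta/(|c|\Delta t)$ makes $c\gamma\Delta t = \pm\beta$, so $\Delta t$ cancels and the per-step amplification factor reduces to
\[g(\xi;\beta) \;=\; R_k\bigl(-\beta f_k(z)\bigr), \qquad z \in i\mathbb{R}.\]
Unconditional stability is therefore equivalent to the purely scalar statement $|R_k(-\beta f_k(i\eta))| \le 1$ for every real $\eta$.

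To close the argument, I would observe that the curve $\Gamma_k := \{-f_k(i\eta):\eta\in\mathbb R\}$ is continuous, passes through the origin at $\eta = 0$, and is bounded (the denominators $1 \pm i\eta$ stay uniformly away from zero on the imaginary axis). Because the absolute stability region of SSP RK$k$ contains an open neighborhood of $0$, the rescaled curve $\beta\Gamma_k$ lies entirely inside that region for all sufficiently small $\beta > 0$, yielding $|R_k(-\beta f_k(i\eta))| \le 1$ uniformly in $\eta$. Defining $\beta_{k,\max}$ as the supremum of such $\beta$ establishes the theorem.

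The qualitative step, existence of $\beta_{k,\max} > 0$, is essentially free from the neighborhood argument above; the genuinely calculational part is extracting the sharp numerical values recorded in Table~\ref{tab0}. Those arise from locating where the real function $\eta\mapsto|R_k(-\beta f_k(i\eta))|^2 - 1$ first touches zero tangentially as $\beta$ is increased, i.e.\ by imposing $|g|=1$ and $\partial_\eta |g|^2 = 0$ simultaneously. For $k=1$ this reduces to an elementary inequality; for $k=2$ one gets a tractable rational function of $\eta^2$; and for $k=3$ the $\mD_0*\mD_L^2$ correction pushes the degree of $f_3$ high enough that a hybrid of analytic simplification and numerical root-finding is the cleanest route. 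I expect this sharp quantitative optimization, rather than the stability reduction itself, to be the principal obstacle.
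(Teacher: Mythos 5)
Your overall strategy -- diagonalize every operator on periodic Fourier modes, observe that the local Lax--Friedrichs flux degenerates to pure upwinding for a linear Hamiltonian, note that the choice $\gamma=\beta/(|c|\Delta t)$ makes the amplification factor $R_k\bigl(-\beta f_k(i\eta)\bigr)$ independent of $\Delta t$ and $\Delta x$, and then optimize over $\beta$ -- is exactly the von Neumann analysis used to establish this theorem in \cite{christlieb2017kernel} (the present paper only cites that proof rather than reproducing it). Your symbols are also correct: $\widehat{\mD_L}=z/(1+z)$, $\widehat{\mD_R}=-z/(1-z)$, and $\widehat{\mD_0}=-z^2/(1-z^2)$ since $\mD_0=\mathcal{I}-(\mathcal{L}_L\mathcal{L}_R)^{-1}$.

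There is, however, one step that is genuinely wrong as written: the claim that ``the absolute stability region of SSP RK$k$ contains an open neighborhood of $0$,'' from which you conclude that existence of a positive $\beta_{k,\max}$ is essentially free. No explicit Runge--Kutta method has this property: $|R_k(w)|=|1+w+\cdots|>1$ whenever $\operatorname{Re}w>0$ and $|w|$ is small, and for $k=1,2$ the stability region meets the imaginary axis only at the origin. Since the curve $-\beta f_k(i\eta)$ emanates from the origin, your neighborhood argument gives nothing; if $f_k(i\eta)$ were purely imaginary (as for a centered difference), forward Euler and SSP RK2 would be unconditionally \emph{unstable} for every $\beta>0$. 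The existence of a positive threshold therefore rests on a property you must actually verify: $\operatorname{Re}f_k(i\eta)>0$ for all $\eta\neq 0$ (the kernel representation is genuinely dissipative at every nonzero frequency; e.g. $\operatorname{Re}f_1(i\eta)=\eta^2/(1+\eta^2)$), together with an order-of-contact comparison at $\eta=0$ between the decay of $\operatorname{Re}\bigl(-\beta f_k(i\eta)\bigr)$ and the growth of $|R_k|$ along the imaginary axis (for $k=2$ one needs $\operatorname{Re}f_2(i\eta)\sim 2\eta^4$ to beat the $O(y^4)$ instability of $|R_2(iy)|^2-1$). Once this is supplied, the rest of your reduction is sound, and the sharp constants $\beta_{k,\max}=2,\,1,\,1.243$ follow from the scalar optimization you describe -- e.g. for $k=1$, $|1-\beta f_1(i\eta)|^2=1-\beta s(2-\beta)$ with $s=\eta^2/(1+\eta^2)$, giving $\beta_{1,\max}=2$ exactly.
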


\begin{table}[htb]
	\caption{\label{tab0}\em $\beta_{k,\max}$ in Theorem \ref{thm4} for  $k=1,\,2,\,3$.}
	\centering
	\vspace{0.3cm}
	\begin{tabular}{| l | p{1cm} | p{1cm} | p{1cm} |}
		\hline
		$k$ &  1  & 2  & 3  \\\hline
		$\beta_{k,max}$  &  2  &  1  &  1.243  \\\hline
	\end{tabular}
\end{table}

\subsection{Non-periodic boundary conditions}
\label{sec:non_periodic}

In this subsection, we will focus on the 1D H-J equation \eqref{eq:1D} with two commonly considered non-periodic boundary conditions, including Dirichlet and  Neumann boundary conditions.
Unlike periodic boundary conditions, it is not straightforward to specify the boundary conditions  for the operators $\mathcal{D}_{*}^{p}$, where $*$ could be $L$, $R$ or $0$. The reason is that boundary conditions imposed on the operators $\mathcal{D}_{*}$ must be consistent with the boundary condition specified on $\phi$ (i.e. Dirichlet boundary condition) or its derivative (i.e. Neumann boundary condition). To address the difficulty, we propose to modify the partial sum \eqref{eq:partialsum_per}.
We will first investigate the operators $\mD_{*}$ and prove a lemma in Subsection \ref{sec:operator}, which is crucial for the scheme development as well as the error analysis.
In Subsection \ref{sec:modify}, under the assumption that we already have all boundary values as needed, e.g., the derivatives of $\phi$ at the boundary, we show that how to specify proper boundary conditions for the operators $\mathcal{D}_{*}$, i.e., the coefficients  $A_{0}$, $B_{0}$, $A_{L}$ and $B_{R}$, such that the partial sum can approximate $\phi_x$ with high order accuracy and the Dirichlet or Neumann boundary condition is satisfied. In the Subsection \ref{sec:ILW}, we will provide a systematic approach to reconstructing those needed boundary values.

\subsubsection{Operators with non-periodic boundary treatment}
\label{sec:operator}

We first study the operator $\mD_{*}$ with a non-periodic boundary condition, where $*$ can be $0$, $L$ and $R$. Suppose $C_{a}$ and $C_{b}$ are given numbers.
\begin{itemize}
	\item If we require
	$$\mD_{L}[v,\gamma](a)=C_{a}, \quad\text{and}\quad \mD_{R}[v,\gamma](b)=C_{b},$$
	then, the coefficients are given as
	\begin{align}
	\label{eq:fx_bc_dir}
	A_{L}=v(a) - C_{a} , \quad \text{and} \quad B_{R}= v(b) - C_{b}.
	\end{align}
	\item If we require
	$$\mD_{0}[v,\gamma](a)=C_{a}, \quad\text{and}\quad \mD_{0}[v,\gamma](b)=C_{b},$$
	then, the coefficients are given as
	\begin{subequations}
		\label{eq:gxx_bc_dir}
		\begin{align}
		& A_{0}=\frac{1}{1-\mu^2}\left( \mu\left(I^{0}[v,\gamma](b)-v(b)+C_{b}\right) - \left(I^{0}[v,\gamma](a)-v(a)+C_{a}\right)\right), \\
		& B_{0}=\frac{1}{1-\mu^2}\left( \mu\left(I^{0}[v,\gamma](a)-v(a)+C_{a}\right) - \left(I^{0}[v,\gamma](b)-v(b)+C_{b}\right)\right).
		\end{align}
	\end{subequations}
\end{itemize}

We start with proving the following lemma, which connects the function $\mD_{*}[v,\gamma](x)$ with the derivatives of the function $v(x)$ as well as their boundary values.

\begin{lem}\label{lem1}
	Consider the boundary treatments \eqref{eq:fx_bc_dir} and \eqref{eq:gxx_bc_dir}.
	\begin{itemize}
		\item Suppose $v\in \mathcal{C}^{k+1}[a,b]$. 
%		If we set the operator $\mD_{L}$ and $\mD_{R}$ with boundary treatments
%		$\mD_{L}[v,\gamma](a) = C_{a}$, and $\mD_{R}[v,\gamma](b) = C_{b}$. 
		Then, we can obtain that
		\begin{subequations}
			\begin{align}
			\mathcal{D}_{L}[v,\gamma](x)
			=& -\sum_{p=1}^{k} \left(-\frac{1}{\gamma}\right)^{p}\left(\partial_{x}^{p}v(x)-\partial_{x}^{p}v(a)e^{-\gamma(x-a)}\right)  - \left(-\frac{1}{\gamma}\right)^{k+1} I^{L}[\partial_{x}^{k+1}v,\gamma](x) + C_{a}e^{-\gamma(x-a)},\\
			\mathcal{D}_{R}[v,\gamma](x)
			=& -\sum_{p=1}^{k} \left(\frac{1}{\gamma}\right)^{p}\left(\partial_{x}^{p}v(x)-\partial_{x}^{p}v(b)e^{-\gamma(b-x)}\right)  -\left(\frac{1}{\gamma}\right)^{k+1}I^{R}[\partial_{x}^{k+1}v,\gamma](x) + C_{b}e^{-\gamma(b-x)}.
			\end{align}
		\end{subequations}
		
		\item Suppose $v\in \mathcal{C}^{2k+2}[a,b]$. 
%		If we set $\mD_{0}$ with treatment $\mD_{0}[v,\gamma](a)=C_{a}$, and $\mD_{0}[v,\gamma](b)=C_{b}$.
		Then, we have
		\begin{align}
		\mathcal{D}_{0}[v,\gamma](x)
		=& -\sum_{p=1}^{k} \left(\frac{1}{\gamma}\right)^{2p} \left( \partial_{x}^{2p}v(x) +\frac{\mu \partial_{x}^{2p}v(b) - \partial_{x}^{2p}v(a)}{1-\mu^2} e^{-\gamma(x-a)}  +\frac{\mu \partial_{x}^{2p}v(a) - \partial_{x}^{2p}v(b)}{1-\mu^2} e^{-\gamma(b-x)} \right) \nonumber\\
		& -\frac{\mu C_{b} - C_{a}}{1-\mu^2} e^{-\gamma(x-a)} -\frac{\mu C_{a} - C_{b}}{1-\mu^2} e^{-\gamma(b-x)} 
		- \left(\frac{1}{\gamma}\right)^{2k+2} \left( I^{0}[\partial_{x}^{2k+2}v,\gamma](x) \right. \nonumber\\
		& \left. 
		+ \frac{\mu I^{0}[\partial_{x}^{2k+2}v,\gamma](b) - I^{0}[\partial_{x}^{2k+2}v,\gamma](a)}{1-\mu^2} e^{-\gamma(x-a)}  
		+ \frac{\mu I^{0}[\partial_{x}^{2k+2}v,\gamma](a) - I^{0}[\partial_{x}^{2k+2}v,\gamma](b)}{1-\mu^2} e^{-\gamma(b-x)} \right).
		\end{align}		
	\end{itemize}
\end{lem}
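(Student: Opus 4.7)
The plan is to derive each identity by iterating integration by parts on the exponential convolution kernels in $I^L$, $I^R$, and $I^0$, and then fixing the coefficients $A_L$, $B_R$, $A_0$, $B_0$ from \eqref{eq:fx_bc_dir}--\eqref{eq:gxx_bc_dir}. The basic one-step identities, obtained by a single integration by parts in $y$, are
\[
I^L[v,\gamma](x) = v(x) - v(a)e^{-\gamma(x-a)} - \frac{1}{\gamma} I^L[v',\gamma](x),
\]
\[
I^R[v,\gamma](x) = v(x) - v(b)e^{-\gamma(b-x)} + \frac{1}{\gamma} I^R[v',\gamma](x).
\]
These are the engine of the whole proof: each application trades a power of $1/\gamma$ for one derivative of $v$ together with a pair of boundary exponential contributions.

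For $\mathcal{D}_L$, I would start from $\mathcal{D}_L[v,\gamma](x) = v(x) - I^L[v,\gamma](x) - A_L e^{-\gamma(x-a)}$, apply the $I^L$ recurrence once to eliminate the $v(x)$ term, and absorb $A_L = v(a) - C_a$ from \eqref{eq:fx_bc_dir}. This reduces the expression to $\mathcal{D}_L[v,\gamma](x) = C_a e^{-\gamma(x-a)} + \frac{1}{\gamma} I^L[v',\gamma](x)$. The stated formula then follows by induction on $k$: assuming it holds at level $k-1$, I apply the recurrence once more to $I^L[\partial_x^{k} v,\gamma](x)$, which contributes the $p=k$ boundary term $\partial_x^k v(x) - \partial_x^k v(a) e^{-\gamma(x-a)}$ with the correct sign and leaves the $(k+1)$-th integral remainder. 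The signs $-(-1/\gamma)^p$ appearing in the statement are precisely the signs generated by this telescoping. The argument for $\mathcal{D}_R$ is symmetric; the sign flip in the $I^R$ recurrence produces positive $(1/\gamma)^p$ coefficients and boundary values evaluated at $b$.

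For $\mathcal{D}_0$, the symmetric kernel splits as $I^0[v,\gamma](x) = \tfrac{1}{2}\bigl(I^L[v,\gamma](x) + I^R[v,\gamma](x)\bigr)$, so
\[
\mathcal{D}_0[v,\gamma](x) = v(x) - \tfrac{1}{2}\bigl(I^L[v,\gamma](x) + I^R[v,\gamma](x)\bigr) - A_0 e^{-\gamma(x-a)} - B_0 e^{-\gamma(b-x)}.
\]
Iterating the recurrences now produces an alternating pattern: after an odd number of IBP steps the two remainders enter as a difference $I^R[\partial_x^m v,\gamma] - I^L[\partial_x^m v,\gamma]$, and after an even number as a sum, which is again $2 I^0[\partial_x^m v,\gamma]$. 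This forces only even-order derivatives $\partial_x^{2p} v$ to appear in the explicit part of the expansion. I would therefore perform IBP in pairs, tracking at each pair the boundary contributions $\partial_x^{2p} v(a), \partial_x^{2p} v(b)$ that split off with coefficient $(1/\gamma)^{2p}$. After $k+1$ such pairs, the remainder is $\tfrac{1}{\gamma^{2k+2}}$ times the $\mathcal{D}_0$-type structure built from $I^0[\partial_x^{2k+2} v,\gamma]$, which is exactly what appears in the statement. Finally, substituting \eqref{eq:gxx_bc_dir} for $A_0, B_0$ and regrouping the boundary exponentials yields the canonical coefficients $(\mu \partial_x^{2p} v(b) - \partial_x^{2p} v(a))/(1-\mu^2)$ and their mirror images, while the constants $C_a, C_b$ produce the separate inhomogeneous contribution $-\tfrac{\mu C_b - C_a}{1-\mu^2}e^{-\gamma(x-a)} - \tfrac{\mu C_a - C_b}{1-\mu^2}e^{-\gamma(b-x)}$.

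The main obstacle is the bookkeeping in the $\mathcal{D}_0$ case: maintaining the alternating $I^L \pm I^R$ pattern across $2(k+1)$ integration-by-parts steps, verifying that the odd-order boundary values $\partial_x^{2p-1} v(a), \partial_x^{2p-1} v(b)$ cancel within each pair, and recognizing the final remainder as the natural $I^0$-form of $\partial_x^{2k+2} v$ augmented by its $A_0$/$B_0$-style correction. Once I establish as an invariant that the boundary exponentials always appear packaged with the $1/(1-\mu^2)$ denominator in the same $2\times 2$ structure that defines $A_0$ and $B_0$, the induction step reduces to two integrations by parts plus a routine regrouping, mirroring the linear-algebraic derivation of the boundary coefficients themselves.
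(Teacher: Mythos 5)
Your route for $\mD_{L}$ and $\mD_{R}$ is the same as the paper's: iterate integration by parts on $I^{L}$ (resp.\ $I^{R}$) and absorb $A_{L}=v(a)-C_{a}$ (resp.\ $B_R = v(b)-C_b$); the paper writes the full $k$-fold expansion of $I^{L}$ in one line and substitutes, while you telescope one step at a time by induction, which is the identical computation. The paper proves only the $\mD_{L}$ case and declares the rest analogous, so your $\mD_{0}$ sketch goes beyond what is recorded; it is sound in outline, but one claim in it is false as stated: the odd-order boundary values do \emph{not} cancel within each pair of integrations by parts. Two steps of your own recurrences give
\[
I^{0}[v,\gamma](x)=v(x)-\tfrac12 v(a)e^{-\gamma(x-a)}-\tfrac12 v(b)e^{-\gamma(b-x)}+\tfrac{1}{2\gamma}\bigl(v'(a)e^{-\gamma(x-a)}-v'(b)e^{-\gamma(b-x)}\bigr)+\tfrac{1}{\gamma^{2}}I^{0}[v'',\gamma](x),
\]
so $v'(a)$ and $v'(b)$ survive in the raw expansion. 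What saves the argument is that your correct observation about the interior terms (odd steps produce $I^{R}-I^{L}$, whose non-exponential parts cancel) already yields
\[
v(x)-I^{0}[v,\gamma](x)=-\sum_{p=1}^{k}\gamma^{-2p}\partial_{x}^{2p}v(x)-\gamma^{-2k-2}I^{0}[\partial_{x}^{2k+2}v,\gamma](x)+\alpha e^{-\gamma(x-a)}+\beta e^{-\gamma(b-x)}
\]
for \emph{some} constants $\alpha,\beta$ that you never need to compute. Subtracting $A_{0}e^{-\gamma(x-a)}+B_{0}e^{-\gamma(b-x)}$ keeps the correction inside the same two-dimensional span, and the two conditions $\mD_{0}[v,\gamma](a)=C_{a}$, $\mD_{0}[v,\gamma](b)=C_{b}$ form an invertible $2\times 2$ system (matrix $\bigl(\begin{smallmatrix}1&\mu\\ \mu&1\end{smallmatrix}\bigr)$, $\mu<1$) that pins the exponential part down uniquely; one then only has to check that the formula in the lemma meets those two conditions, which it does because each bracketed group vanishes at both endpoints while the $C$-group takes the values $C_{a}$ and $C_{b}$. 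Replace the ``pairwise cancellation'' step by this uniqueness argument and your proof goes through.
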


\begin{proof}
	For brevity, we only show the details of the proof for operator $\mD_{L}$. Following a similar argument, one can easily prove the other cases.
	
	Using integration by parts repeatedly, we have
	\begin{align*}
	I^{L}[v,\gamma](x)
	%=& v(y)e^{-\gamma(x-y)}|_{y=a}^{y=x}-\int_{a}^{x}e^{-\gamma(x-y)}v'(y)dy\\
	=& v(x)-v(a)e^{-\gamma(x-a)} - \frac{1}{\gamma}I^{L}[v'(y),\gamma](x) = \cdots \\
	=& \sum_{p=0}^{k}\left(-\frac{1}{\gamma}\right)^{p} \left( \partial_{x}^{p}v(x)-\partial_{x}^{p}v(a)e^{-\gamma(x-a)}\right) +\left(-\frac{1}{\gamma}\right)^{k+1} I^{L}[\partial_{x}^{k+1}v(y),\gamma](x).
	\end{align*}	
	Therefore, with the coefficient $A_{L}=v(a) - C_{a}$, $\mD_{L}[v,\gamma](x)$ can be rewritten as 
	\begin{align*}
	\mD_{L}[v,\gamma](x) 
	=&  \sum_{p=1}^{k}\left(-\frac{1}{\gamma}\right)^{p} \left( \partial_{x}^{p}v(x)-\partial_{x}^{p}v(a)e^{-\gamma(x-a)}\right) +\left(-\frac{1}{\gamma}\right)^{k+1} I^{L}[\partial_{x}^{k+1}v(y),\gamma](x) 
	+ C_{a} e^{-\gamma(x-a)}.
	\end{align*}
\end{proof}

In light of Lemma \ref{lem1}, it is easy to check that given the boundary values
$\phi_{x}(a)$ and $\phi_{x}(b)$ from the boundary condition,
 $\phi^{\pm}_{x}$ can be approximated by
\begin{align}
\phi^{-}_{x}\approx \gamma \mD_{L}[\phi,\gamma](x) , \quad
\phi^{+}_{x}\approx-\gamma \mD_{R}[\phi,\gamma](x) ,
\end{align}
with order $\mO(1/\gamma)$ (i.e., first order accuracy since we choose $\gamma=\mathcal{O}(1/\Delta t)$), by requiring
\begin{align}
\label{eq:bc_test}
\gamma \mD_{L}(a) = \phi_{x}(a), \quad \text{and} \quad 
-\gamma \mD_{R}(a) = \phi_{x}(b).
\end{align}

It seems that for the higher order scheme given in \eqref{eq:partialsum_per}, the boundary condition for $\phi$ can be satisfied based on \eqref{eq:bc_test} together with $\gamma\mathcal{D}^{p}_{L}[\phi,\gamma](a)=\gamma\mathcal{D}_{R}^{p}[\phi,\gamma](b)= 0$, for $p\geq2$ and $\gamma\mD_{0}*\mathcal{D}^{2}_{L}[\phi,\gamma](a) = \gamma\mD_{0}*\mathcal{D}_{R}^{2}[\phi,\gamma](b)=0$ if $k=3$. However, due to the extra boundary terms appear in Lemma \ref{lem1}, one can check that the scheme \eqref{eq:partialsum_per} is not high order accurate. In fact, it is only first order in the case of non-periodic boundary conditions. To circumvent this problem, we will use a modified scheme instead of \eqref{eq:partialsum_per}.

\subsubsection{The modified partial sum}
\label{sec:modify}

Suppose we have obtained by some means the derivative values at boundary, i.e., $\partial_{x}^{m}\phi(a)$ and $\partial_{x}^{m}\phi(b)$, $m\geq1$.
 %In Subsection \ref{sec:ILW}, we will introduce an approach that systematically generate these boundary derivative values. 
 We propose the following modified partial sums for $k\leq3$ to handle non-periodic boundary conditions 
\begin{subequations}
	\label{eq:partialsum_dir}
	\begin{align}
	\phi_{x}^{-}(x)\approx\widetilde{\mP}^{L}_{k}[\phi,\gamma](x)=\left\{\begin{array}{ll}
	\gamma\sum\limits_{p=1}^{k}\mathcal{D}_{L}[\phi_{1,p},\gamma](x), & k=1,\, 2,\\
	\gamma\sum\limits_{p=1}^{k}\mathcal{D}_{L}[\phi_{1,p},\gamma](x) -\gamma \mD_{0}[\phi_{1,3},\gamma](x), & k=3,\\
	\end{array}
	\right.
	\end{align}
	\begin{align}
	\phi_{x}^{+}(x)\approx\widetilde{\mP}^{R}_{k}[\phi,\gamma](x)=\left\{\begin{array}{ll} -\gamma\sum\limits_{p=1}^{k}\mathcal{D}_{R}[\phi_{2,p},\gamma](x),& k=1,\, 2,\\
	-\gamma\sum\limits_{p=1}^{k}\mathcal{D}_{R}[\phi_{2,p},\gamma](x) +\gamma \mD_{0}[\phi_{2,3},\gamma](x), & k=3.\\
	\end{array}
	\right.
	\end{align}
\end{subequations}
And $\phi_{1,p}$ and $\phi_{2,p}$ are given as
\begin{subequations}
	\label{eq:expression}
	\begin{align}
	& \left\{\begin{array}{ll}
	\phi_{1,1}=\phi,\\
	\displaystyle \phi_{1,2}=\mathcal{D}_{L}[\phi_{1,1},\gamma] - \sum_{m=2}^{k}\left(-\frac{1}{\gamma}\right)^{m} \partial_{x}^{m}\phi(a) e^{-\gamma(x-a)},\\
	\displaystyle \phi_{1,3}=\mathcal{D}_{L}[\phi_{1,2},\gamma] + \sum_{m=2}^{k}(m-1)\left(-\frac{1}{\gamma}\right)^{m} \partial_{x}^{m}\phi(a) e^{-\gamma(x-a)},\\
	\end{array}
	\right.\\
	& \left\{\begin{array}{ll}
	\phi_{2,1}=\phi, \\
	\displaystyle \phi_{2,2}=\mathcal{D}_{R}[\phi_{2,1},\gamma] - \sum_{m=2}^{k}\left(\frac{1}{\gamma}\right)^{m} \partial_{x}^{m}\phi(b) e^{-\gamma(b-x)}, \\
	\displaystyle \phi_{2,3}=\mathcal{D}_{R}[\phi_{2,2},\gamma] + \sum_{m=2}^{k}(m-1)\left(\frac{1}{\gamma}\right)^{m} \partial_{x}^{m}\phi(b) e^{-\gamma(b-x)},\\
	\end{array}
	\right.
	\end{align}
\end{subequations}
with the boundary conditions for the operators
	\begin{align*}
	& \gamma\mathcal{D}_{L}[\phi_{1,1},\gamma](a)=\phi_{x}(a), \quad
	\gamma\mathcal{D}_{R}[\phi_{2,1},\gamma](b)=-\phi_{x}(b),\\
	& \gamma\mathcal{D}_{L}[\phi_{1,p},\gamma](a)=\gamma\mathcal{D}_{R}[\phi_{2,p},\gamma](b)= 0,
	\quad \text{for} \ p\geq2,\\
	& \gamma\mD_{0}[\phi_{*,3},\gamma](a) = \gamma\mD_{0}[\phi_{*,3},\gamma](b)=0, \quad \text{$*$ could be 1 or 2.}
	\end{align*} 
One can easily check that the modified partial sum \eqref{eq:partialsum_dir} agrees with the derivative values at the boundary, i.e.,
$$
\widetilde{\mP}^{L}_{k}[\phi,\gamma](a) = \phi_x(a),\quad \widetilde{\mP}^{R}_{k}[\phi,\gamma](b) = \phi_x(b),
$$
meaning that the modified partial sum approximation \eqref{eq:partialsum_dir} is consistent with the boundary condition imposed on $\phi$.
 Moreover, we have the following  theorem:
\begin{thm}
	Suppose $\phi\in\mathcal{C}^{k+1}[a,b]$, $k=1, \, 2,\, 3$. Then, the modified partial sums \eqref{eq:partialsum_dir} satisfy
	\begin{subequations}
		\begin{align}
		& \|\partial_{x}\phi(x)-\widetilde{\mP}^{L}_{k}[\phi,\gamma](x)\|_{\infty}\leq C \left(\frac{1}{\gamma}\right)^{k} \|\partial_{x}^{k+1}\phi(x)\|_{\infty},\\
		& \|\partial_{x}\phi(x)-\widetilde{\mP}^{R}_{k}[\phi,\gamma](x)\|_{\infty}\leq C \left(\frac{1}{\gamma}\right)^{k} \|\partial_{x}^{k+1}\phi(x)\|_{\infty}.
		\end{align}
	\end{subequations}
\end{thm}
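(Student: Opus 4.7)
The strategy is to apply Lemma~\ref{lem1} to each term $\mathcal{D}_L[\phi_{1,p},\gamma](x)$ (and to $\mathcal{D}_0[\phi_{1,3},\gamma](x)$ when $k=3$) appearing in the modified partial sum \eqref{eq:partialsum_dir}, substitute the resulting asymptotic expansions, and verify that the correction terms subtracted in the recursive definition \eqref{eq:expression} of $\phi_{1,p}$ exactly cancel the $e^{-\gamma(x-a)}$ boundary exponentials that would otherwise spoil the order. Only an integral remainder involving $\partial_x^{k+1}\phi$ (or $\partial_x^{2k+2}\phi$ from the $\mathcal{D}_0$ piece) will survive, and its $L^{\infty}$ bound follows immediately from $\gamma\int_a^x e^{-\gamma(x-y)}\,dy\le 1$ together with the analogous bounds for $I^R[\cdot,\gamma]$ and $I^0[\cdot,\gamma]$.

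The base case $k=1$ is direct: Lemma~\ref{lem1} combined with the prescribed value $\gamma\mathcal{D}_L[\phi,\gamma](a)=\phi_x(a)$ (so that $\gamma C_a\,e^{-\gamma(x-a)}=\phi_x(a)\,e^{-\gamma(x-a)}$) already gives $\widetilde{\mP}^L_1[\phi,\gamma](x)-\partial_x\phi(x) = -(1/\gamma)\,I^L[\partial_x^2\phi,\gamma](x)$, which is bounded by $(1/\gamma)\|\partial_x^2\phi\|_\infty$. For $k=2$, I would first apply Lemma~\ref{lem1} to $\gamma\mathcal{D}_L[\phi,\gamma]$ to expose the unwanted term $(1/\gamma)\partial_x^2\phi(a)\,e^{-\gamma(x-a)}$; then evaluate $\partial_x\phi_{1,2}(a)$ using \eqref{eq:expression} and the derivative of the expansion from Lemma~\ref{lem1}, verifying that the coefficient $-(-1/\gamma)^2\partial_x^2\phi(a)$ built into $\phi_{1,2}$ is exactly what makes the corresponding $-\partial_x\phi_{1,2}(a)\,e^{-\gamma(x-a)}$ contribution in $\gamma\mathcal{D}_L[\phi_{1,2},\gamma]$ absorb the spurious boundary term, leaving a remainder of order $1/\gamma^2$.

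For $k=3$, the same telescoping applies to the two $\mathcal{D}_L[\phi_{1,p},\gamma]$ pieces with two layers of correction. The additional $-\gamma\mathcal{D}_0[\phi_{1,3},\gamma]$ term is then analyzed via the $\mathcal{D}_0$ identity in Lemma~\ref{lem1}: since $\phi_{1,3}$ is of leading size $\mathcal{O}(1/\gamma^2)$, the $\mathcal{O}(1/\gamma^2)$ expansion of $\gamma\mathcal{D}_0[\phi_{1,3},\gamma]$ contributes only at order $1/\gamma^4$, beyond the claimed threshold, while the prescribed conditions $\gamma\mathcal{D}_0[\phi_{1,3},\gamma](a)=\gamma\mathcal{D}_0[\phi_{1,3},\gamma](b)=0$ force $A_0,B_0$ to balance the $e^{-\gamma(x-a)}$ and $e^{-\gamma(b-x)}$ contributions in that expansion.

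The main obstacle is bookkeeping: one must track the coefficient $(m-1)(-1/\gamma)^m\partial_x^m\phi(a)$ appearing in $\phi_{1,3}$ and verify it is precisely the value produced when the first-layer correction $(-1/\gamma)^m\partial_x^m\phi(a)\,e^{-\gamma(x-a)}$ is propagated through a second application of $\mathcal{D}_L$ and then evaluated at $x=a$ (the factor $m-1$ originates from differentiating $e^{-\gamma(x-a)}$ against the $(1/\gamma)^m$ weights, which converts one power of $\gamma$ into an integer multiplier). Once this algebraic identity is checked at each level, the theorem reduces to the elementary bound on $I^L[\partial_x^{k+1}\phi,\gamma]$. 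The estimate for $\widetilde{\mP}^R_k$ is entirely analogous under the symmetry $a\leftrightarrow b$, $e^{-\gamma(x-a)}\leftrightarrow e^{-\gamma(b-x)}$, $\mathcal{D}_L\leftrightarrow\mathcal{D}_R$, and $\phi_{1,p}\leftrightarrow\phi_{2,p}$.
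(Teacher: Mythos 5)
Your plan is correct and follows essentially the same route as the paper's proof: apply Lemma \ref{lem1} to each $\mathcal{D}_{L}[\phi_{1,p},\gamma]$, observe that the boundary-exponential corrections built into the definitions of $\phi_{1,2}$ and $\phi_{1,3}$ telescope away the spurious $e^{-\gamma(x-a)}$ terms, and bound the surviving remainders via $\|I^{L}[\partial_{x}^{k+1}\phi,\gamma]\|_{\infty}\leq\|\partial_{x}^{k+1}\phi\|_{\infty}$. The paper carries out exactly this computation explicitly for $k=2$ (showing $\gamma\left(\mathcal{D}_{L}[\phi_{1,1},\gamma]+\mathcal{D}_{L}[\phi_{1,2},\gamma]\right)=\phi_{x}+\mathcal{O}(\gamma^{-2})$, with the factor $p-1$ arising from collapsing the double sum rather than from differentiating the exponential as you suggest) and, like you, dispatches $k=3$, the $\mathcal{D}_{0}$ term, and the $R$-side by the analogous argument and symmetry.
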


\begin{proof}
	With the help of Lemma \ref{lem1}, we have 
	\begin{align*}
	\mD_{L}[\phi_{1,1},\gamma](x)=\frac{1}{\gamma} \phi_{x} -\sum_{p=2}^{k} \left(-\frac{1}{\gamma}\right)^{p}\left(\partial_{x}^{p}\phi(x)-\partial_{x}^{p}\phi(a)e^{-\gamma(x-a)}\right)  - \left(-\frac{1}{\gamma}\right)^{k+1} I^{L}[\partial_{x}^{k+1}\phi,\gamma](x).
	\end{align*}
	Thus,
	\begin{align*}
	\phi_{1,2}(x) =  -\sum_{p=1}^{k} \left(-\frac{1}{\gamma}\right)^{p} \partial_{x}^{p}\phi(x)   - \left(-\frac{1}{\gamma}\right)^{k+1} I^{L}[\partial_{x}^{k+1}\phi,\gamma](x).
	\end{align*}
	Using the fact the $\partial_{x} I^{L}[v,\gamma](x)=\gamma v(x)-\gamma I^{L}[v,\gamma](x)$, we have the general form for $0\leq q\leq k$
	\begin{align*}
	\partial_{x}^{q}I^{L}[v,\gamma](x) 
	=& -\sum_{p=0}^{q-1}(-\gamma)^{q-p}\partial_{x}^p v(x)  + (-\gamma)^{q} I^{L}[v,\gamma](x).
%	=& -\sum_{p=0}^{q-1}(-\gamma)^{q-p}\partial_{x}^p v(x)  + (-\gamma)^{q} \left(\sum_{p=0}^{k}\left(-\frac{1}{\gamma}\right)^{p} \left( \partial_{x}^{p}v(x)-\partial_{x}^{p}v(a)e^{-\gamma(x-a)}\right) \right.\\
%	& \left. +\left(-\frac{1}{\gamma}\right)^{k+1} I^{L}[\partial_{x}^{k+1}v(y),\gamma](x)\right) \\
%	=& -\sum_{p=q}^{k}\left(-\frac{1}{\gamma}\right)^{p-q}\partial_{x}^p v(x) - \sum_{p=0}^{k}\left(-\frac{1}{\gamma}\right)^{p-q} \partial_{x}^{p}v(a)e^{-\gamma(x-a)}\\ 
%	& +\left(-\frac{1}{\gamma}\right)^{k+1-q} I^{L}[\partial_{x}^{k+1}v(y),\gamma](x)
		\end{align*}
	Thus, we have 	
	\begin{align*}
	\left(-\frac{1}{\gamma}\right)^{q}\partial_{x}^{q}\phi_{1,2}(x) 
	=& \left(-\frac{1}{\gamma}\right)^{q} \partial_{x}^{q} \left(  -\sum_{p=1}^{k} \left(-\frac{1}{\gamma}\right)^{p} \partial_{x}^{p}\phi(x) -\left(-\frac{1}{\gamma}\right)^{k+1} I^{L}[\partial_{x}^{k+1}\phi,\gamma](x)\right) \\
	=& -\sum_{p=q+1}^{q+k} \left(-\frac{1}{\gamma}\right)^{p} \partial_{x}^{p}(x)
	-\left(-\frac{1}{\gamma}\right)^{k+1+q} \left(  -\sum_{p=0}^{q-1}(-\gamma)^{q-p}\partial_{x}^p \partial_{x}^{k+1}\phi(x)  + (-\gamma)^{q} I^{L}[\partial_{x}^{k+1}\phi,\gamma](x) \right)\\
	=& -\sum_{p=q+1}^{q+k} \left(-\frac{1}{\gamma}\right)^{p} \partial_{x}^{p}(x)
	 +\sum_{p=k+1}^{k+q}\left(-\frac{1}{\gamma}\right)^{p}  \partial_{x}^{p}\phi(x)  -\left(-\frac{1}{\gamma}\right)^{k+1}  I^{L}[\partial_{x}^{k+1}\phi,\gamma](x) \\
	 =& -\sum_{p=q+1}^{k} \left(-\frac{1}{\gamma}\right)^{p} \partial_{x}^{p}(x) -\left(-\frac{1}{\gamma}\right)^{k+1}  I^{L}[\partial_{x}^{k+1}\phi,\gamma](x).
	\end{align*}
	Furthermore, we apply $\mD_{L}$ on $\phi_{1,2}$ with $\mD_{L}(a)=0$, and obtain
	\begin{align*}
	\mD_{L}[\phi_{1,2},\gamma] (x)
	=& -\sum_{q=1}^{k-1} \left(-\frac{1}{\gamma}\right)^{q}\left(\partial_{x}^{q}\phi_{1,2}(x)-\partial_{x}^{q}\phi_{1,2}(a)e^{-\gamma(x-a)}\right)  -\left(-\frac{1}{\gamma}\right)^{k} I^{L}[\partial_{x}^{k}\phi_{1,2},\gamma](x) \\
	=& -\sum_{q=1}^{k-1} \left( -\sum_{p=q+1}^{k} \left(-\frac{1}{\gamma}\right)^{p} \partial_{x}^{p}\phi(x) -\left(-\frac{1}{\gamma}\right)^{k+1}  I^{L}[\partial_{x}^{k+1}\phi,\gamma](x) + \sum_{p=q+1}^{k} \left(-\frac{1}{\gamma}\right)^{p} \partial_{x}^{p}\phi(a) e^{-\gamma(x-a)} \right) \\
	& +  \left(-\frac{1}{\gamma}\right)^{k+1} \left(I^{L}\right)^2 [\partial_{x}^{k+1}\phi,\gamma](x)\\
	=& 	\sum_{p=2}^{k} (p-1)\left(-\frac{1}{\gamma}\right)^{p} \left(\partial_{x}^{p}\phi(x)  - \partial_{x}^{p}\phi(a) e^{-\gamma(x-a)} \right) + (k-2) \left(-\frac{1}{\gamma}\right)^{k+1}  I^{L}[\partial_{x}^{k+1}\phi,\gamma](x) \\
	& +  \left(-\frac{1}{\gamma}\right)^{k+1} \left(I^{L}\right)^2 [\partial_{x}^{k+1}\phi,\gamma](x).
	\end{align*}
	Therefore,
	\begin{align*}
	 \mD_{L}[\phi_{1,1},\gamma] (x) + \mD_{L}[\phi_{1,2},\gamma] (x)
	=& \frac{1}{\gamma} \phi_{x}(x) + \sum_{p=3}^{k} (p-2)\left(-\frac{1}{\gamma}\right)^{p} \left(\partial_{x}^{p}\phi(x)  - \partial_{x}^{p}\phi(a) e^{-\gamma(x-a)} \right) \\
	& + (k-3) \left(-\frac{1}{\gamma}\right)^{k+1}  I^{L}[\partial_{x}^{k+1}\phi,\gamma](x) 
	+  \left(-\frac{1}{\gamma}\right)^{k+1} \left(I^{L}\right)^2 [\partial_{x}^{k+1}\phi,\gamma](x),
	\end{align*}
	which proves the case of $k=2$ for the approximation of $\phi^{-}_{x}$.
	
	Following a similar argument, one can easily prove the case of $k=3$. With little modification, the proof holds for $\phi^{+}_{x}$ as well.
\end{proof}

\subsubsection{Boundary values reconstruction}\label{sec:ILW}

We have shown that the derivatives $\partial_{x}^{m}\phi(a)$ and $\partial_{x}^{m}\phi(b)$, $m\geq1$, are needed in \eqref{eq:expression} when dealing with the non-periodic boundary conditions. For an outflow boundary condition in the sense of characteristics propagation, it is natural to employ high order extrapolation to obtain the boundary values. For an inflow boundary condition, it becomes more complicated. In \cite{huang2008numerical,xiong2010fast}, the ILW boundary treatment methodology was designed to reconstruct accurate solution values at the ghost points near the boundary for a class of fast sweeping finite difference WENO schemes that solves  the static H-J equations. Over the past several years, ILW has experienced systematic development in various settings, see \cite{tan2010inverse,tan2012efficient}.
Here, we will apply this methodology to obtain the boundary values of solution derivatives at an inflow boundary. The main procedure of ILW  is to convert the spatial derivatives into the time derivatives at an inflow boundary by repeatedly utilizing the underlying PDE. 

To illustrate the main idea, we consider the left boundary $x = a$ for example:
\begin{itemize}
	\item If $x=a$ is an \textbf{outflow} boundary of the domain, where no physical boundary condition should be given, then we  obtain all the derivatives $\partial_{x}^{m}\phi(a)$, $m\geq 1$, through  extrapolation with a suitable order of accuracy.

	\item If $x=a$ is an \textbf{inflow} boundary of the domain, with a \textbf{Dirichlet} boundary condition $\phi(a,t)=f(t)$. Then, the following ILW procedure is applied:
	\begin{enumerate}
		\item To obtain $\phi_{x}(a)$, we first note that
		$$\phi_t(a,t)=f'(t).$$
	    We evaluate \eqref{eq:1D} at $x=a$ and obtain 
		\begin{align}
		\label{eq:ILW1}
		f'(t)+H(\phi_{x}(a))=0.
		\end{align} 
		Then $\phi_{x}(a)$ can be solved from \eqref{eq:ILW1}. Note that there might be more than one root. In this case we should choose the root $\phi_x(a)$ so that
		\begin{align}
		\label{eq:ILW_condition}
		H'(\phi_{x}(a))>0,
		\end{align}
		which guarantees that the boundary $x=a$ is indeed an inflow boundary. Moreover, if the condition \eqref{eq:ILW_condition} still cannot pin down a root, then we would choose the root which is closest to the value obtained through extrapolation.
		
		\item To obtain $\phi_{xx}(a)$, we first take the derivative with respect to $t$ in the original H-J equation \eqref{eq:1D}, yielding
		\begin{align}
		\label{eq:ILW2}
		\phi_{tt}=-H(\phi_{x})_{t}
		=-H'(\phi_{x})\phi_{xt}
		=-H'(\phi_{x}) \left(-H(\phi_{x})\right)_{x}=\left(H'(\phi_{x})\right)^{2} \phi_{xx}.
		\end{align}
		Note that at $x=a$, $\phi_{tt}=f''(t)$ and $\phi_{x}(a)$ is available from the last step, we then solve this equation  for $\phi_{xx}(a)$; that is
		\begin{equation}
		\phi_{xx}(a) = \frac{f''(t)}{ H'(\phi_{x}(a))^{2}}.
		\end{equation}
		
		\item To obtain $\phi_{xxx}(a)$, we further differentiate \eqref{eq:ILW2} with respect to $t$ and make use of the original H-J equation \eqref{eq:1D} again, we then have
		\begin{align}
		\label{eq:ILW3}
		\phi_{ttt}=-3H''(\phi_{x}) \left(H'(\phi_{x})\right)^{2} \phi_{xx}^{2} -\left(H'(\phi_{x})\right)^{3} \phi_{xxx}.
		\end{align}
		 At $x=a$, note that $\phi_{ttt}=f'''(t)$, then $\phi_{xxx}(a)$ is the only unknown quantity and can be easily solved
		 \begin{equation}
		 \phi_{xxx}(a) =- \frac{f'''(t)+3H''(\phi_{x}(a)) \left(H'(\phi_{x}(a))\right)^{2} \phi_{xx}(a)^{2}}{H'(\phi_{x}(a))^{3}}.
		 \end{equation}
		
		\item Repeating this procedure, we can compute any order derivative at $x=a$ as needed. 
	\end{enumerate}

	\item If $x=a$ is an \textbf{inflow} boundary of the domain with a \textbf{Neumann} boundary condition  $\phi_{x}(a,t)=g(t)$, the same ILW procedure can be applied, except that we start from solving the second derivative $\phi_{xx}(a)$.
\end{itemize}

Similarly, the ILW procedure for the right boundary $x=b$ can be developed. Note that in this case, the inflow condition \eqref{eq:ILW_condition} should be changed into 
\begin{align}
H'(\phi_{x}(b))<0.
\end{align}

\subsection{Space Discretization }
In this subsection, we present the details about the spatial discretization of $\mathcal{D}_{L}$ and $\mathcal{D}_{R}$.
We denote $L^{*}[v,\gamma](x_{i})$ as $L^{*}_{i}$ on each grid point $x_{i}$, where $*$ can be $L$ and $R$. Note that 
\begin{subequations}
	\label{eq:recursive}
	\begin{align}
	& I^L_i = e^{-\gamma\Delta x_{i}} I^L_{i-1} + J^L_i,\quad i=1,\ldots,N, \quad I^L_0 = 0, \\
	& I^R_i = e^{-\gamma\Delta x_{i+1}} I^R_{i+1} + J^R_i,\quad i=0,\ldots,N-1, \quad I^R_N = 0,
	\end{align}
\end{subequations}
where,
\begin{align}
\label{eq:JLR}
J^L_{i} =  \gamma \int_{x_{i-1}}^{x_{i}} v(y)e^{-\gamma (x_{i}-y)}dy,\ \ \ \
J^R_{i} =  \gamma \int_{x_{i}}^{x_{i+1}} v(y)e^{-\gamma (y-x_{i})}dy.
\end{align}
Therefore, once  $J^{L}_{i}$ and $J^{R}_{i}$ are computed for all $i$, we then can obtain $I^{L}_{i}$ and $I^{R}_{i}$ via the recursive relation \eqref{eq:recursive}.

In \cite{christlieb2017kernel}, we proposed a high order and robust methodology to compute $J^L_{i}$ and $J^R_{i}$. In what follows we briefly review the underlying procedure for completeness of the paper. 

Note that, to approximate $J^L_{i}$ with $k^{th}$ order accuracy, we may choose the interpolation stencil 
$S(i)=\left\{x_{i-r},\ldots,x_{i-r+k} \right\}$,
which contains $x_{i-1}$ and $x_{i}$. There is a unique polynomial $p(x)$ of degree at most $k$ that interpolates $v(x)$ at the nodes in $S(i)$. Then $J^L_{i}$ is approximated by
\begin{align}
\label{eq:linear_JL}
J^L_{i}\approx\gamma \int_{x_{i-1}}^{x_{i}} p(y)e^{-\gamma (x_{i}-y)}dy.
\end{align}
Note that, the integral on the right hand side can be evaluated exactly.
%{\color{red} Usually, we choose the stencil symmetric about $\left\{x_{i-1},x_{i}\right\}$ or one point biased stencil to ensure stability.
%In particular, when the cell $[x_{i-1},x_{i}]$ near the boundary, we can choose the stencil embedded in the domain. Thus this scheme can avoid finding the values on ghost points when treating complicated geometry.}
Similarly, we can approximate $J^R_{i}$ by
\begin{align}
\label{eq:linear_JR}
J^R_{i}\approx\gamma \int_{x_{i}}^{x_{i+1}} p(y)e^{-\gamma (y-x_{i})}dy,
\end{align}
with polynomial $p(x)$ interpolating $v(x)$ on stencil $S(i)=\left\{x_{i+r-k},\ldots,x_{i+r} \right\}$ which includes $x_{i}$ and $x_{i+1}$.

However, in some cases, the linear quadrature formula \eqref{eq:linear_JL} and \eqref{eq:linear_JR} with a fixed stencil may generate the entropy-violating solutions, which will be demonstrated in Section \ref{sec:result}. The main reason is that the viscosity solution of the H-J equation may involve discontinuous derivative. Hence,  an approximation with a fixed stencil such as \eqref{eq:linear_JL} and \eqref{eq:linear_JR} will develop spurious oscillations and eventually lead to failure of the scheme. We found that the WENO quadrature and the nonlinear filter, discussed in \cite{christlieb2017kernel}, are effective to control oscillations and capture the correct solution for solving nonlinear degenerate parabolic equations. In this work, we adapt this methodology to solve H-J equations as both types of equations may develop discontinuous derivatives for the solutions. The numerical evidence provided in Section \ref{sec:result} justifies the effectiveness of the WENO method and nonlinear filter in controlling oscillation as well as in capturing the viscosity solution to the H-J equation.

In summary, when $\phi_{x}$ is periodic for example, we will use the following modified formulation instead of \eqref{eq:partialsum_per} to approximate $\phi_{x}^{\pm}$:
\begin{subequations}
\label{eq:change_per}
\begin{align}
& \phi_{x,i}^{-}=\gamma\mathcal{D}_{L}[\phi,\gamma](x_{i}) + \gamma\sum_{p=2}^{k}\sigma_{i,L}^{p-2}\mathcal{D}_{L}^{p}[\phi,\gamma](x_{i}),\\
& \phi_{x,i}^{+}=-\gamma\mathcal{D}_{R}[\phi,\gamma](x_{i}) - \gamma\sum_{p=2}^{k}\sigma_{i,R}^{p-2}\mathcal{D}_{R}^{p}[\phi,\gamma](x_{i});
\end{align}
\end{subequations}
while for non-periodic boundary conditions, \eqref{eq:partialsum_dir} are changed into
\begin{subequations}
	\label{eq:change_dir}
	\begin{align}
	& \phi_{x,i}^{-}=\gamma\mathcal{D}_{L}[\phi_{1,1},\gamma](x_{i}) + \gamma\sum_{p=2}^{k}\sigma_{i,L}^{p-2}\mathcal{D}_{L}[\phi_{1,p},\gamma](x_{i}),\\
	& \phi_{x,i}^{+}=-\gamma\mathcal{D}_{R}[\phi_{2,1},\gamma](x_{i}) - \gamma\sum_{p=2}^{k}\sigma_{i,R}^{p-2}\mathcal{D}_{R}[\phi_{2,1},\gamma](x_{i}).
	\end{align}
\end{subequations} 
Note that the WENO quadrature is only applied for approximating the operators with $p = 1$ in \eqref{eq:change_per}, and cheaper high order linear formula are used for those with $p>1$. The filter $\sigma_{i,L}$ and $\sigma_{i,R}$ are generated based on the smoothness indicators from the WENO methodology. 
%For brevity, we skip the details about the construction of the WENO quadrature as well as the nonlinear filter. All the formula have been well documented in \cite{christlieb2017kernel}.

Here, the fifth order WENO-based quadrature for approximating $J^{L}_{i}$ is provided as an example. The corresponding stencil is presented in Figure \ref{Fig0}. We choose the big stencil as $S(i)=\{x_{i-3},\ldots, x_{i+2}\}$ and the three small stencils as $S_{r}(i) =\{ x_{i-3+r},\ldots, x_{i+r}\}$, $r = 0, 1, 2$. For simplicity, we only provide the formulas for the case of a uniform mesh, i.e., $\Delta x_{i}=\Delta x$ for all $i$. Note that the WENO methodology is still applicable to the case of a nonuniform mesh, see \cite{shu2009high}. 

\begin{figure}
	\centering
	\includegraphics[width=0.5\textwidth]{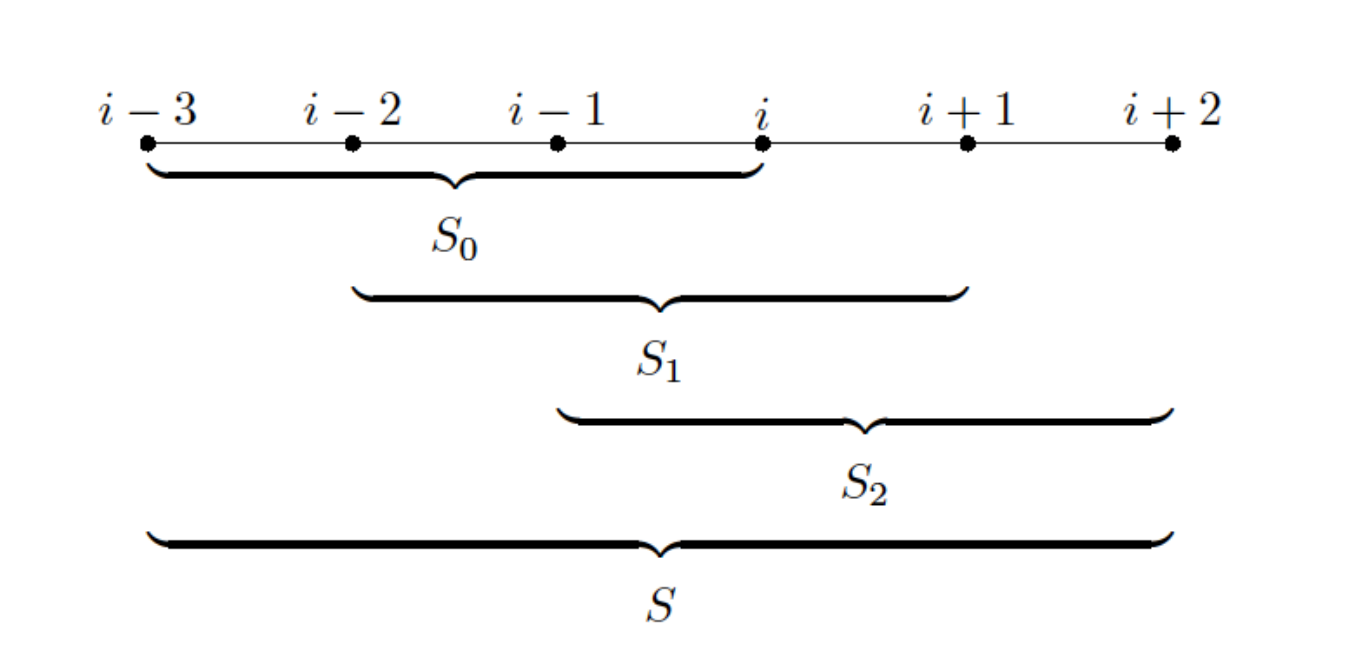}
	\caption{\em The structure of the stencils in WENO integration.   }
	\label{Fig0}
\end{figure}

\begin{enumerate}
	\item
	On each small stencil $S_{r}(i)$, we obtain the approximation as
	\begin{equation}
	\label{eq:weno1}
	J^{L}_{i,r} =\gamma \int_{x_{i-1}}^{x_{i}}e^{-\gamma(x_{i}-y)}p_{r}(y)dx,
	%=\sum_{j=0}^{k}c^{(r)}_{i-r-1+j}v_{i-r-1+j},
	\end{equation}
	where $p_{r}(x)$ is the polynomial interpolating $v(x)$ on nodes $S_{r}(i)$.
	
	\item
	Similarly, on the big stencil $S(i)$, we have
	\begin{equation}
	\label{eq:weno2}
	J^{L}_{i} =\gamma \int_{x_{i-1}}^{x_{i}}e^{-\gamma (x_{i}-y)}p(y)dx=\sum_{r=0}^{2}d_{r}J^{L}_{i,r},
	%=\sum_{j=0}^{2k-1}c_{i-k+j}v_{i-k+j},
	\end{equation}
	with the linear weights $d_{r}$ satisfying $\sum_{r=0}^{2}d_{r}=1$.
	
	\item
	We change the linear weights $d_{r}$ into the following nonlinear weights $\omega_{r}$, which are defined as
	\begin{equation}
	\omega_{r}=\tilde{\omega}_{r}/\sum\limits_{s=0}^{2}\tilde{\omega}_{s}, \ \ r=0,\ 1,\ 2,
	\end{equation}
	with
	\begin{equation*}
	\tilde{\omega}_{r}=d_{r}\left( 1+\frac{\tau_{5}}{\epsilon+\beta_{r}}\right) .
	\end{equation*}
	Here, $\epsilon>0$ is a small number to avoid the denominator becoming zero. We  take $\epsilon=10^{-6}$ in our numerical tests. The smoothness indicator $\beta_{r}$ is defined by
	\begin{equation}
	\beta_{r}=\sum_{l=2}^{3} \int_{x_{i-1}}^{x_{i}}\Delta x_{i}^{2l-3} \left(\frac{\partial^{l}p_{r}(x)}{\partial x^{l}}\right)^2 dx,
	\end{equation}
	which is used to  measure the relative smoothness of the function $v(x)$ in the stencil $S_{r}(i)$. In particular, we have the expressions as 
	\begin{subequations}
		\begin{align*}
		& \beta_{0} = \frac{13}{12} ( -v_{i-3} + 3v_{i-2} - 3v_{i-1} + v_{i} )^2
		+ \frac{1}{4} ( v_{i-3} - 5v_{i-2} + 7v_{i-1} - 3v_{i} )^2,\\
		& \beta_{1} = \frac{13}{12} (-v_{i-2} + 3v_{i-1} - 3v_{i} + v_{i+1} )^2
		+ \frac{1}{4} ( v_{i-2} - v_{i-1} - v_{i} + v_{i+1} )^2,\\
		& \beta_{2} = \frac{13}{12} (-v_{i-1} + 3v_{i} - 3v_{i+1} + v_{i+2} )^2 
		+ \frac{1}{4} (-3v_{i-1} + 7v_{i} - 5v_{i+1} + v_{i+2} )^2.
		\end{align*}
	\end{subequations}
	$\tau_{5}$ is simply defined as the absolute difference between $\beta_{0}$ and $\beta_{2}$ 
	$$\tau_{5}=|\beta_{0}-\beta_{2}|.$$
	Moreover, we define a parameter $\xi_{i}$ as
	\begin{align}
	\xi_{i}=\frac{\beta_{min}}{\beta_{max}},
	\end{align}
	which will be use to construct the nonlinear filter. Here,
	\begin{subequations}
		\begin{align*}
		\beta_{max} = 1 + \left(\frac{\tau_{5} }{  \epsilon + \min(\beta_{0},\beta_{2}) } \right)^2, \quad
		\beta_{min} = 1 + \left(\frac{\tau_{5} }{  \epsilon + \max(\beta_{0},\beta_{2}) } \right)^2.
		\end{align*}
	\end{subequations}
	Note that we have employed the nonlinear weights based on the idea of the WENO-Z scheme proposed in \cite{borges2008improved}, which enjoys less dissipation and higher resolution compared with the original WENO scheme.
	\item
	Lastly, we define the approximation
	\begin{equation}
	J^{L}_{i}=\sum_{r=0}^{2}\omega_{r}J^{L}_{i,r}.
	\end{equation}
	The filter $\sigma_{i,L}$ is defined as 
	\begin{align}
	\sigma_{i,L}=\min (\xi_{i-1}, \xi_{i}).
	\end{align}
	
\end{enumerate}

The process to obtain $J^{R}_{i}$ and $\sigma_{i,R}$ is mirror symmetric to that of $J^{L}_{i}$ and $\sigma_{i,L}$ with
respect to point $x_{i}$.

\section{Two-dimensional Case}
\label{section:2D}

Consider the 2D H-J equation
\begin{equation}
\label{eq:2d}
\phi_{t}+H(\phi_{x},\phi_{y})=0,\quad \phi(x,y,0)=\phi^{0}(x,y).
\end{equation}
Let $(x_{i}, y_{j})$ be the $(i,j)$ node of a 2D orthogonal grid, which can be nonuniform. Denote	
by $\Delta x_{i}=x_{i}-x_{i-1}$ and $\Delta y_{j}=y_{j}-y_{j-1}$ and denote $\phi_{i,j}(t)$ as the solution $\phi(x_{i},y_{j},t)$. Then, as with the 1D case, the semi-discrete scheme is
\begin{align}
\label{eq:2Dscheme}
\frac{d}{dt}\phi_{i,j} + \hat{H}( \phi^{-}_{x,i,j}, \phi^{+}_{x,i,j}; \phi^{-}_{y,i,j}, \phi^{+}_{y,i,j} ) = 0,
\end{align}
%Here, the numerical Hamiltonian $\hat{H}$ is a Lipschitz continuous functions of all four arguments and consistent with $H$
%$$\hat{H}(u,u;v,v)=H(u,v).$$
%$\hat{H}$ is also monotone, meaning it is non-increasing in its first  and third arguments, and non-decreasing in the second and fourth ones, $\hat{H}(\uparrow,\downarrow,\uparrow,\downarrow)$. And
where  $\hat{H}(u,u;v,v)$ is a Lipschitz continuous monotone flux that is consistent with the Hamiltonian $H$. In this work, we employ the following  local Lax-Friedrichs flux given as
\begin{align}
\label{eq:LLF_2D}
\hat{H}(u^{-},u^{+};v^{-},v^{+}) = H(\frac{u^{-}+u^{+}}{2},\frac{v^{-}+v^{+}}{2}) -\alpha_{x}(u^{-},u^{+})\frac{u^{+}-u^{-}}{2} -\alpha_{y}(v^{-},v^{+})\frac{v^{+}-v^{-}}{2},
\end{align}
where,
\begin{align*}
& \alpha_{x}(u^{-},u^{+})=\max_{u\in I(u^{-},u^{+}),C\leq v\leq D} |H_{1}(u,v)|,\\ & \alpha_{y}(v^{-},v^{+})=\max_{A\leq v\leq B, v\in I(v^{-}, v^{+})} |H_{2}(u,v)|.
\end{align*}
Here, $H_{i}(u,v)$ denotes the partial derivative of $H(u,v)$ with respect to the $i^{th}$ argument. $[A,B]$ is the value range of $u^{\pm}$ and $[C,D]$ is the value range of $v^{\pm}$.

In \eqref{eq:2Dscheme}, $\phi^{\pm}_{x,i,j}$ and $\phi^{\pm}_{y,i,j}$ are approximations to $\phi_x(x_{i},y_{j})$ and $\phi_y(x_{i},y_{j})$, respectively, which can be computed directly by the proposed 1D formulation. In particular, when constructing $\phi_{x}^{\pm}$, we fix $y_j$ and apply the 1D operators in $x$-direction. For instance, if $\phi_{x}$ is periodic in $x$-direction, then
\begin{align*}
& \phi_{x,i,j}^{-} \approx \gamma_{x}\sum_{p=1}^{k}\mathcal{D}_{L}^{p} [\phi(\cdot,y_{j}),\gamma_{x}](x_{i}),
& \phi_{x,i,j}^{+}\approx -\gamma_{x}\sum_{p=1}^{k}\mathcal{D}_{R}^{p} [\phi(\cdot,y_{j}),\gamma_{x}](x_{i}).
\end{align*}
Here, we choose $\gamma_{x}=\beta/(\alpha_{x}\Delta t)$ with $\alpha_{x}=\max_{A\leq u\leq B,C\leq v\leq D} |H_{1}(u,v)|$. Similarly, to approximate $\phi_y^\pm$, we fix $x_i$ and obtain
\begin{align*}
& \phi_{y,i,j}^{-} \approx \gamma_{y}\sum_{p=1}^{k}\mathcal{D}_{L}^{p} [\phi(x_{i},\cdot),\gamma_{y}](y_{j}),
& \phi_{y,i,j}^{+}\approx -\gamma_{y}\sum_{p=1}^{k}\mathcal{D}_{R}^{p} [\phi(x_{i},\cdot),\gamma_{y}](y_{j}),
\end{align*}
with $\alpha_{y}=\max_{A\leq u\leq B,C\leq v\leq D} |H_{2}(u,v)|$ and $\gamma_{y}=\beta/(\alpha_{y}\Delta t)$.
Note that in the 2D case,  we need to choose $\beta_{max}$ as half of that for the 1D case to ensure the unconditional stability of the scheme.

For a non-periodic boundary condition, the derivative values at an inflow boundary are still calculated through ILW. To illustrate the idea, we consider a rectangular domain $[a_{x},b_{x}]\times[a_{y},b_{y}]$ for simplicity.  Without loss of generality, assume the left boundary $x=a_{x}$ is an inflow Dirichlet boundary that satisfies 
$$\phi(a_{x},y,t)=f(y,t), \quad  y\in[a_y,b_y]$$
and $f(y,t)$ is a given function. For a fixed time $t$, denote $\phi^{(k_1,k_2)}_{0,\, j} = \frac{\partial^{k_1+k_2}}{\partial x^{k^1}\partial y^{k_2}}\phi(a_x,y_j,t)$, where $(a_x,y_j)$ is a boundary grid point. For a $k^{th}$ order approximation, we need to compute the derivative values $\phi^{(k_1,0)}_{0,\, j}$ for $1\le k_1\le k$ as in the 1D case. Note that $\phi_{t}$ and $\phi_{y}$ at boundary $x=a_x$ can be easily obtained, which are the functions of $y$. Hence, $\phi^{1,0}_{0,\, j}$ can be computed through relation
\begin{align}
f_{t}(y_j)+H(\phi^{(1,0)}_{0,\, j},f_{y}(y_j))=0.
\end{align}
In the case of more than one roots, we should use  
$$H_{1}(\phi^{(1,0)}_{0,\, j},f_{y}(y_j))\geq0,$$
to single out the correct solution of $\phi^{(1,0)}_{0,\, j}$ that ensures the left boundary $x=a_{x}$ is an inflow boundary.
Similar to the 1D case, if this condition still cannot pin down a root, then we would choose the root that is closest to the value obtained from extrapolation.
If we take the derivative with respect to $y$ for the original H-J equation \eqref{eq:2d}, then we obtain
\begin{align}
\label{eq:ilw1}
\phi_{ty}=-H_{1}(\phi_{x},\phi_{y})\phi_{xy} -H_{2}(\phi_{x},\phi_{y})\phi_{yy}.
\end{align}
Notice that at the point $(a_x,y_j)$, the equation becomes 
\begin{align}
\label{eq:ilw1_1}
f_{ty}(y_j)=-H_{1}(\phi^{(1,0)}_{0,\, j},f_{y}(y_j))\phi_{0,\, j}^{(1,1)} -H_{2}(\phi^{(1,0)}_{0,\, j},f_{y}(y_j))f_{yy}(y_j).
\end{align}
The only unknown quantity in \eqref{eq:ilw1} is $\phi_{0,\, j}^{(1,1)}$, which can be easily solved. Furthermore, we take the derivative with respect to $t$ for \eqref{eq:2d} and get
\begin{align}
\label{eq:ilw2}
\phi_{tt}=(H_{1})^2\phi_{xx} +2H_{1}H_{2}\phi_{xy} +(H_{2})^2\phi_{yy}.
\end{align}
When evaluating equation $\eqref{eq:ilw2}$ at point $(a_x,y_j)$, the only unknown quantity is $\phi^{(2,0)}_{0,\, j}$, which can be easily obtained by solving \eqref{eq:ilw1}.  Repeatedly applying this ILW procedure, we are able to obtain the derivative values $\phi^{(k_1,0)}_{0,\, j}$ for  $1\le k_1\le k$. 

We now consider an inflow Neumann boundary condition imposed at boundary $x=a_x$, or equivalently $\phi_x(a_x,y,t) = g(y,t)$ is specified, where $g(y,t)$ is a given function. In this case, at a fixed time $t$, we directly get $\phi_{0,\, j}^{(1,0)} = g(y_j)$. To obtain $\phi_{0,\, j}^{(2,0)}$, we first differentiate the H-J equation with respect to $x$, yielding
$$\phi_{tx} = -H_1(\phi_x,\phi_y)\phi_{xx} - H_2(\phi_x,\phi_y)\phi_{xy}.$$
When evaluating at $(a_x,y_j)$, the equation becomes 
\begin{equation}
f_t(y_j) = - H_1(\phi_{0,\, j}^{(1,0)},\phi_j^{(0,1)})\phi_{0,\, j}^{(2,0)} -H_2(\phi_{0,\, j}^{(1,0)},\phi_{0,\, j}^{(0,1)})g_{y}(y_j),\label{eq:ilwn1}
\end{equation}
where we have used the identity $\phi_{xy}(a_x,y) = g_y(y)$.
Note that, unlike the Dirichlet case, there are two unknowns in \eqref{eq:ilwn1} including $\phi^{(0,1)}_{0,\, j}$ and $\phi^{(2,0)}_{0,\, j}$. As suggested in \cite{tan2010inverse}, we can use the numerical differentiation to approximate the tangential derivatives at the boundary points, i.e., $\phi^{(0,1)}_{0,\, j}$ in this case. Once $\phi_{0,\, j}^{(0,1)}$ is obtained, we can solve for the only unknown quantity $\phi^{(2,0)}_{0,\, j}$ from \eqref{eq:ilwn1}.  We then continue applying ILW to generate $\phi_{0,\, j}^{(k_1,0)}$ for all $ k_1\le k$ and again the needed tangential derivatives are obtained by numerical differentiation.

As in the 1D case, for an outflow boundary, we compute derivative values through extrapolation with suitable orders of accuracy. 

Lastly, we remark that, as a remarkable property, ILW can be conveniently applied to handle complex geometry. In fact, we only need to rotate the coordinate system locally at the underlying boundary point so that the new axes point to the normal and tangential directions of the boundary. By doing so, we are able to apply the above ILW procedure and generate the required derivative values at the inflow boundary.

\section{Numerical results}\label{sec:result}
In this section, we present the results of the proposed schemes to demonstrate their efficiency and efficacy. For the 1D cases, we choose the time step as
\begin{align}
\Delta t=\text{CFL}\frac{\Delta x}{\alpha},
\end{align}
where $\alpha$ is the maximum wave propagation speed.
For the 2D cases, the time step is chosen as
\begin{align}
\Delta t=\frac{\text{CFL}}{\max(\alpha_{x}/\Delta x,\alpha_{y}/\Delta y)},
\end{align}
where $\alpha_x$ and $\alpha_y$ are the maximum wave propagation speeds in $x-$ and $y-$ directions, respectively. We use the fifth order linear or WENO quadrature formula to compute $J_i^L$ and  $J_i^R$.
%with $h_{x}=\min_{i}\Delta x_{i}$ and with $h_y=\min_{j}\Delta y_{j}$.
Unless additional declaration is made, linear quadrature formula will be adopted. Note that the lower order temporal error will always dominate the total numerical error in our examples. Furthermore, except we make additional declaration, we only report the numerical results computed by the scheme with $k=3$ for brevity and two CFLs including 0.5 and 2 are used to demonstrate the performance.

\begin{exa} \label{ex:add1}
We first test the accuracy of the schemes for solving the linear problem
\begin{align}
\left\{\begin{array}{ll}
\phi_{t}+\phi_{x}=0, & -\pi\leq x\leq \pi\\
\phi^0(x)=\sin(x).\\
\end{array}
\right.
\end{align}
with a Dirichlet boundary condition $\phi(-\pi,t)=\sin(-\pi-t)$. The exact solution is $\phi(x,t)=\sin(x-t)$.
In Table \ref{tab:add1}, we test the problem on a uniform mesh and report the errors and the associated order of accuracy  at time $T=20$. 
Note that when $k=1$, the second order accuracy is observed, which is unexpected since the error estimate only implies first order accuracy. Such superconvergence is observed because when applied to the linear problem, the first order scheme with $\beta=2$ is equivalent to the second order Crank-Nicolson scheme in the MOL$^T$ framework \cite{christlieb2016weno}. For general nonlinear problems, the superconvergence cannot be observed anymore, demonstrating the sharpness of the error estimate.
\end{exa}

\begin{table}[htb]
	\caption{\label{tab:add1}\em Example \ref{ex:add1}: $L_{\infty}$ Errors and orders of accuracy for linear equation on the uniform meshes with a Dirichlet boundary condition. $T=20$. }
	\centering
	\bigskip
	\begin{small}
		\begin{tabular}{|c|c|cc|cc|cc|}
			\hline
			\multirow{2}{*}{CFL} &  \multirow{2}{*}{$N$} & \multicolumn{2}{c|}{$k=1$. $\beta=2$.} & \multicolumn{2}{c|}{$k=2$. $\beta=1$.} & \multicolumn{2}{c|}{$k=3$. $\beta=1.2$.}\\
			\cline{3-8}
			& &  error &   order  &  error &  order  &  error  & order  \\\hline
			\multirow{6}{*}{0.5}  
			&   20  &  8.878E-03  &     --     &  1.132E-01  &     --      &  1.989E-03  &     --     \\
			&   40  &  2.285E-03  &  1.958  &  3.054E-02  &  1.890  &  1.679E-04  &  3.566  \\
			&   80  &  5.760E-04  &  1.988  &  7.843E-03  &  1.961  &  1.676E-05  &  3.325  \\
			& 160  &  1.443E-04  &  1.997  &  1.973E-03  &  1.991  &  1.926E-06  &  3.121  \\
			& 320  &  3.615E-05  &  1.998  &  4.946E-04  &  1.996  &  2.356E-07  &  3.031  \\
			& 640  &  9.039E-06  &  2.000  &  1.237E-04  &  1.999  &  2.930E-08  &  3.007   \\\hline    
			\multirow{6}{*}{1} 
			&  20   &  3.872E-02  &     --     &  3.556E-01  &     --      &  2.335E-02  &     --      \\
			&  40   &  9.813E-03  &  1.980  &  1.133E-01  &  1.651  &  1.936E-03  &  3.592  \\
			&  80   &  2.465E-03  &  1.993  &  3.064E-02  &  1.886  &  1.649E-04  &  3.554  \\
			& 160  &  6.195E-04  &  1.992  &  7.846E-03  &  1.965  &  1.666E-05  &  3.307  \\
			& 320  &  1.551E-04  &  1.998  &  1.973E-03  &  1.991  &  1.923E-06  &  3.115  \\
			& 640  &  3.882E-05  &  1.998  &  4.947E-04  &  1.996  &  2.355E-07  &  3.029  \\\hline      
			\multirow{6}{*}{2}  
			&   20  &  1.754E-01   &    --     &  8.534E-01  &     --      &  2.069E-01  &      --     \\
			&   40  &  4.258E-02  &  2.042  &  3.561E-01  &  1.261  &  2.339E-02  &  3.145  \\
			&   80  &  1.067E-02  &  1.997  &  1.135E-01  &  1.649  &  1.945E-03  &  3.588  \\
			& 160  &  2.667E-03  &  2.000  &  3.065E-02  &  1.889  &  1.651E-04  &  3.559  \\
			& 320  &  6.685E-04  &  1.996  &  7.849E-03  &  1.965  &  1.667E-05  &  3.308  \\
			& 640  &  1.672E-04  &  1.999  &  1.974E-03  &  1.992  &  1.924E-06  &  3.115  \\\hline 	
		\end{tabular}
	\end{small}
\end{table}

\begin{exa}
	\label{ex1}
Next, we solve the 1D Burgers' equation:
\begin{align}
\left\{\begin{array}{ll}
\phi_{t}+\frac{1}{2}(\phi_{x}+1)^2=0, & -1\leq x\leq 1\\
\phi^0(x)=-\cos(\pi x)\\
\end{array}
\right.
\end{align}
with a $2$-periodic boundary condition.

When $T= 0.5/\pi^2$, the solution is still smooth. The $L_{\infty}$ errors and orders of accuracy are presented with uniform  and nonuniform meshes in  Table \ref{tab1} and Table \ref{tab1b}, respectively. The nonuniform meshes are obtained
from uniform meshes with $20\%$ random perturbation. The designed order of accuracy is observed for both types of meshes. Note that the schemes allow for large CFL numbers due to the notable unconditional stability property.

When $T=3.5/\pi^2$, the solution of the Burgers' equation has developed  discontinuous derivative. We benchmark our method by comparing the numerical results with the exact solution in Figure \ref{Fig1}. Here, a uniform mesh with $N=40$ is used. It is observed that our schemes are able to approximate the non-smooth solution very well without  producing noticeable oscillations. Large CFLs can be used to save computational cost as with the linear case. Furthermore, we note that, compared with low order schemes, the high order one is able to capture the solution structure more accurately.
\end{exa}

\begin{table}[htb]
	\caption{\label{tab1}\em Example \ref{ex1}: $L_{\infty}$ Errors and orders of accuracy for 1D Burgers' equation on the uniform meshes. $T=0.5/\pi^2$. }
	\centering
	\bigskip
	\begin{small}
		\begin{tabular}{|c|c|cc|cc|cc|}
			\hline
			\multirow{2}{*}{CFL} &  \multirow{2}{*}{$N$} & \multicolumn{2}{c|}{$k=1$. $\beta=2$.} & \multicolumn{2}{c|}{$k=2$. $\beta=1$.} & \multicolumn{2}{c|}{$k=3$. $\beta=1.2$.}\\
			\cline{3-8}
			& &  error &   order  &  error &  order  &  error  & order  \\\hline
			\multirow{6}{*}{0.5}  
			&   20  &  1.593E-02  &      --     &  1.757E-02  &      --      &  6.315E-04  &     --      \\
			&   40  &  7.912E-03  &   1.010  &  4.810E-03  &   1.869  &  4.710E-05  &   3.745  \\
			&   80  &  4.042E-03  &   0.969  &  1.279E-03  &   1.911  &  4.678E-06  &   3.332  \\
			& 160  &  2.067E-03  &   0.967  &  3.313E-04  &   1.949  &  5.285E-07  &   3.146  \\
			& 320  &  1.047E-03  &   0.982  &  8.492E-05  &   1.964  &  6.058E-08  &   3.125  \\
			& 640  &  5.239E-04  &   0.999  &  2.132E-05  &   1.994  &  7.097E-09  &   3.094  \\\hline            
			\multirow{6}{*}{1}  
			&   20  &  3.066E-02  &      --     &  5.891E-02  &     --   &  4.797E-03  &      --     \\
			&   40  &  1.579E-02  &   0.957  &  1.767E-02  &   1.737  &  5.779E-04  &   3.053  \\
			&   80  &  7.908E-03  &   0.998  &  4.839E-03  &   1.869  &  5.260E-05  &   3.458  \\
			& 160  &  4.082E-03  &   0.954  &  1.279E-03  &   1.919  &  5.030E-06  &   3.386  \\
			& 320  &  2.067E-03  &   0.981  &  3.314E-04  &   1.949  &  5.373E-07  &   3.227  \\
			& 640  &  1.048E-03  &   0.980  &  8.492E-05  &   1.964  &  6.082E-08  &   3.143  \\\hline 
			\multirow{6}{*}{2}  
			&   20  &  8.435E-02  &      --     &  1.753E-01  &      --      &  3.901E-02  &      --     \\
			&   40  &  3.160E-02  &   1.417  &  6.078E-02  &   1.528  &  5.347E-03  &   2.867  \\
			&   80  &  1.588E-02  &   0.992  &  1.777E-02  &   1.774  &  5.806E-04  &   3.203  \\
			& 160  &  8.004E-03  &   0.988  &  4.838E-03  &   1.877  &  5.275E-05  &   3.460  \\
			& 320  &  4.082E-03  &   0.972  &  1.280E-03  &   1.919  &  5.035E-06  &   3.389  \\
			& 640  &  2.069E-03  &   0.980  &  3.314E-04  &   1.949  &  5.374E-07  &   3.228  \\\hline 	
		\end{tabular}
	\end{small}
\end{table}

\begin{table}[htb]
	\caption{\label{tab1b}\em Example \ref{ex1}: $L_{\infty}$ Errors and orders of accuracy for 1D Burgers' equation on the non-uniform meshes. $T=0.5/\pi^2$. }
	\centering
	\bigskip
	\begin{small}
		\begin{tabular}{|c|c|cc|cc|cc|}
			\hline
			\multirow{2}{*}{CFL} &  \multirow{2}{*}{$N$} & \multicolumn{2}{c|}{$k=1$. $\beta=2$.} & \multicolumn{2}{c|}{$k=2$. $\beta=1$.} & \multicolumn{2}{c|}{$k=3$. $\beta=1.2$.}\\
			\cline{3-8}
			& &  error &   order  &  error &  order  &  error  & order  \\\hline
			\multirow{6}{*}{0.5}  
			&   20  &  1.526E-02  &      --      &  1.791E-02  &     --      &  5.339E-04  &      --     \\
			&   40  &  8.278E-03  &   0.882  &  4.866E-03  &   1.880  &  6.147E-05  &   3.118  \\
			&   80  &  4.124E-03  &   1.005  &  1.288E-03  &   1.917  &  4.665E-06  &   3.720  \\
			& 160  &  2.069E-03  &   0.995  &  3.324E-04  &   1.954  &  5.286E-07  &   3.142  \\
			& 320  &  1.047E-03  &   0.982  &  8.505E-05  &   1.967  &  6.059E-08  &   3.125  \\
			& 640  &  5.239E-04  &   0.999  &  2.134E-05  &   1.995  &  7.097E-09  &   3.094  \\\hline 
			\multirow{6}{*}{1}  
			&   20  &  3.085E-02  &      --      &  6.300E-02  &      --     &  4.751E-03  &      --     \\
			&   40  &  1.590E-02  &   0.956  &  1.826E-02  &   1.787  &  5.823E-04  &   3.029  \\
			&   80  &  7.990E-03  &   0.993  &  4.910E-03  &   1.894  &  5.028E-06  &   3.388  \\
			& 160  &  4.083E-03  &   0.968  &  1.288E-03  &   1.930  &  7.339E-05  &   2.596  \\
			& 320  &  2.068E-03  &   0.982  &  3.325E-04  &   1.954  &  5.373E-07  &   3.226  \\
			& 640  &  1.048E-03  &   0.981  &  8.506E-05  &   1.967  &  6.083E-08  &   3.143  \\\hline  		 
			\multirow{6}{*}{2}  
			&   20  &  8.400E-02  &      --     &  2.025E-01  &      --      &  3.790E-02  &      --     \\
			&   40  &  3.159E-02  &   1.411  &  6.421E-02  &   1.657  &  5.340E-03  &   2.827  \\
			&   80  &  1.599E-02  &   0.982  &  1.829E-02  &   1.812  &  5.796E-04  &   3.204  \\
			& 160  &  8.005E-03  &   0.998  &  4.909E-03  &   1.897  &  5.279E-05  &   3.457  \\
			& 320  &  4.083E-03  &   0.971  &  1.288E-03  &   1.930  &  5.034E-06  &   3.390  \\
			& 640  &  2.069E-03  &   0.981  &  3.325E-04  &   1.954  &  5.374E-07  &   3.228  \\\hline 	
		\end{tabular}
	\end{small}
\end{table}

\begin{figure}
	\centering
	\subfigure[$k=1$. $\beta=2$. Uniform mesh.]{
		\includegraphics[width=0.3\textwidth]{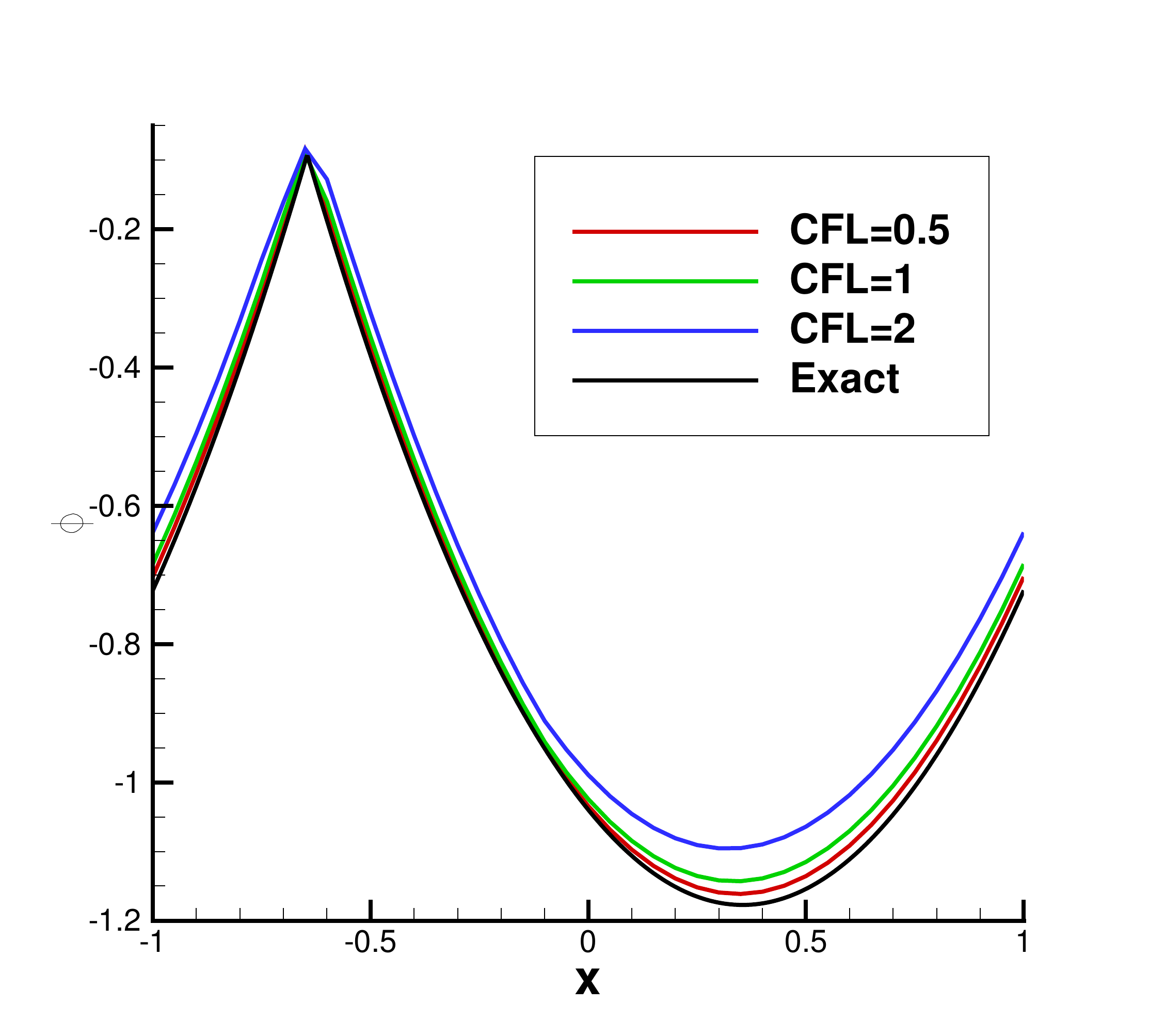}\label{Fig1.1}}
	\subfigure[$k=2$. $\beta=1$. Uniform mesh.]{
		\includegraphics[width=0.3\textwidth]{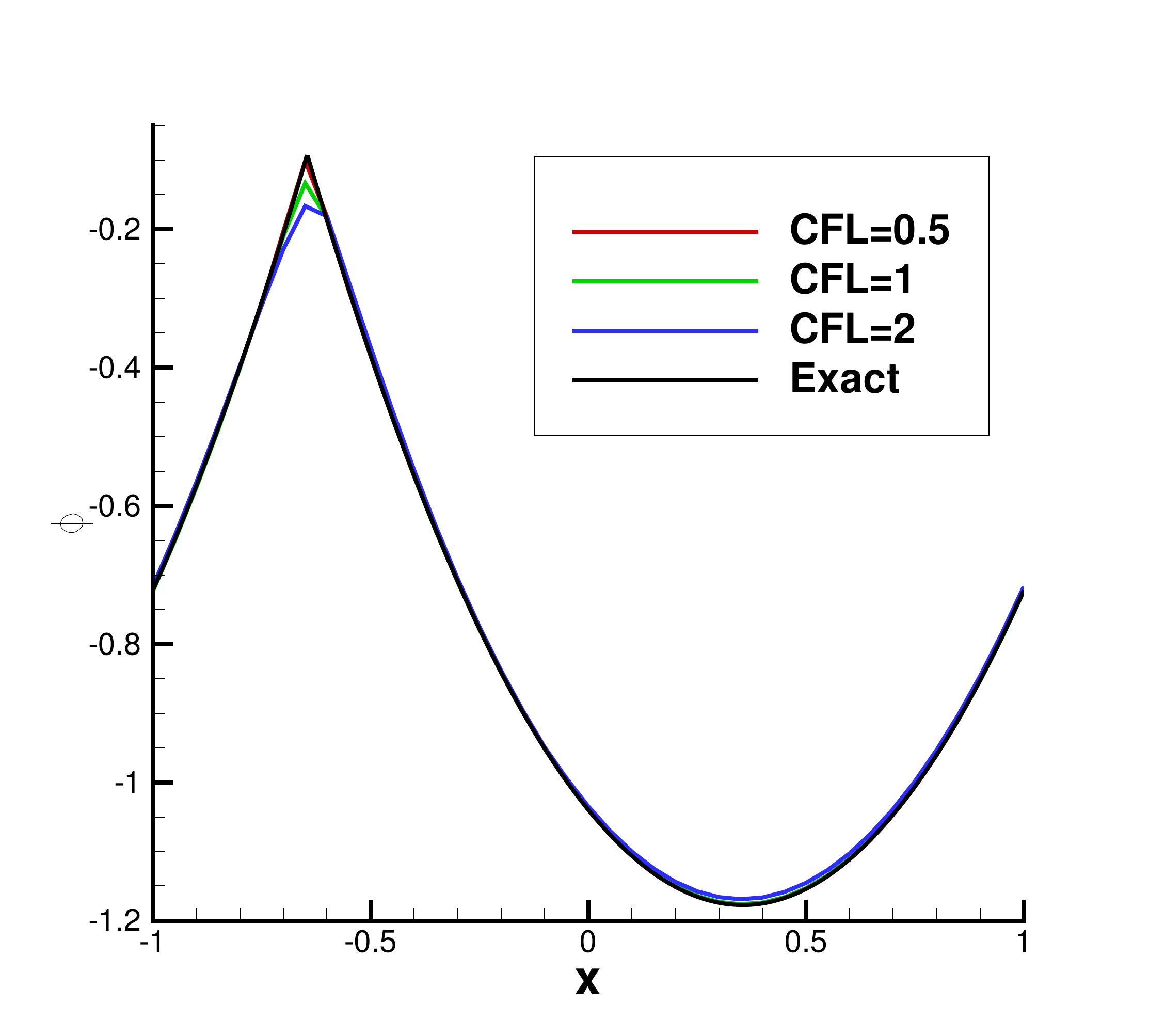}\label{Fig1.2}}
	\subfigure[$k=3$. $\beta=1.2$. Uniform mesh. ]{
		\includegraphics[width=0.3\textwidth]{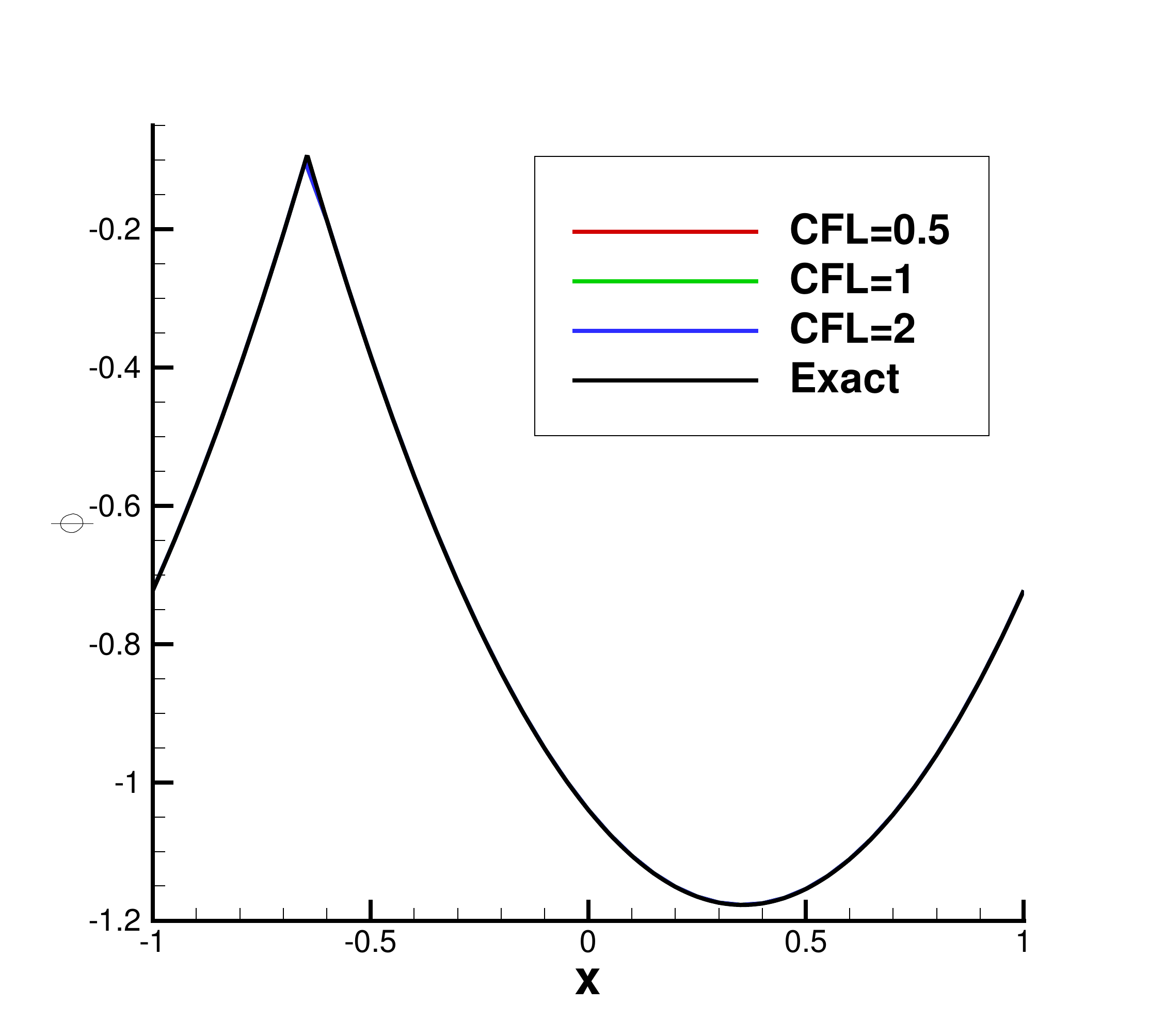}\label{Fig1.3}}
	\subfigure[$k=1$. $\beta=2$. Nonuniform mesh. ]{
		\includegraphics[width=0.3\textwidth]{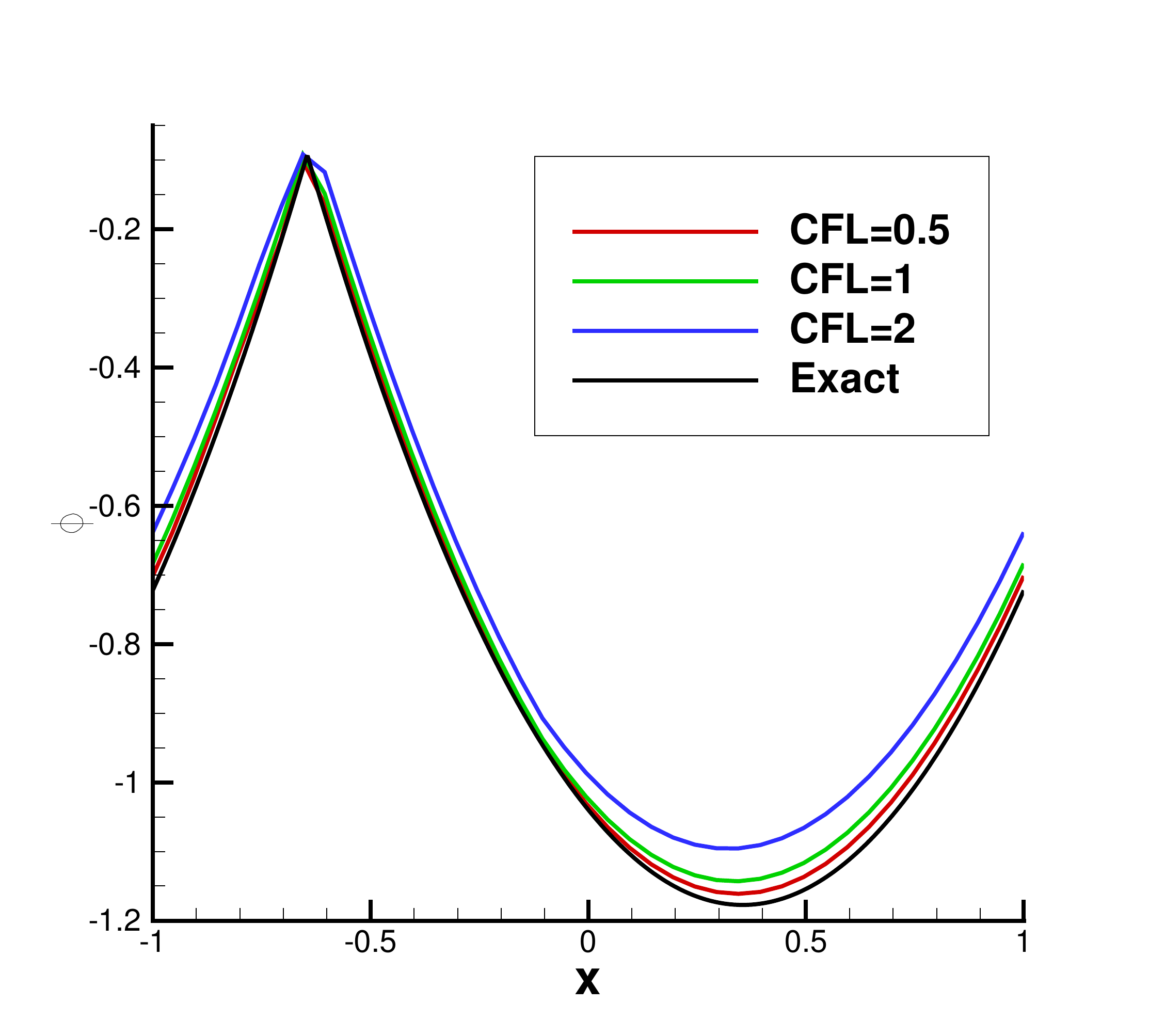}\label{Fig1.4}}
	\subfigure[$k=2$. $\beta=1$. Nonuniform mesh. ]{
		\includegraphics[width=0.3\textwidth]{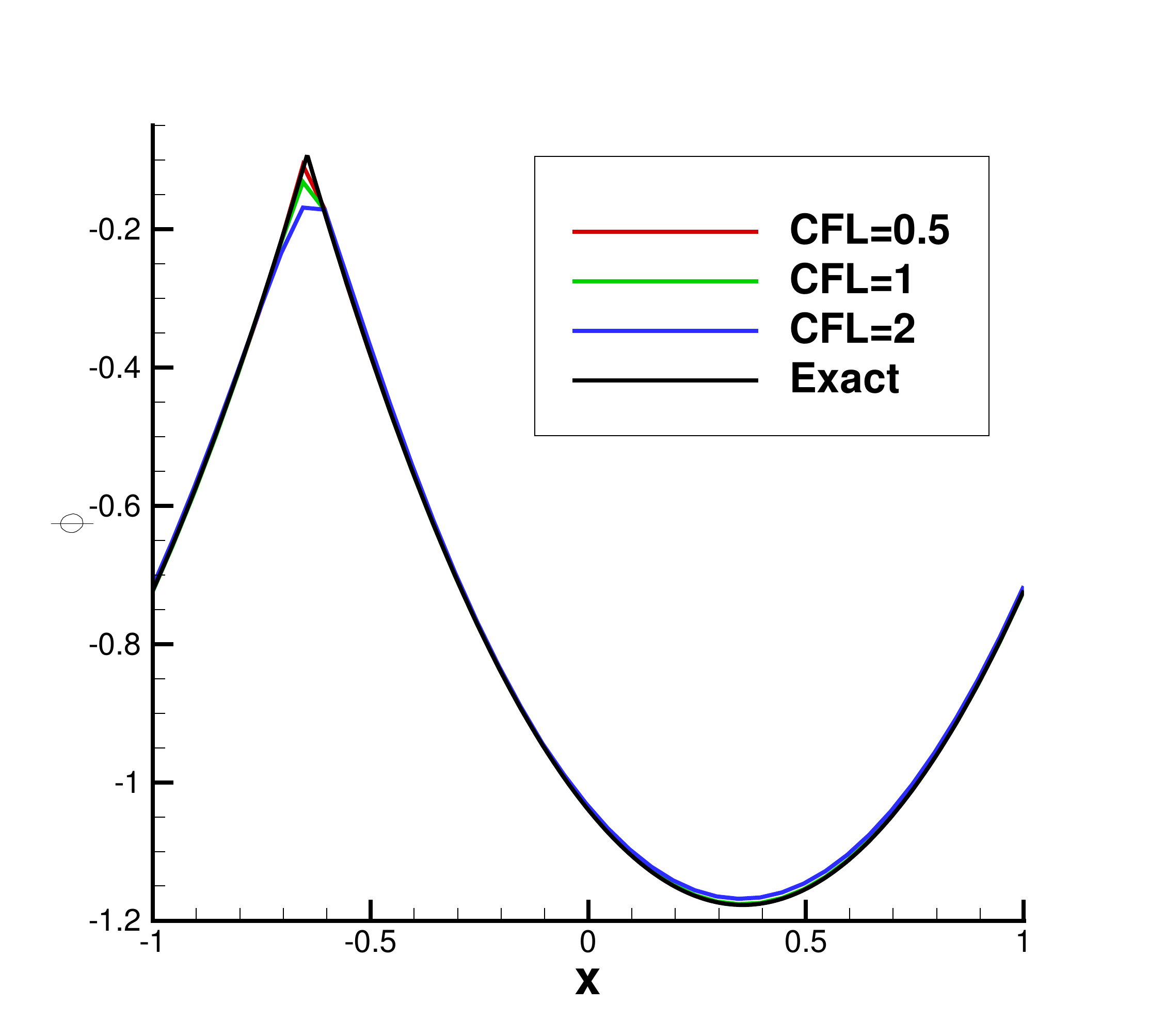}\label{Fig1.5}}
	\subfigure[$k=3$. $\beta=1.2$. Nonuniform mesh. ]{
		\includegraphics[width=0.3\textwidth]{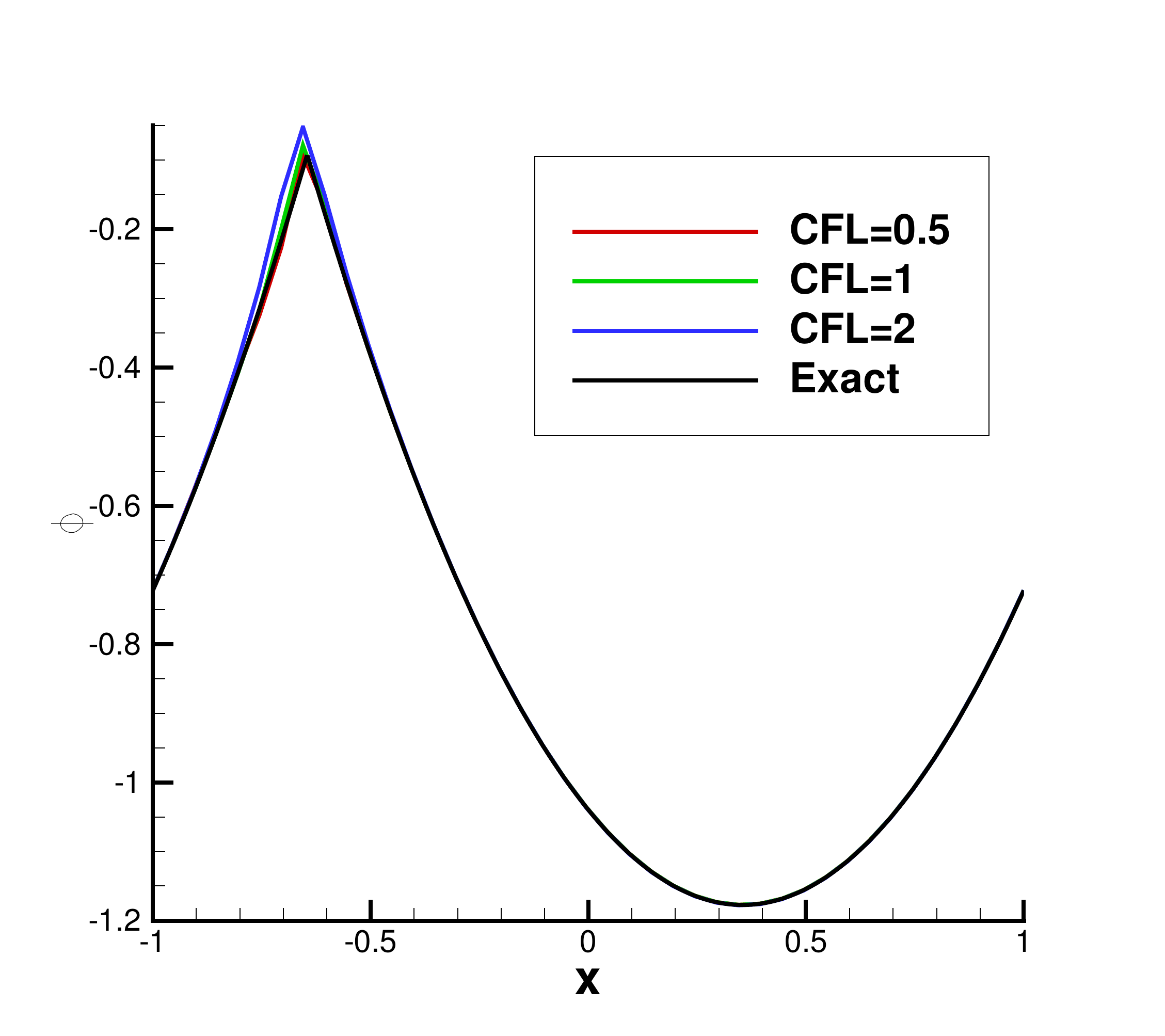}\label{Fig1.6}}
	\caption{\em  Example \ref{ex1}: 1D Burgers' equation. $T=3.5/\pi^2$. $N=40$ grid points. }
	\label{Fig1}
\end{figure}

\begin{exa} \label{ex2}
Consider the 1D H-J equation with a non-convex Hamiltonian:
\begin{align}
\left\{\begin{array}{ll}
\phi_{t}-\cos(\phi_{x}+1)=0, & -1\leq x\leq 1\\
\phi^{0}(x)=-\cos(\pi x)\\
\end{array}
\right.
\end{align}
with a $2$-periodic boundary condition.

When $T= 0.5/\pi^2$, the solution is still smooth. We report $L_{\infty}$ errors and orders of accuracy in Table \ref{tab2} to numerically demonstrate that our schemes can achieve the designed order accuracy even if large CFLs are used.
When $T=1.5/\pi^2$, the solution has developed discontinuous derivative. In Figure \ref{Fig2}, we compare the numerical results with the exact solutions on a uniform mesh with $N=40$. Similar to the previous example, our schemes are able to approximate the  solution accurately without producing noticeable oscillations.
\end{exa}

\begin{table}[htb]
	\caption{\label{tab2}\em Example \ref{ex2}: Errors and orders of accuracy for 1D equation with a non-convex Hamiltonian. $T=0.5/\pi^2$.  }
	\centering
	\bigskip
	\begin{small}
		\begin{tabular}{|c|c|cc|cc|cc|}
			\hline
			\multirow{2}{*}{CFL} &  \multirow{2}{*}{$N$} & \multicolumn{2}{c|}{$k=1$. $\beta=2$.} & \multicolumn{2}{c|}{$k=2$. $\beta=1$.} & \multicolumn{2}{c|}{$k=3$. $\beta=1.2$.}\\
			\cline{3-8}
			& &  error &   order  &  error &  order  &  error  & order  \\\hline
			\multirow{6}{*}{0.5}		
			&   20  &  4.250E-03  &      --      &  2.616E-03  &      --     &  4.115E-04  &      --     \\
			&   40  &  1.951E-03  &   1.123  &  8.490E-04  &   1.624  &  3.841E-05  &   3.421  \\
			&   80  &  1.029E-03  &   0.923  &  2.407E-04  &   1.818  &  3.944E-06  &   3.284  \\
			& 160  &  5.082E-04  &   1.017  &  6.510E-05  &   1.887  &  5.830E-07  &   2.758  \\
			& 320  &  2.546E-04  &   0.997  &  1.716E-05  &   1.923  &  7.282E-08  &   3.001  \\
			& 640  &  1.263E-04  &   1.011  &  4.367E-06  &   1.975  &  8.766E-09  &   3.054  \\\hline         
			\multirow{6}{*}{1}  
			&   20  &  1.078E-02  &      --      &  7.689E-03  &      --     &  4.453E-04  &      --     \\
			&   40  &  4.601E-03  &   1.229  &  2.630E-03  &   1.548  &  9.971E-05  &   2.159  \\
			&   80  &  2.079E-03  &   1.146  &  8.533E-04  &   1.624  &  2.936E-05  &   1.764 \\
			& 160  &  1.019E-03  &   1.029  &  2.407E-04  &   1.826  &  4.678E-06  &   2.650  \\
			& 320  &  5.082E-04  &   1.004  &  6.510E-05  &   1.886  &  6.026E-07  &   2.956  \\
			& 640  &  2.546E-04  &   0.997  &  1.716E-05  &   1.923  &  7.348E-08  &   3.036  \\\hline 
			\multirow{6}{*}{2}  
			&   20  &  3.438E-02  &      --     &  3.629E-02  &      --      &  7.600E-04  &       --     \\
			&   40  &  1.166E-02  &   1.560  &  7.990E-03  &   2.183  &  6.513E-05  &   3.545  \\
			&   80  &  4.665E-03  &   1.322  &  2.650E-03  &   1.592  &  1.183E-04  &  -0.861  \\
			& 160  &  2.073E-03  &   1.170  &  8.536E-04  &   1.634  &  2.995E-05  &   1.981  \\
			& 320  &  1.019E-03  &   1.025  &  2.407E-04  &   1.827  &  4.690E-06  &   2.675  \\
			& 640  &  5.082E-04  &   1.003  &  6.511E-05  &   1.886  &  6.032E-07  &   2.959  \\\hline 
		\end{tabular}
	\end{small}
\end{table}

\begin{figure}
	\centering
	\subfigure[$k=1$. $\beta=2$. ]{
		\includegraphics[width=0.3\textwidth]{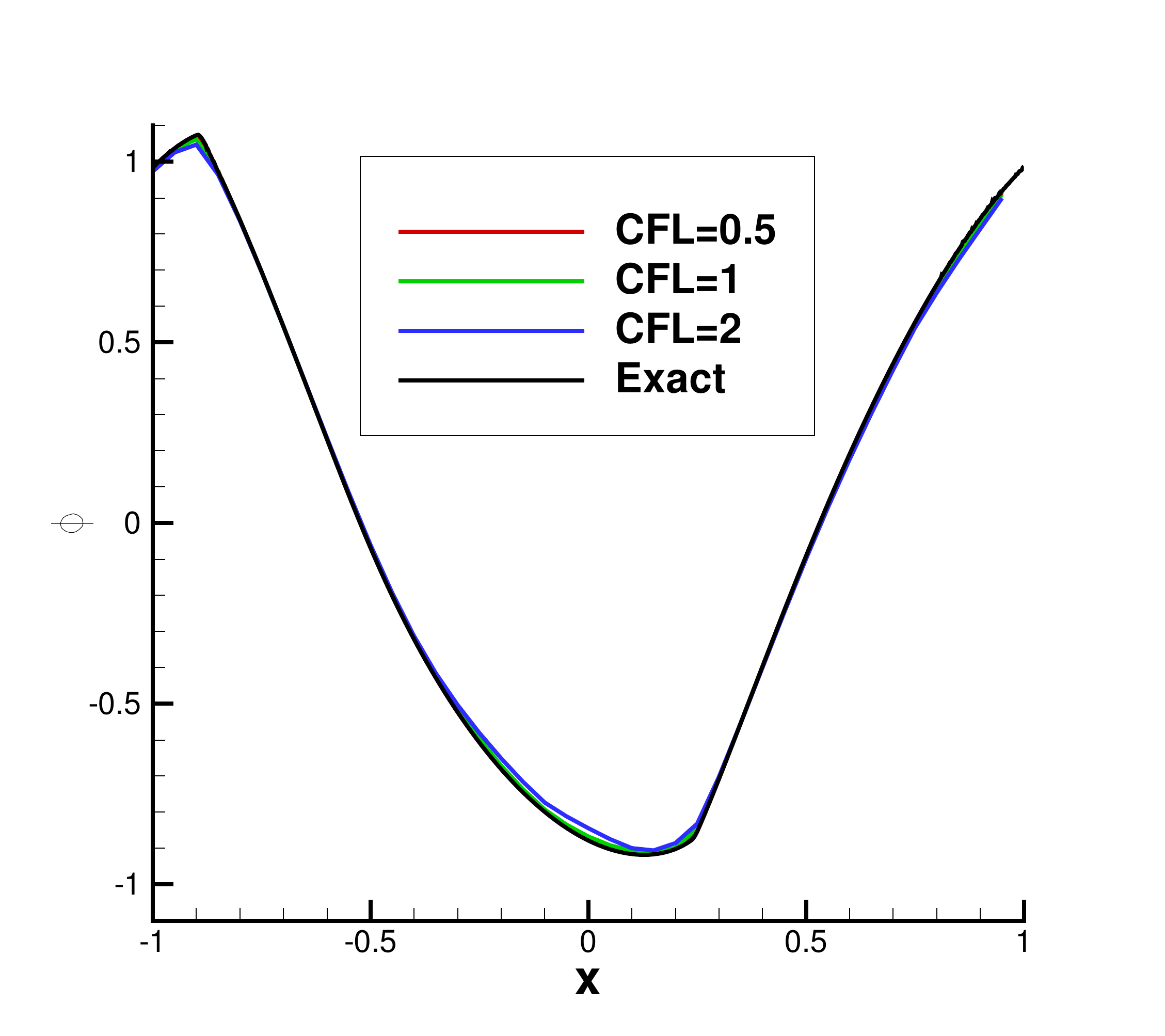}\label{Fig2.1}}
	\subfigure[$k=2$. $\beta=1$. ]{
		\includegraphics[width=0.3\textwidth]{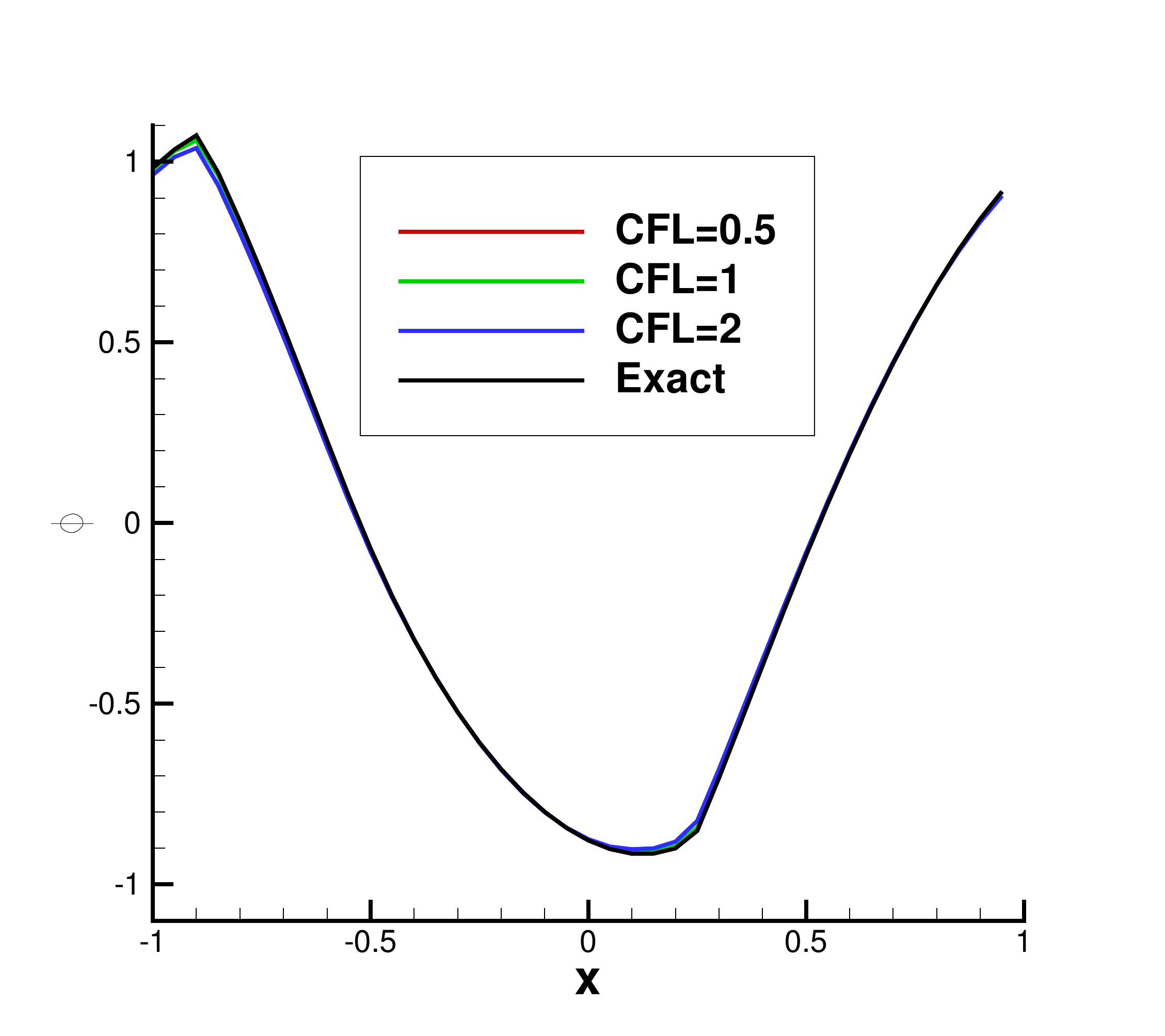}\label{Fig2.2}}
	\subfigure[$k=3$. $\beta=1.2$. ]{
		\includegraphics[width=0.3\textwidth]{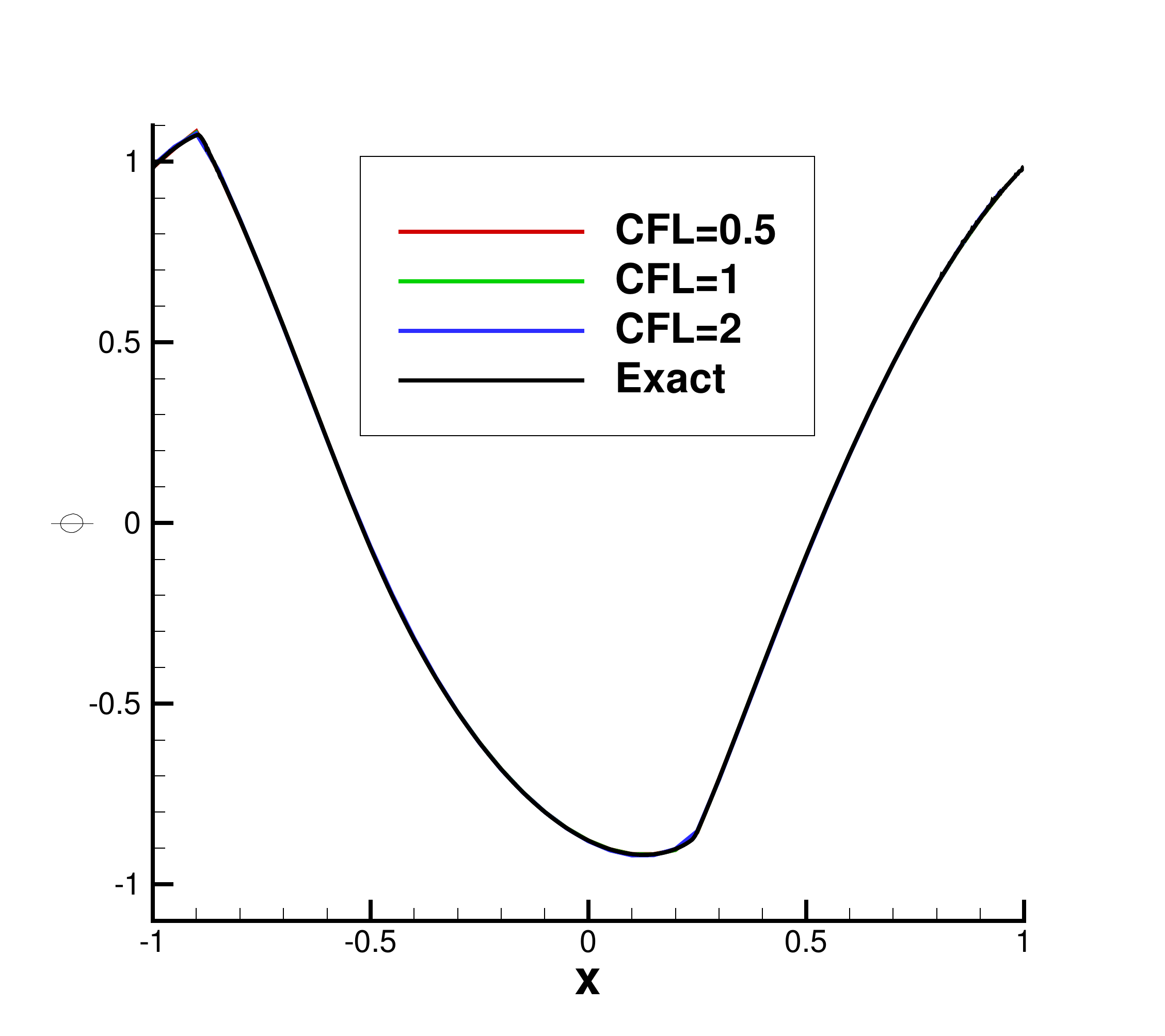}\label{Fig2.3}}
	\caption{\em  Example \ref{ex2}: 1D HJ equation with a non-convex Hamiltonian. $T=1.5/\pi^2$. $N=40$ grid points. }
	\label{Fig2}
\end{figure}

\begin{exa}\label{ex3}
In this example, we solve the 1D Riemann problem with a non-convex Hamiltonian:
\begin{align}
\left\{ \begin{array}{ll}
\phi_{t}+\frac{1}{4} \left(\phi_{x}^2-1\right) \left( \phi_{x}^2-4 \right)=0, & -1\leq x\leq 1,\\
\phi^{0}(x)=-2|x|.\\
\end{array}
\right.
\end{align}
Inflow Dirichlet boundary conditions are imposed with $\phi(-1,t)=-2$ and $\phi(1,t)=-2$. 
Numerical results computed by the proposed schemes with different orders are shown in Figure \ref{Fig3}. In the simulation, we compute the solution up to $T = 1$ and use a mesh with $N=80$ grids.  We remark that, to capture the correct viscosity solution, we need to incorporate the WENO quadrature as well as nonlinear filter, which is justified by the  numerical evidence (Figure \ref{Fig3.4}). This example and Example 4.7 are the only examples in which  the nonlinear WENO quadrature and filter are required.
In addition, we plot the numerical solutions with CFL$=10$, to demonstrate the unconditional stability property of our schemes. 
We also would like to remark that one can save computational cost by choosing large CFLs, e.g. CFL=10, but the approximation quality deteriorates due to the corresponding large temporal error. If small CFLs are used, e.g, CFL=1, our numerical schemes are able to generate high quality numerical results, but the cost increases. The optimal choice of CFL is problem-dependent. 
%In practice, we suggest to choose CFL between 1 and 2 and such a choice qualitatively balances the cost and the accuracy in our tests.  
% we show the results with $N=800$ grid points and  $CFL=10$ as well in Figure \ref{3.5}
\end{exa}

\begin{figure}
	\centering
	\subfigure[$k=1$. $\beta=2$.]{
		\includegraphics[width=0.4\textwidth]{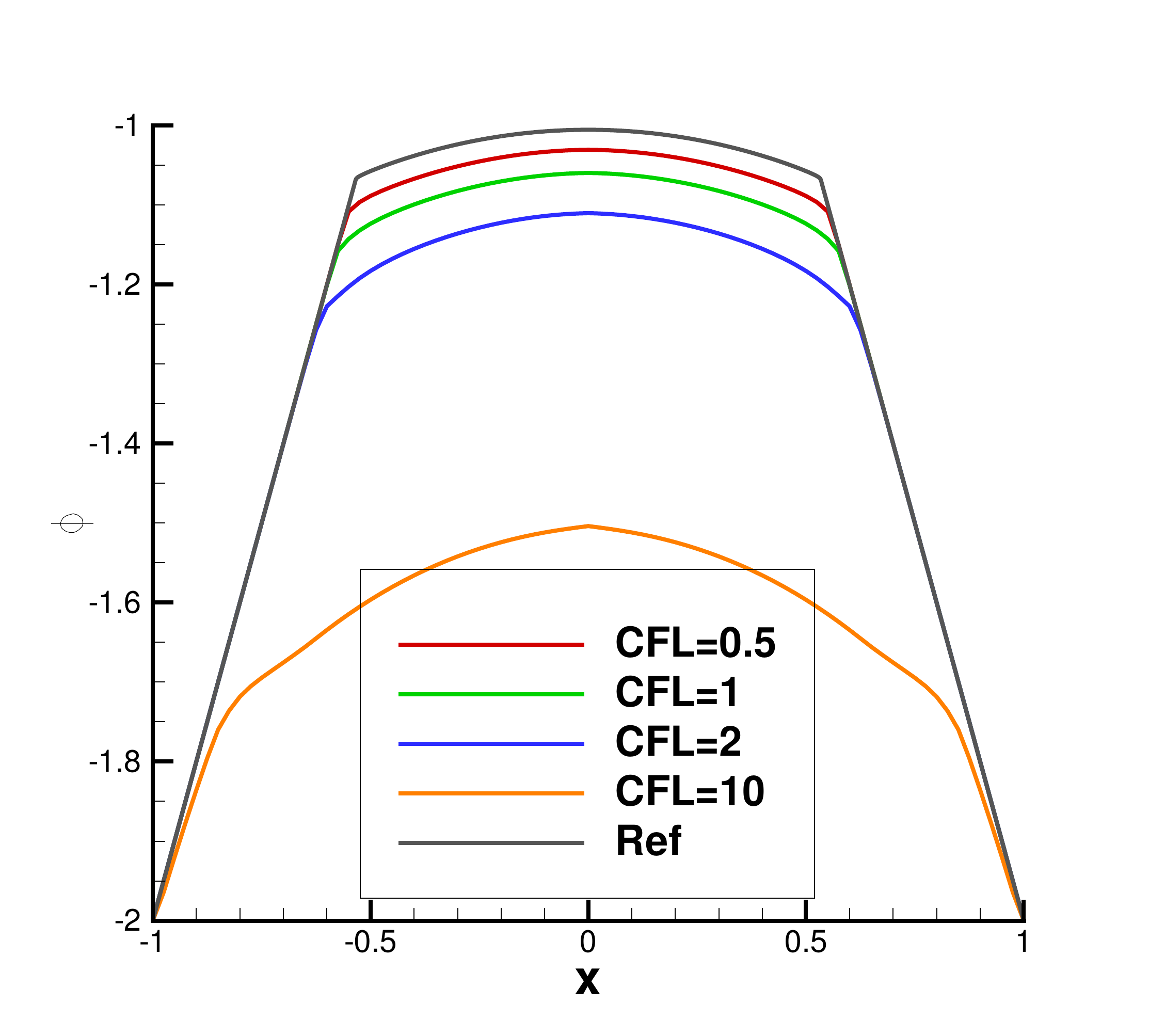}\label{Fig3.1}}
	\subfigure[$k=2$. $\beta=1$. ]{
		\includegraphics[width=0.4\textwidth]{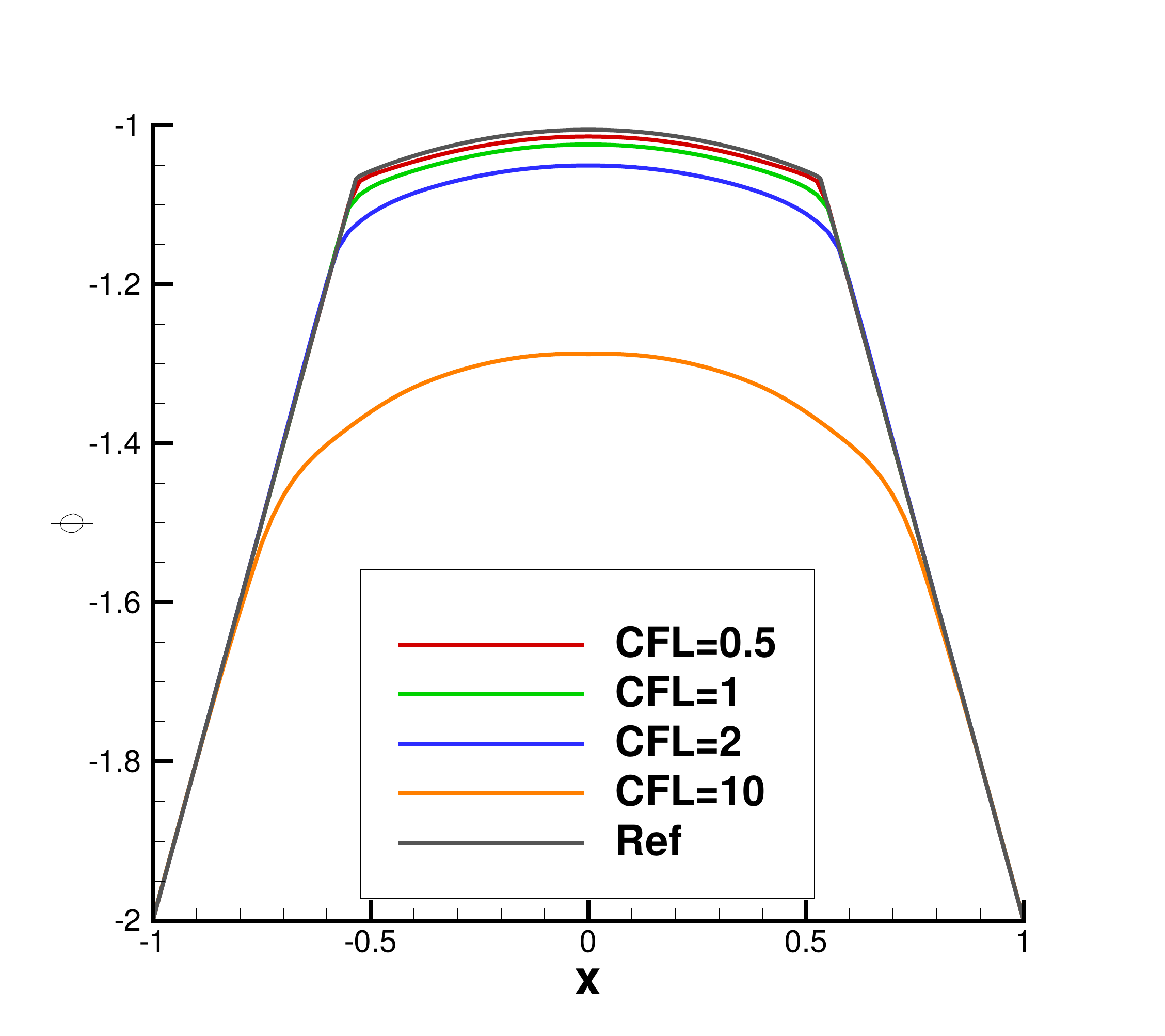}\label{Fig3.2}}\\
	\subfigure[$k=3$. $\beta=1.2$.]{
		\includegraphics[width=0.4\textwidth]{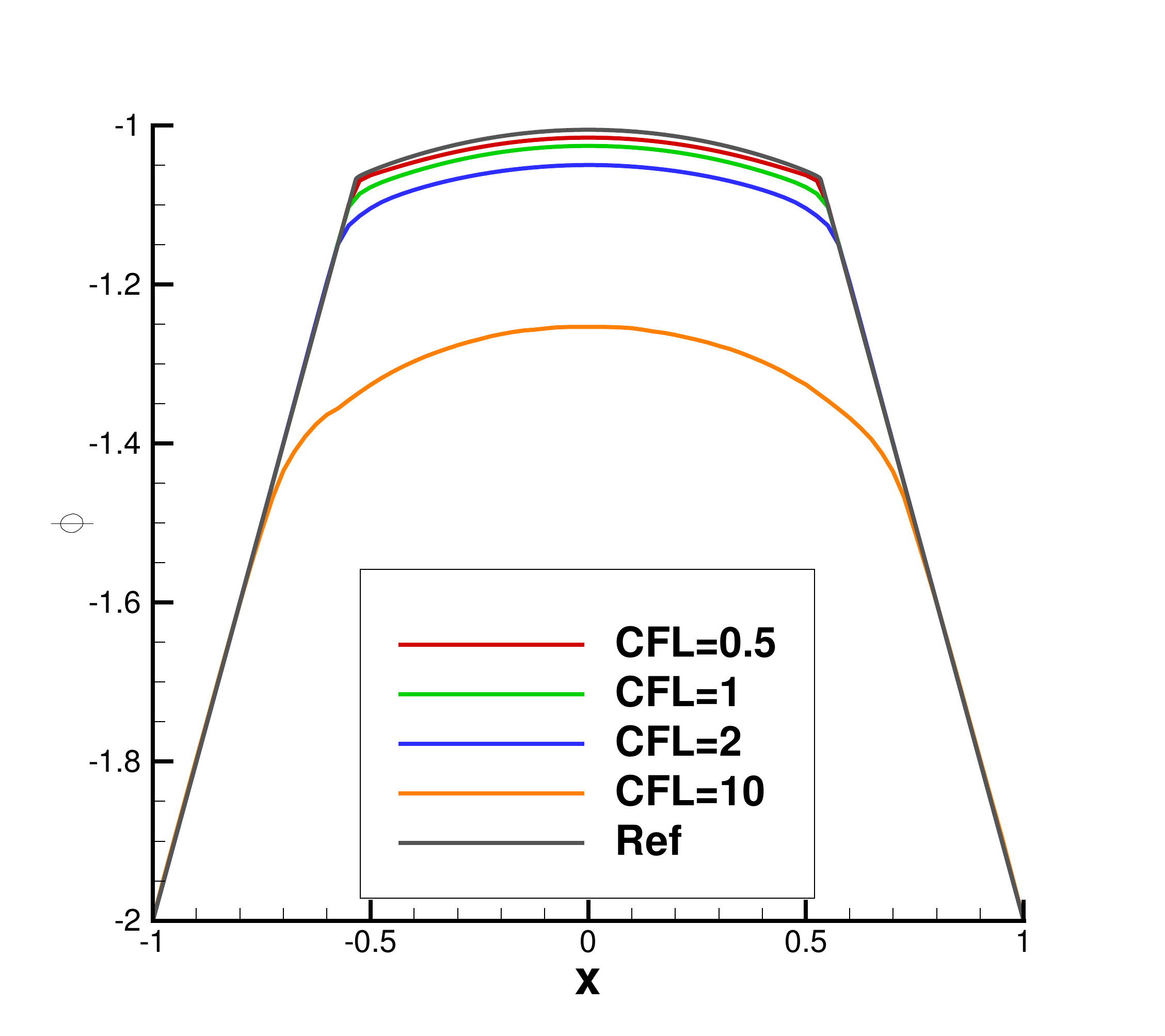}\label{Fig3.3}}
	\subfigure[Comparison. ]{
		\includegraphics[width=0.4\textwidth]{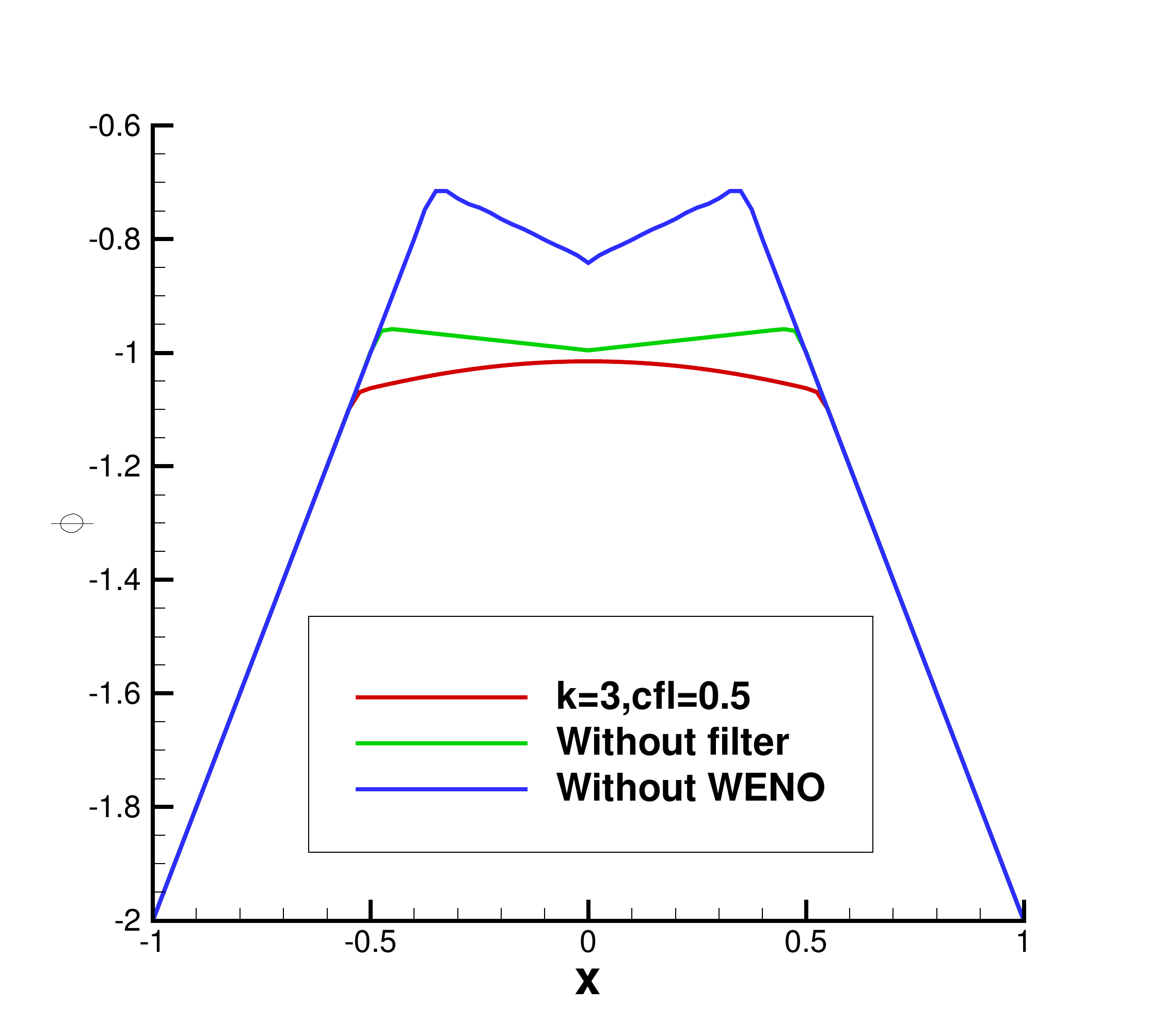}\label{Fig3.4}}
%		\subfigure[$N=800$. ]{
%		\includegraphics[width=0.3\textwidth]{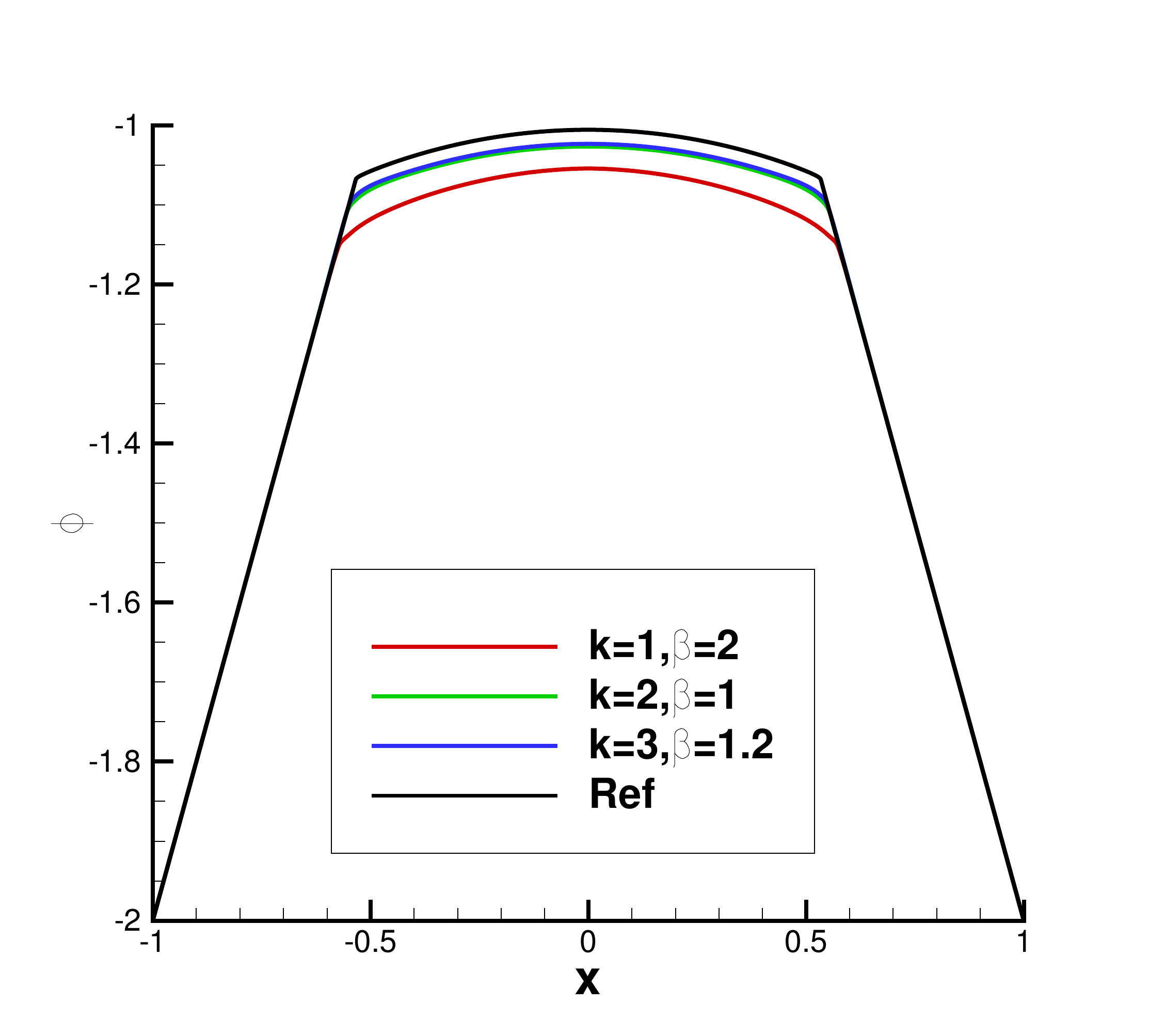}\label{Fig3.5}}
	\caption{\em  Example \ref{ex3}: 1D Riemann problem with a non-convex Hamiltonian. $T=1$.  $N=80$ grid points. }
	\label{Fig3}
\end{figure}

\begin{exa}\label{ex4}
 Now, we consider the 2D Burgers' equation
\begin{align}
\left\{\begin{array}{ll}
\phi_{t}+\frac{1}{2}(\phi_{x}+\phi_{y}+1)^2=0, & -2\leq x,y \leq 2,\\
\phi^{0}(x,y)=-\cos(\pi(x+y)/2),\\
\end{array}
\right.
\end{align}
with a $4$-periodic boundary condition in each direction. When $T=0.5/\pi^2$, the solution is smooth. We report the $L_\infty$ error and orders of accuracy in Table \ref{tab3} on uniform meshes. It is observed that our schemes can achieve the designed order accuracy. In Figure \ref{Fig4}, we plot the numerical solutions at time $T=1.5/\pi^2$, when discontinuous derivative has developed. Again, our schemes are able to capture the non-smooth structures very well and generate high quality numerical results.
\end{exa}

\begin{table}[htb]
	\caption{\label{tab3}\em Example \ref{ex4}: $L_\infty$ errors and orders of accuracy for 2D Burgers' equation. $T=0.5/\pi^2$. }
	\centering
	\bigskip
	\begin{small}
		\begin{tabular}{|c|c|cc|cc|cc|}
			\hline
			\multirow{2}{*}{CFL} &  \multirow{2}{*}{$N_x\times N_y$} & \multicolumn{2}{c|}{$k=1$. $\beta=1$.} & \multicolumn{2}{c|}{$k=2$. $\beta=0.5$.} & \multicolumn{2}{c|}{$k=3$. $\beta=0.6$.}\\
			\cline{3-8}
			& &  error &   order  &  error &  order  &  error  & order  \\\hline	
			\multirow{5}{*}{0.5}  
			&   $20\times20$  &  1.573E-02  &      --      &  1.772E-02  &      --     &  5.397E-04  &     --     \\
			&   $40\times40$  &  7.939E-03  &   0.987  &  4.793E-03  &   1.887  &  4.399E-05  &  3.617  \\
			&   $80\times80$  &  4.087E-03  &   0.958  &  1.287E-03  &   1.897  &  4.631E-06  &  3.248  \\
			& $160\times160$  &  2.072E-03  &   0.980  &  3.309E-04  &   1.959  &  5.226E-07  &  3.148  \\
			& $320\times320$  &  1.046E-03  &   0.987  &  8.468E-05  &   1.966  &  6.014E-08  &   3.119 \\\hline                   	
			\multirow{5}{*}{1} 
			&	 $20\times20$  &  3.139E-02  &      --     &  6.300E-02  &       --     &  4.022E-03  &      --     \\
			&   $40\times40$  &  1.558E-02  &   1.011  &  1.803E-02  &   1.805  &  4.995E-04  &  3.010  \\
			&   $80\times80$  &  7.936E-03  &   0.973  &  4.834E-03  &   1.899  &  4.953E-05  &  3.334  \\
			& $160\times160$  &  4.122E-03  &  0.945  &  1.287E-03  &  1.909  &  5.000E-06  &  3.308  \\
			& $320\times320$  &  2.072E-03  &  0.992  &  3.310E-04  &  1.959  &  5.314E-07  &  3.234  \\\hline 
			\multirow{4}{*}{2}  
			&   $20\times20$  &  9.137E-02  &    --    &  1.916E-01  &    --    &  3.366E-02  &    --    \\
			&   $40\times40$  &  3.201E-02  &   1.513  &  6.448E-02  &   1.571  &  4.455E-03  &  2.917  \\
			&   $80\times80$  &  1.562E-02  &   1.035  &  1.811E-02  &   1.832  &  5.069E-04  &  3.136  \\
			& $160\times160$  &  8.021E-03  &  0.962  &  4.834E-03  &  1.905  &  4.987E-05  &  3.345  \\
			& $320\times320$  &  4.123E-03  &  0.960  &  1.287E-03  &  1.909  &  5.009E-06  &  3.316  \\\hline   
		\end{tabular}
	\end{small}
\end{table}

\begin{figure}
	\centering
	\subfigure[Surface. CFL=0.5. ]{
		\includegraphics[width=0.4\textwidth]{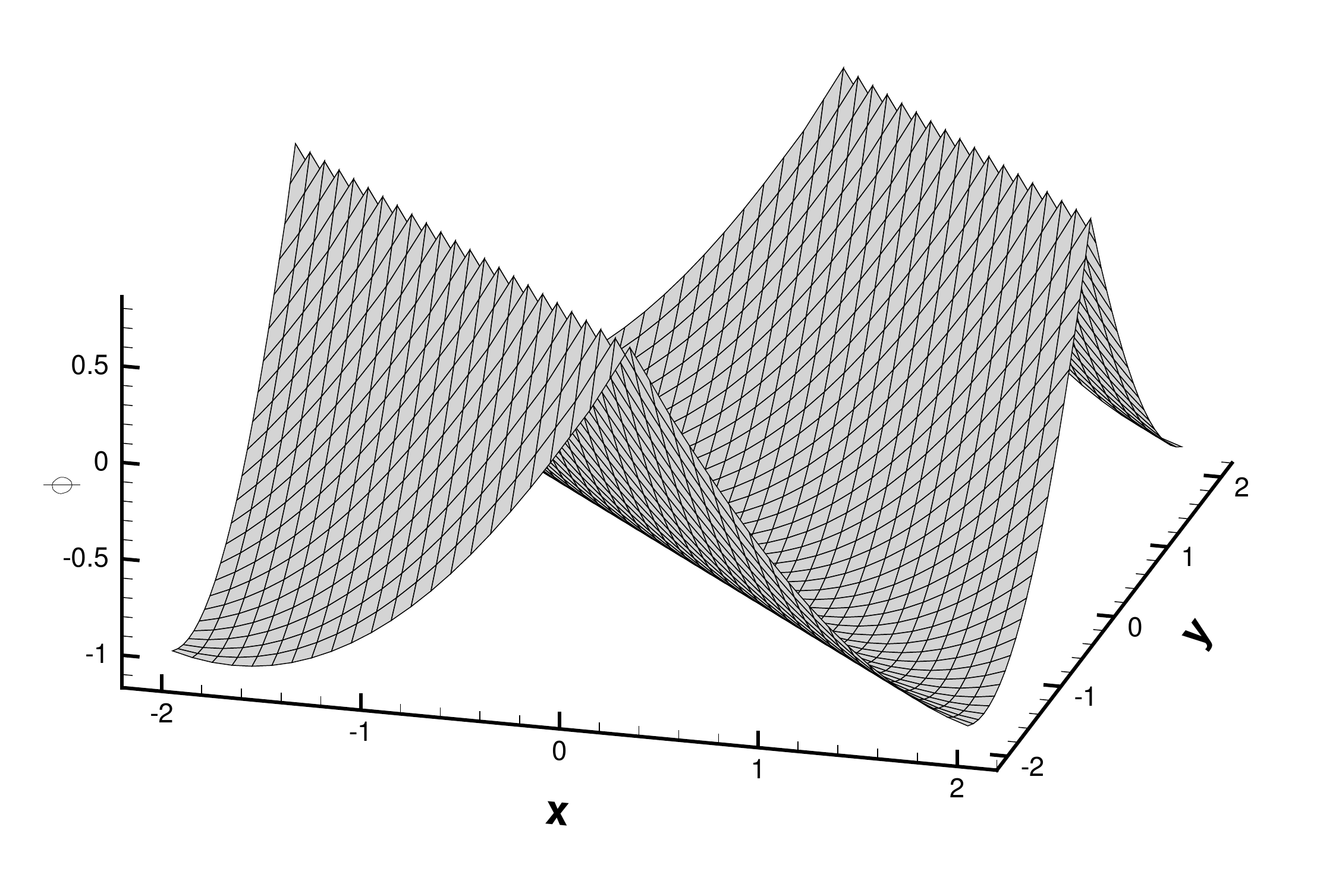}\label{Fig4.1}}
	\subfigure[Surface. CFL=2. ]{
		\includegraphics[width=0.4\textwidth]{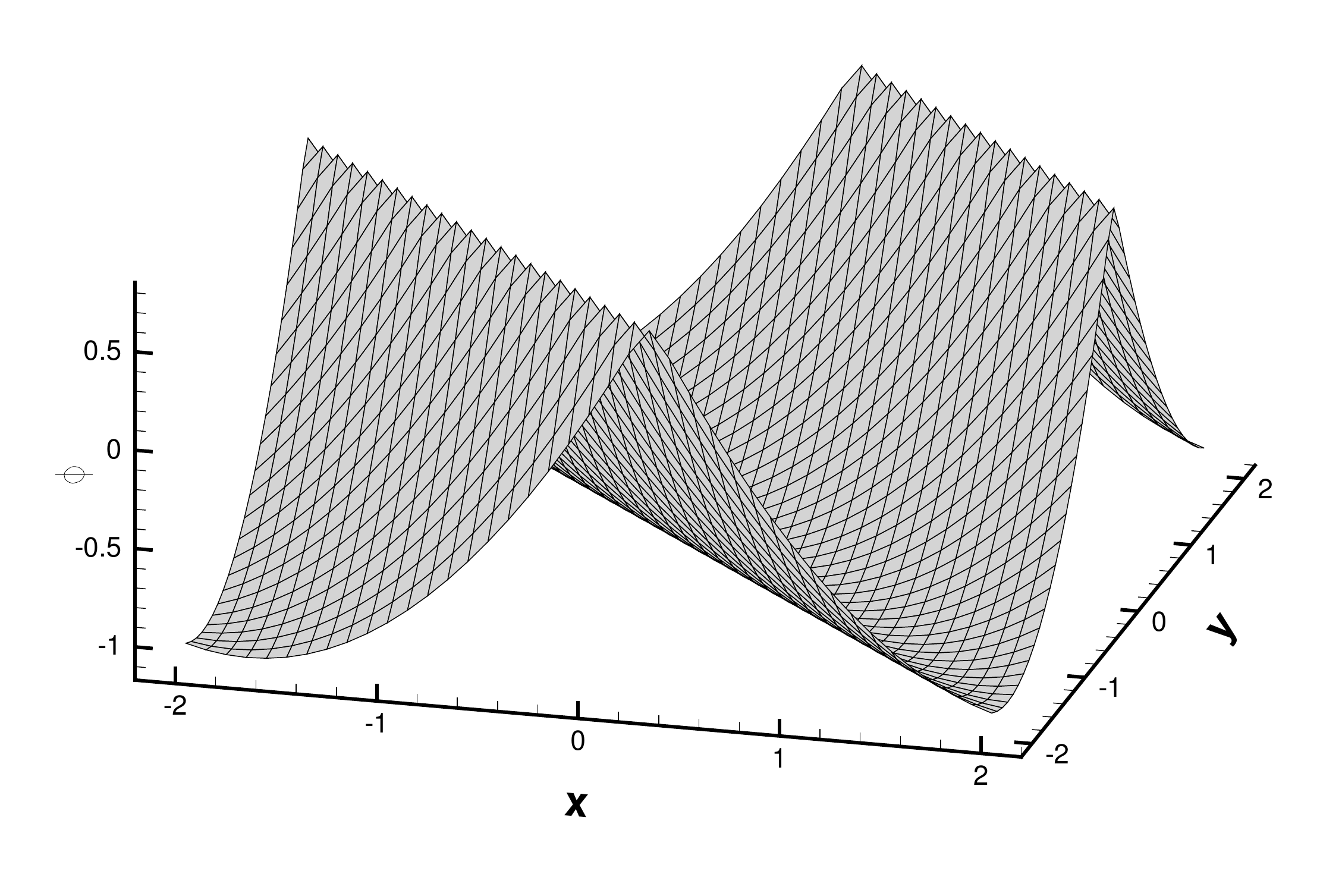}\label{Fig4.4}}
	\caption{\em  Example \ref{ex4}: 2D Burgers' equation. $T=1.5/\pi^2$. $40\times40$ grid points. $k=3$ with $\beta=0.6$. }
	\label{Fig4}
\end{figure}

\begin{exa}\label{ex5}
In this example, we solve the following 2D HJ equation with a non-convex Hamiltonian
\begin{align}
\left\{\begin{array}{ll}
\phi_{t}-\cos(\phi_{x}+\phi_{y}+1)=0, & -2\leq x,y\leq 2,\\
\phi^{0}(x,y)=-\cos(\pi (x+y)/2),\\
\end{array}
\right.
\end{align}
with a $4$-periodic boundary condition in each direction.
In table \ref{tab4},  we report the $L_\infty$ error and the orders of accuracy to demonstrate that the proposed schemes can achieve the designed order accuracy if the solution is smooth. In Figure \ref{Fig5}, the numerical solutions at $T=1.5/\pi^2$ are plotted. As with the former example, the schemes are observed to be able to capture the non-smooth solution structures without producing noticeable oscillations.
\end{exa}

\begin{table}[htb]
	\caption{\label{tab4} \em Example \ref{ex5}: Errors and orders of accuracy for 2D equation with a non-convex Hamiltonian. $T=0.5/\pi^2$.  }
	\centering
	\bigskip
	\begin{small}
		\begin{tabular}{|c|c|cc|cc|cc|}
			\hline
			\multirow{2}{*}{CFL} &  \multirow{2}{*}{$N_x\times N_y$} & \multicolumn{2}{c|}{$k=1$. $\beta=1$.} & \multicolumn{2}{c|}{$k=2$. $\beta=0.5$.} & \multicolumn{2}{c|}{$k=3$. $\beta=0.6$.}\\
			\cline{3-8}
			& &  error &   order  &  error &  order  &  error  & order  \\\hline
			\multirow{4}{*}{0.5}  
			&	 $40\times40$  &  1.917E-03  &       --     &  1.009E-03  &      --     &  3.379E-05  &      --      \\
			&	 $80\times80$  &  1.025E-03  &   0.902  &  2.594E-04  &   1.960  &  4.725E-06  &  2.838  \\
			&  $160\times160$  &  5.022E-04  &   1.030  &  6.667E-05  &   1.960  &  6.176E-07  &  2.935  \\
			&  $320\times320$  &  2.501E-04  &   1.006  &  1.705E-05  &   1.967  &  7.341E-08  &  3.073  \\\hline              
			\multirow{4}{*}{1}  
			&	 $40\times40$  &  6.198E-03  &      --      &  4.130E-03  &      --     &  2.842E-04  &       --     \\
			&	 $80\times80$  &  2.095E-03  &   1.565  &  1.023E-03  &   2.013  &  4.257E-05  &  2.739  \\
			&  $160\times160$  &  1.017E-03  &   1.043  &  2.594E-04  &   1.979  &  5.425E-06  &  2.972  \\
			&  $320\times320$  &  5.022E-04  &   1.018  &  6.668E-05  &   1.960  &  6.373E-07  &  3.090  \\\hline  
			\multirow{4}{*}{2} 
			&	 $40\times40$  &  6.410E-03  &       --     &  4.293E-03  &      --     &  2.978E-04  &      --      \\
			&	 $80\times80$  &  6.207E-03  &   0.046  &  4.172E-03  &   0.041  &  3.078E-04  & -0.048  \\
			&  $160\times160$  &  2.091E-03  &   1.570  &  1.023E-03  &   2.028  &  4.313E-05  &  2.835  \\
			&  $320\times220$  &  1.017E-03  &   1.040  &  2.595E-04  &   1.979  &  5.439E-06  &  2.987  \\\hline    
		\end{tabular}
	\end{small}
\end{table}

\begin{figure}
	\centering
	\subfigure[Surface. CFL=0.5. ]{
		\includegraphics[width=0.4\textwidth]{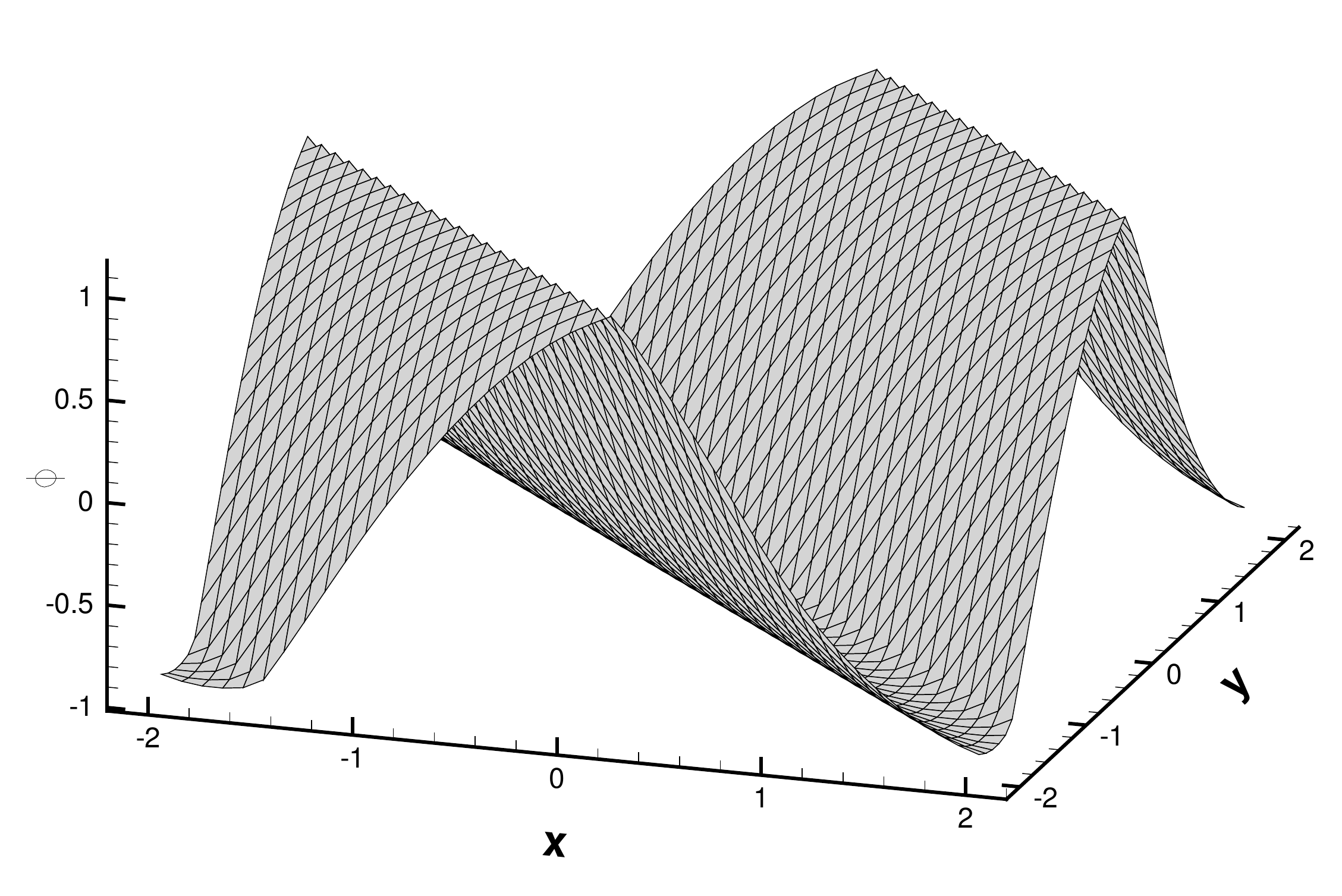}\label{Fig5.2}}
	\subfigure[Surface. CFL=2. ]{
		\includegraphics[width=0.4\textwidth]{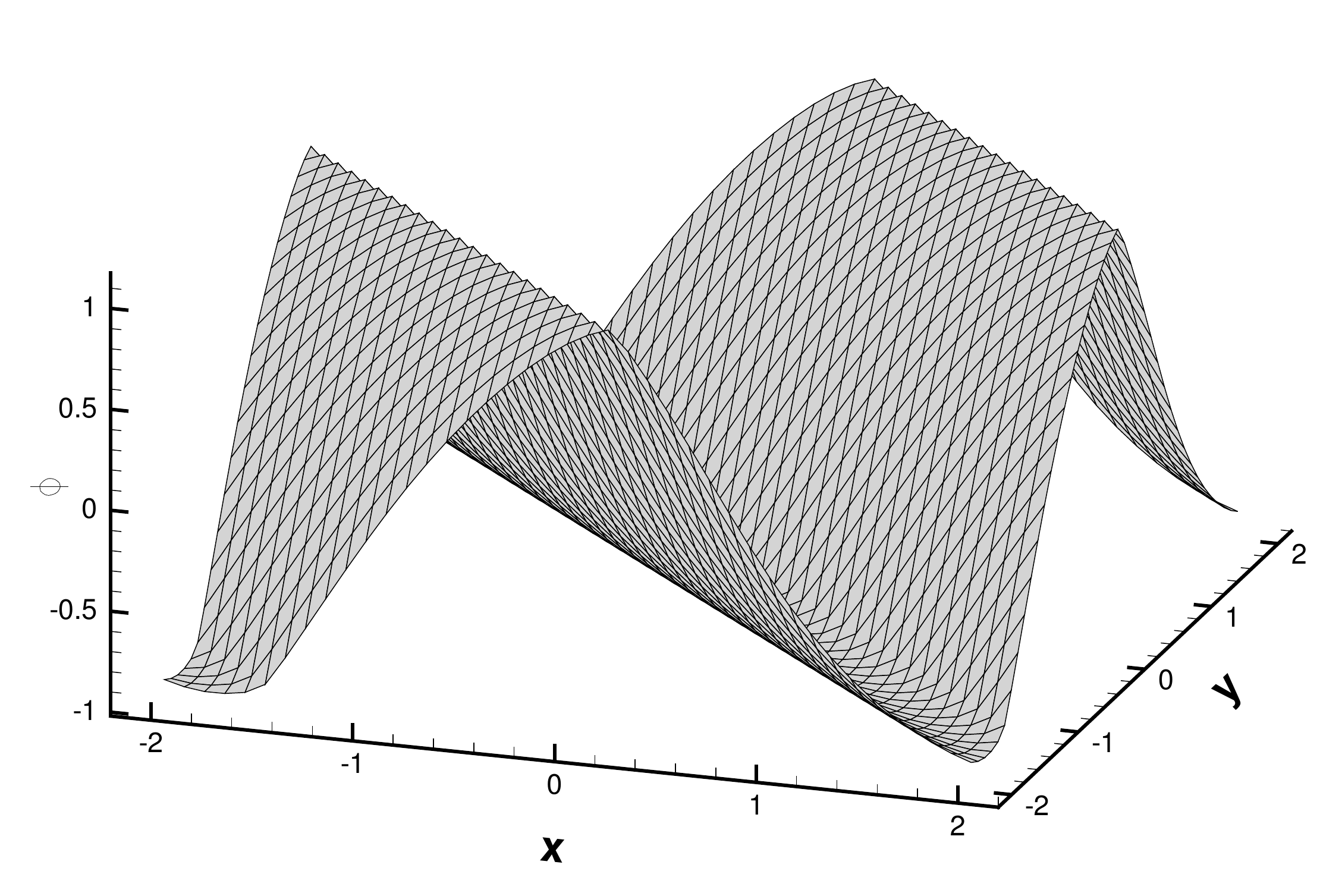}\label{Fig5.4}}
	\caption{\em  Example \ref{ex5}: 2D equation with a non-convex Hamiltonian. $T=1.5/\pi^2$. $40\times40$ grid points. $k=3$ with $\beta=0.6$. }
	\label{Fig5}
\end{figure}

\begin{exa}\label{ex6}
 We consider the 2D Riemann problem with a non-convex Hamiltonian
\begin{align}
\left\{ \begin{array}{ll}
\phi_{t} + \sin( \phi_{x}+\phi_{y} )=0, & -1\leq x,y,\leq 1\\
\phi(x,y,0)=\pi (|y|-|x|)\\
\end{array}
\right.
\end{align}
and an outflow boundary condition is imposed. 
We compute the solutions up to $T=1$. A uniform mesh with $N_{x}=N_{y}=80$ is used. We plot the numerical solutions in Figure \ref{Fig6}. It is observed that the proposed schemes are able to capture the viscosity solution accurately. In particular, the numerical result agrees well with those computed by other methods, see e.g., \cite{hu1999discontinuous,zhang2003high}. We remark that in this example, we should incorporate the robust WENO quadrature as well as the nonlinear filter to ensure the convergence towards the viscosity solution. Otherwise, the rarefaction wave is missing, and the numerical solution exhibits spurious oscillations, see Figure \ref{Fig6.5}-\ref{Fig6.6}.
\end{exa}

\begin{figure}
	\centering
	\subfigure[Contour. CFL=0.5. ]{
		\includegraphics[width=0.4\textwidth]{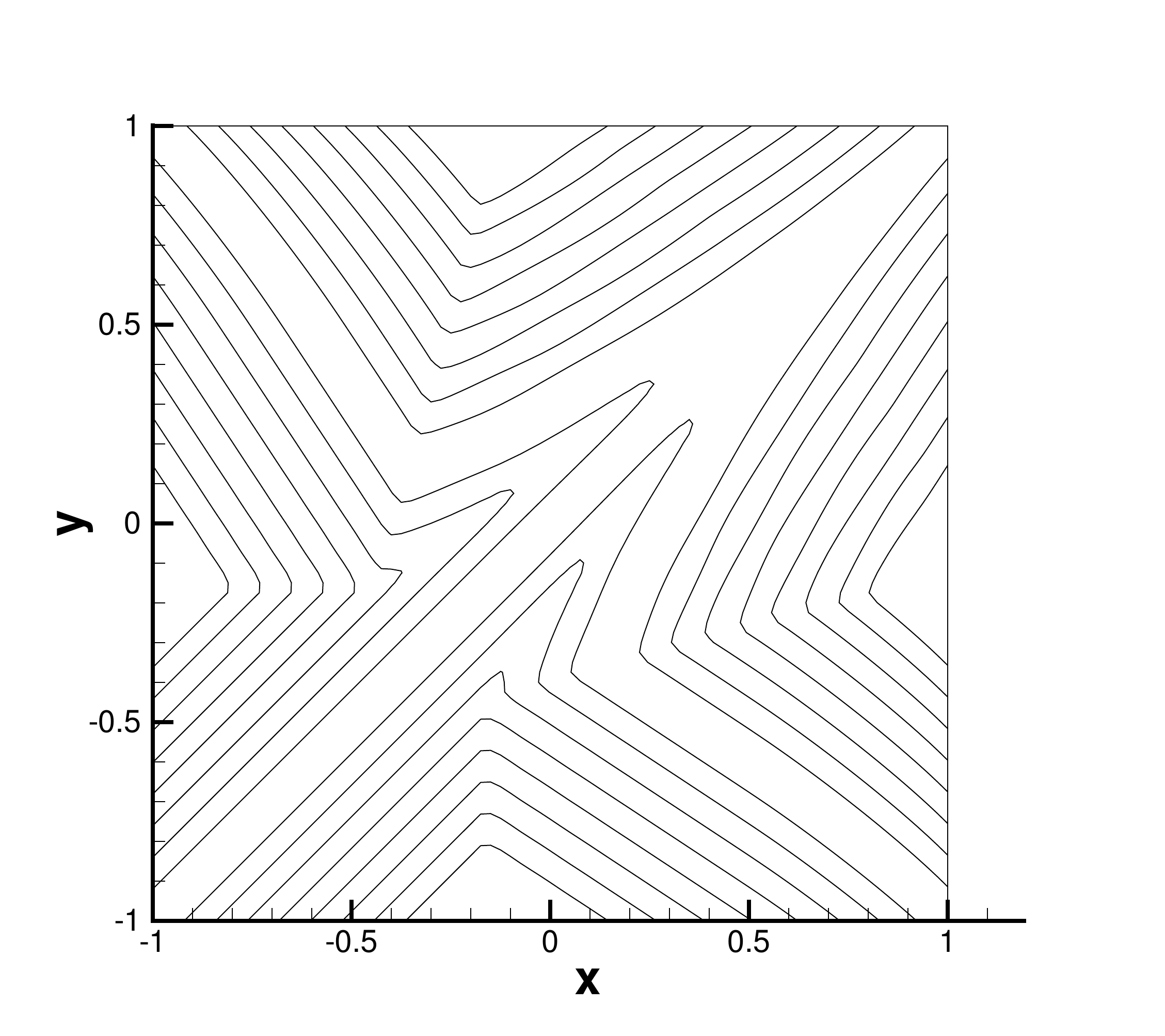}\label{Fig6.1}}
	\subfigure[Surface. CFL=0.5. ]{
		\includegraphics[width=0.4\textwidth]{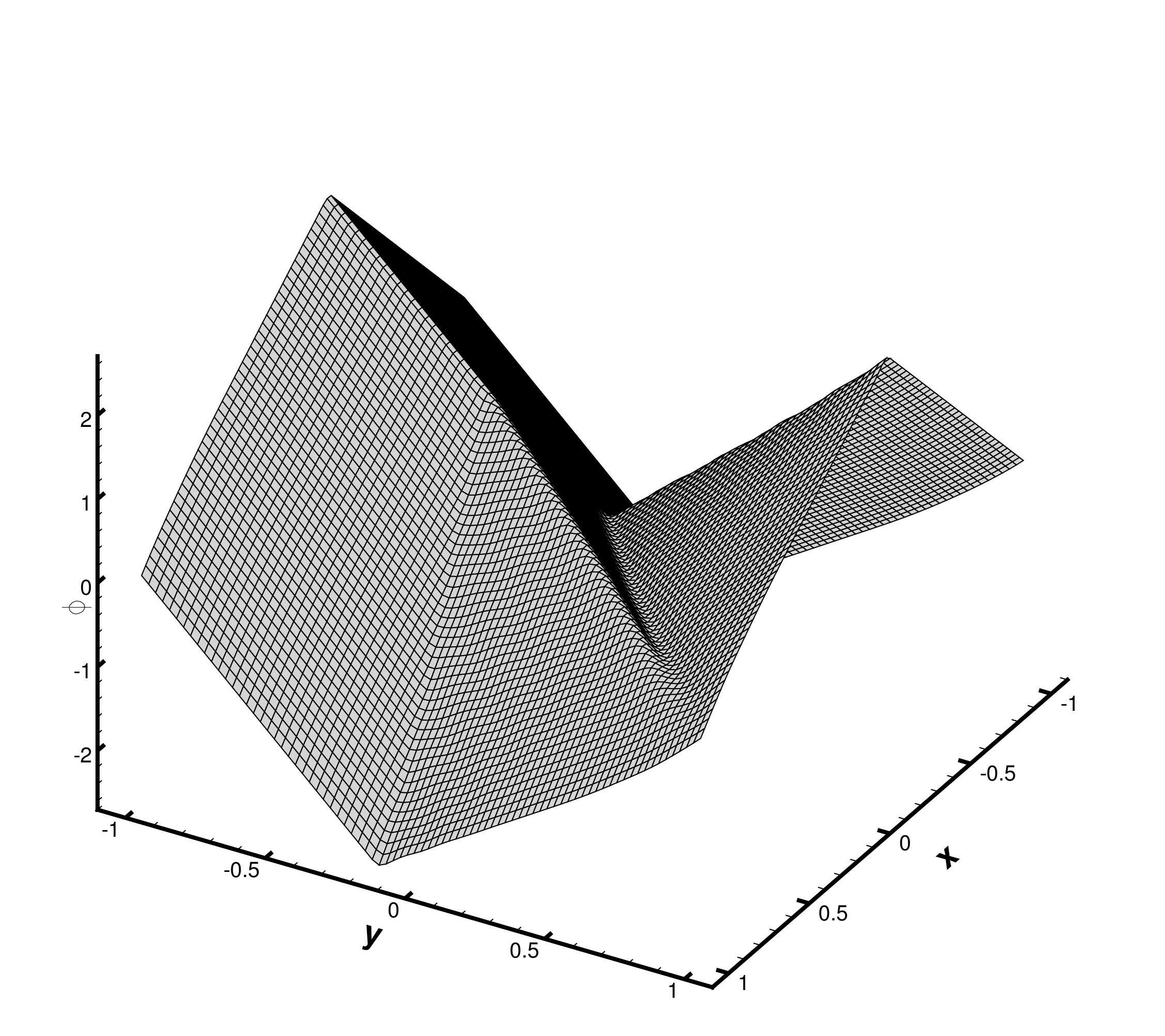}\label{Fig6.2}}
	\subfigure[Contour. CFL=2. ]{
		\includegraphics[width=0.4\textwidth]{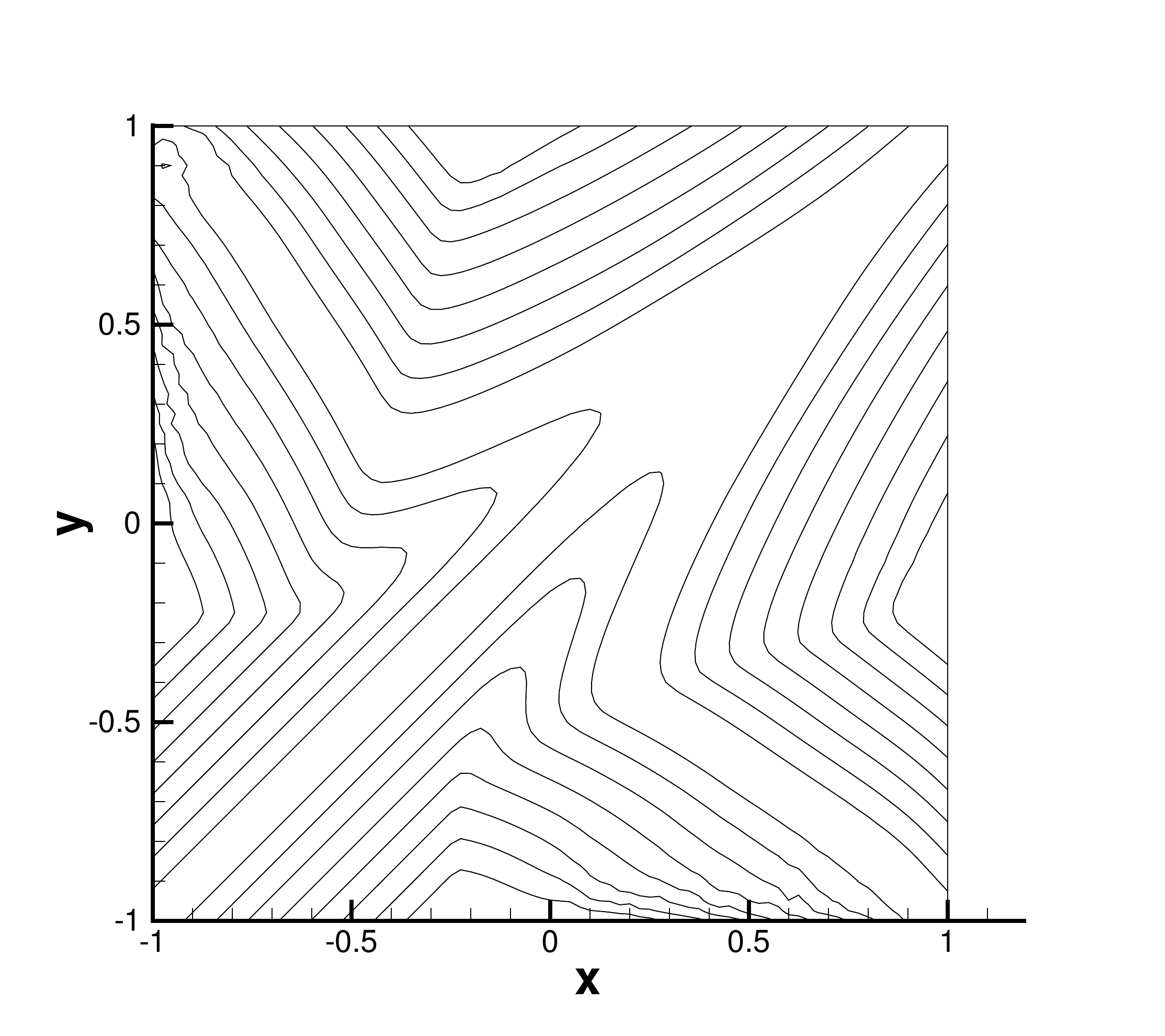}\label{Fig6.3}}
	\subfigure[Surface. CFL=2. ]{
		\includegraphics[width=0.4\textwidth]{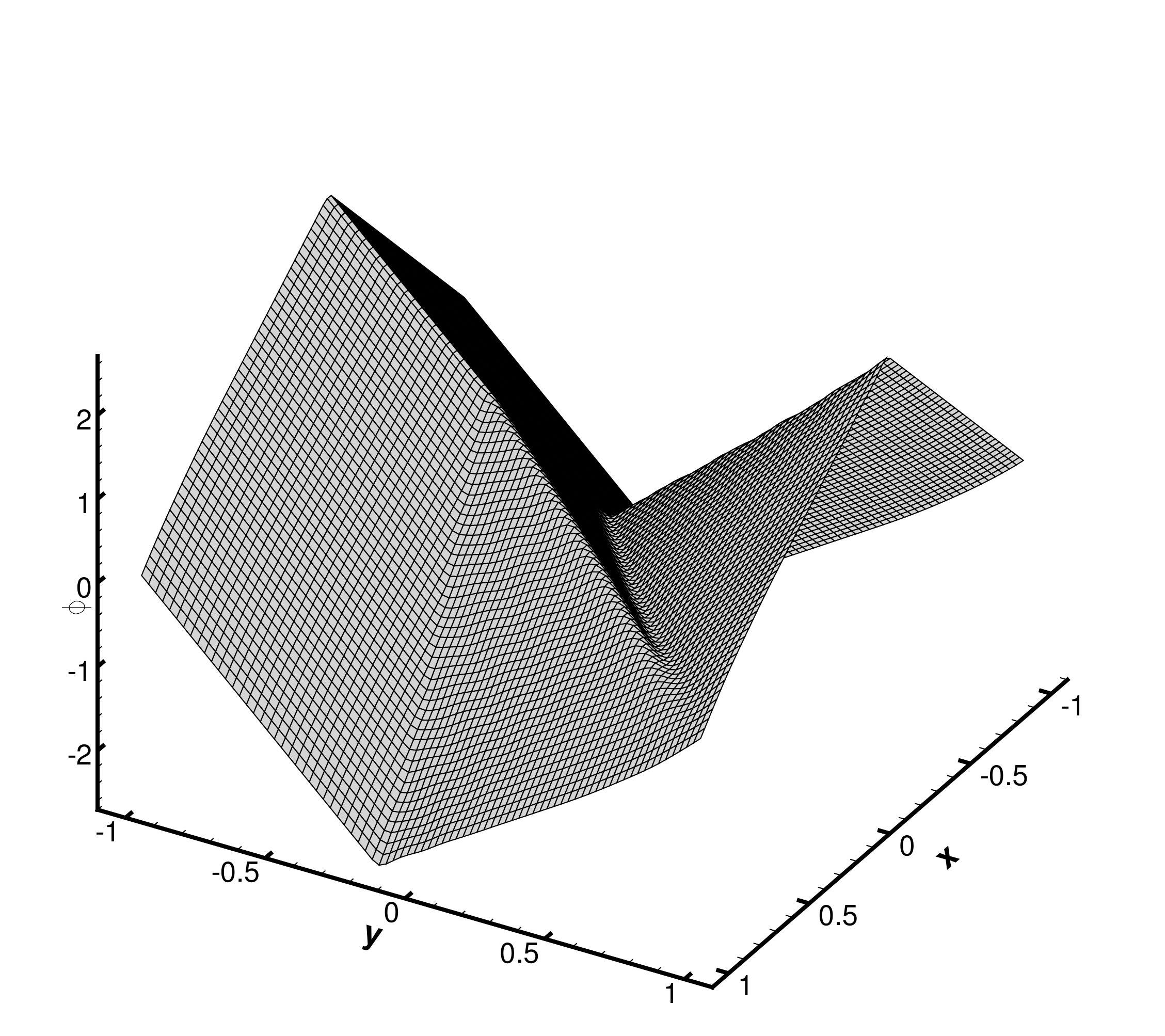}\label{Fig6.4}}
	\subfigure[Contour. Without WENO quadrature.]{
		\includegraphics[width=0.4\textwidth]{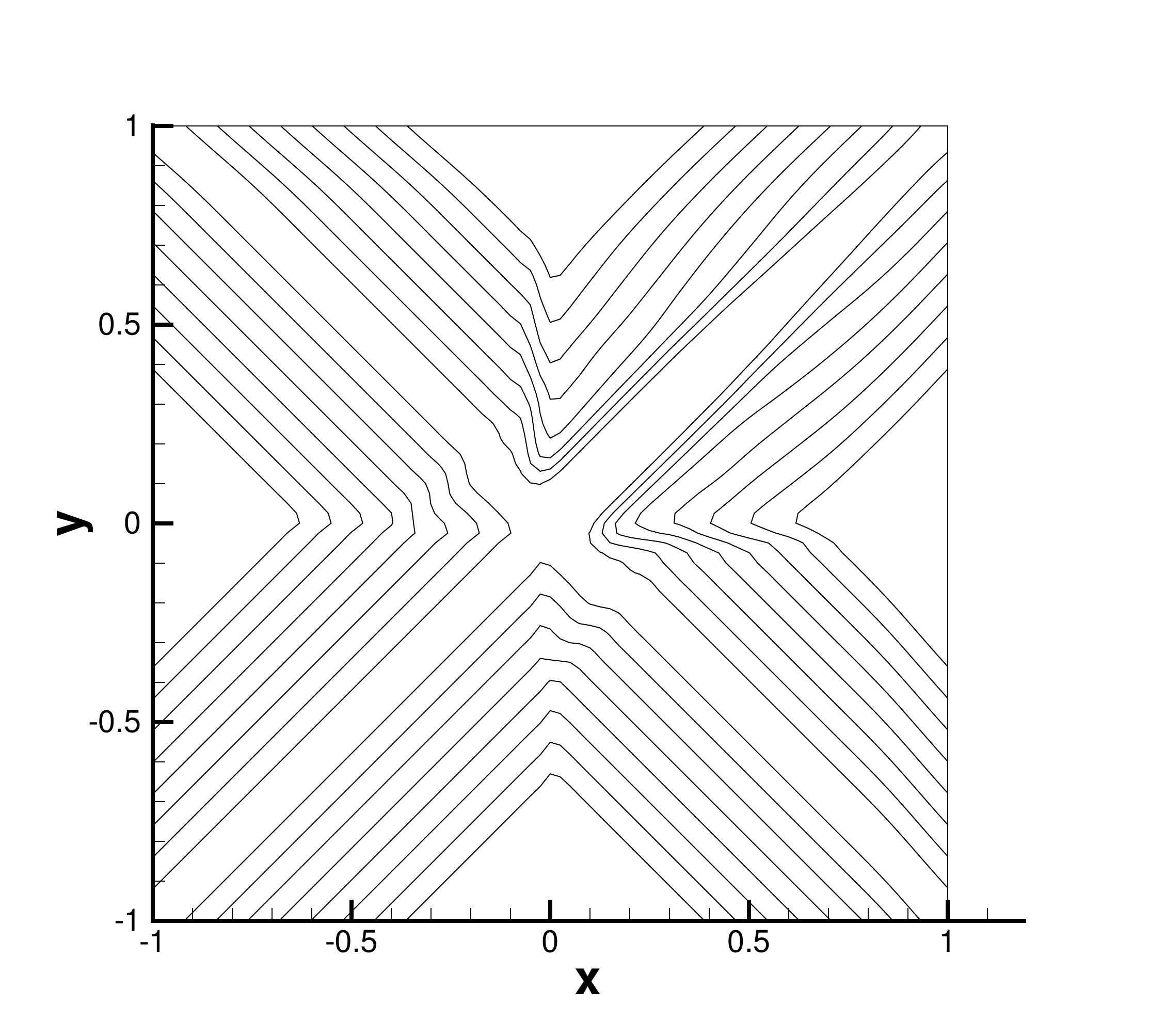}\label{Fig6.5}}
	\subfigure[Surface. Without WENO quadrature.]{
		\includegraphics[width=0.4\textwidth]{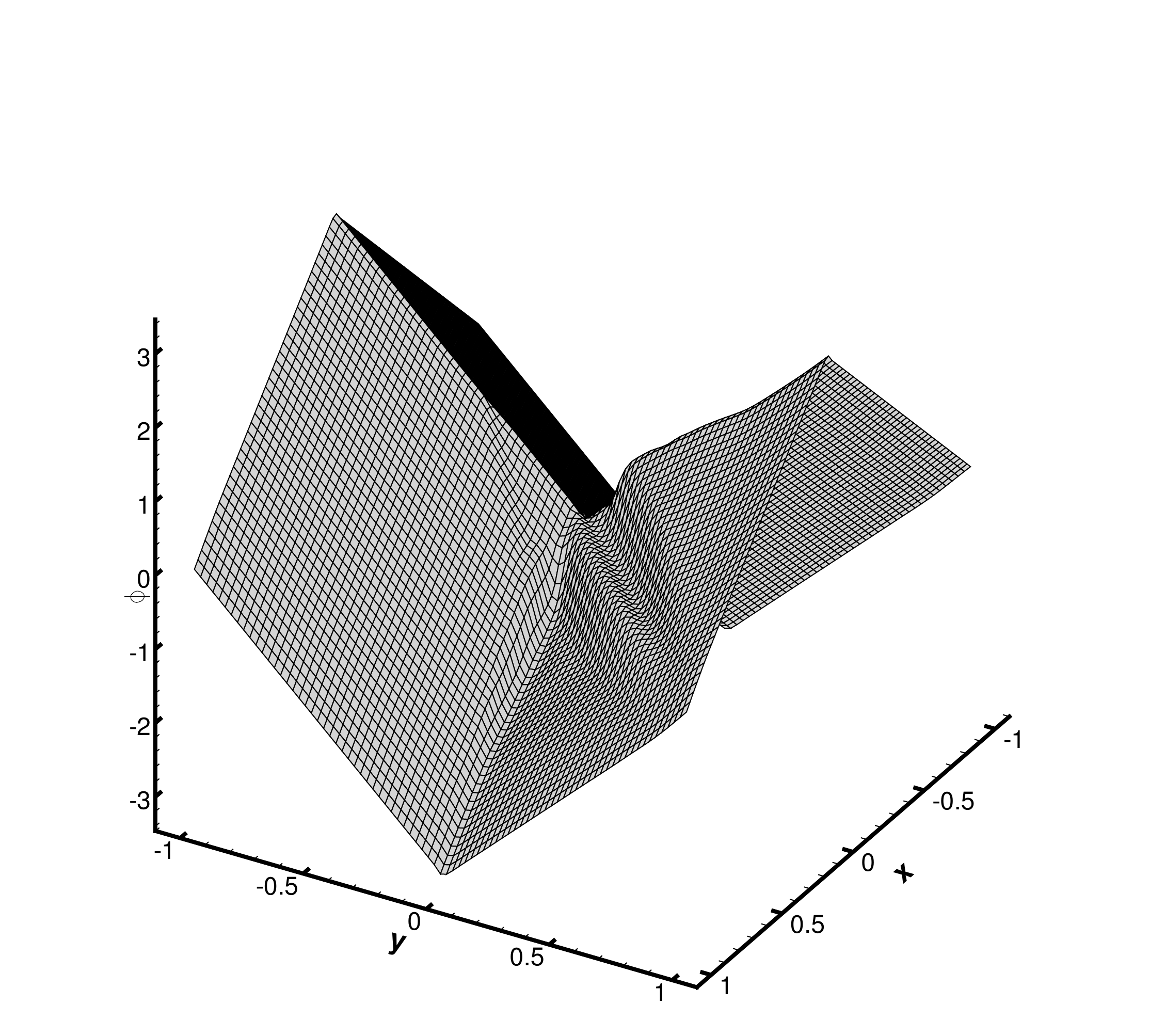}\label{Fig6.6}}
	\caption{\em  Example \ref{ex6}: 2D Riemann problem with a non-convex Hamiltonian. $T=1$. $80\times80$ grid points. $k=3$ with $\beta=0.6$. }
	\label{Fig6}
\end{figure}

\begin{exa}\label{ex7}
We consider solving the following problem from optimal control:
\begin{align}
\left\{ \begin{array}{ll}
\phi_{t} + \sin(y) \phi_{x}+(\sin(x)+sign(\phi_{y}))\phi_{y}-\frac{1}{2}\sin(y)^2-(1-\cos(x)) =0, & -\pi\leq x,y,\leq \pi\\
\phi^{0}(x,y)=0,\\
\end{array}
\right.
\end{align}
with periodic boundary conditions. We plot the numerical solutions at $T=1$ in Figure \ref{Fig7}. A uniform mesh with $N_{x}=N_{y}=60$ is used.  The optimal control $sign(\phi_y)$ which contains the most interesting information in the optimal control problem is shown as well. Again, our numerical schemes are able to  resolve the non-smooth solution structures very nicely.
\end{exa}

\begin{figure}
	\centering
	\subfigure[Solution. CFL=0.5. ]{
		\includegraphics[width=0.4\textwidth]{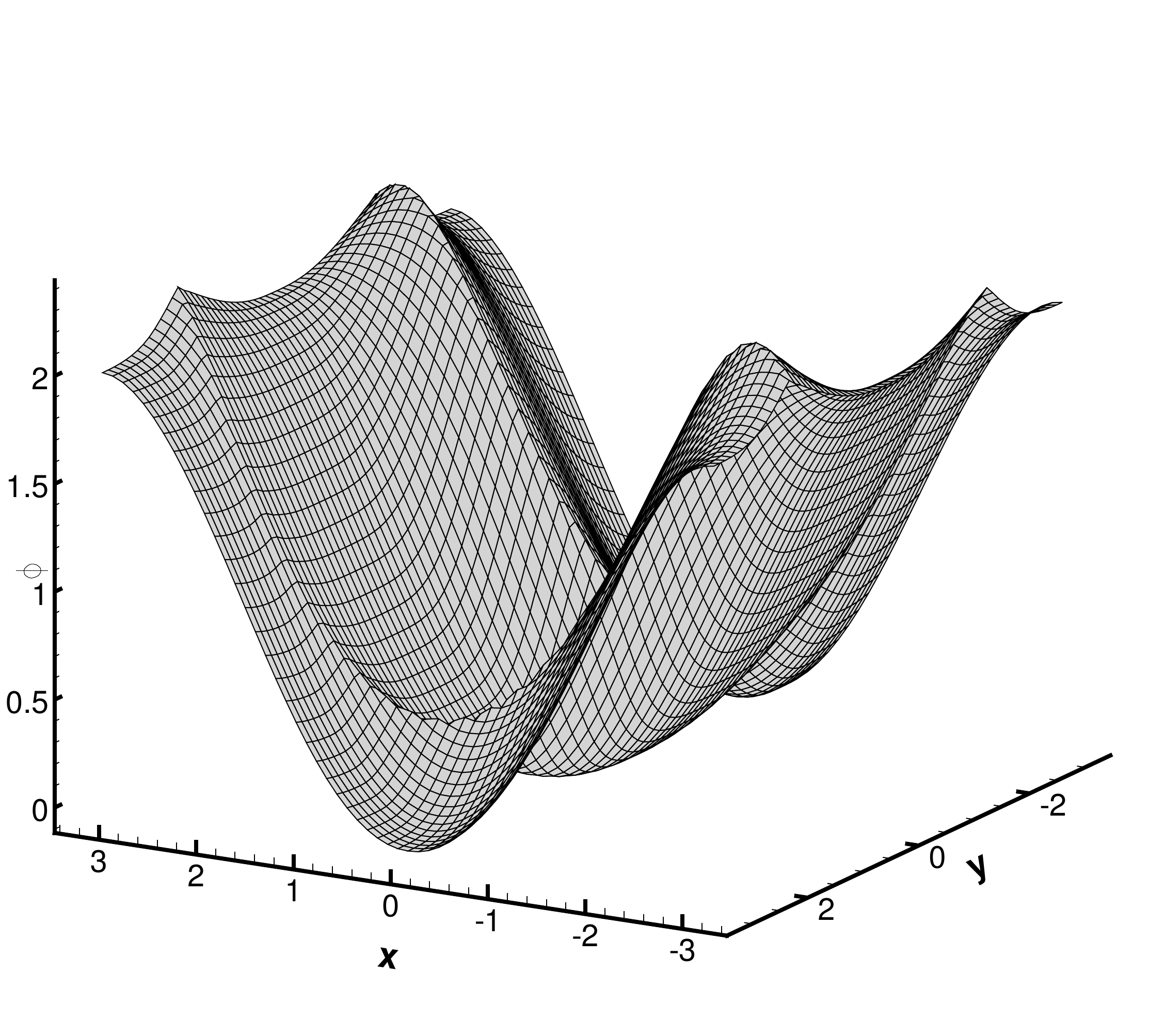}\label{Fig7.1}}
	\subfigure[Optimal control $sign(\phi_y)$. CFL=0.5. ]{
		\includegraphics[width=0.4\textwidth]{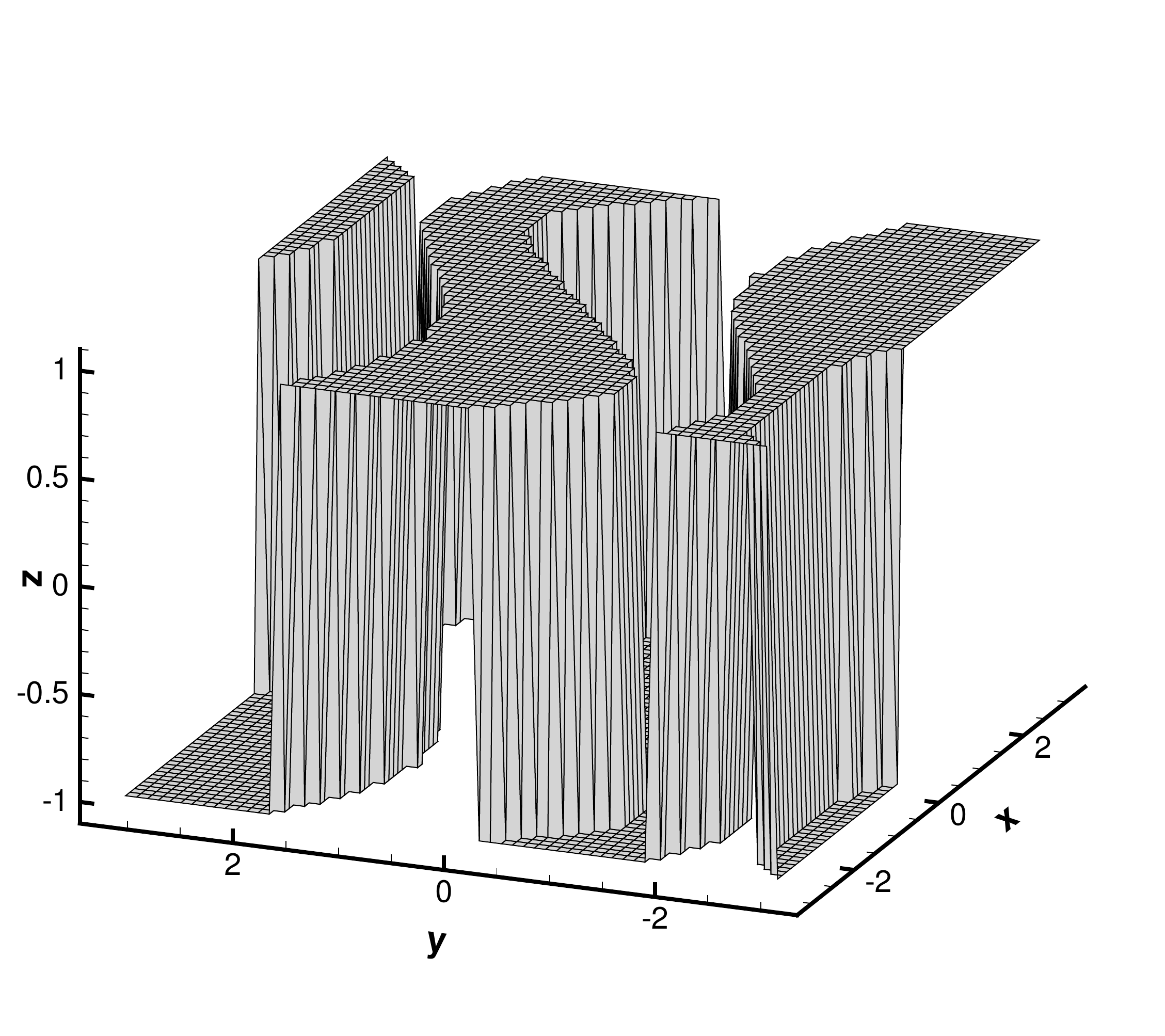}\label{Fig7.2}}
	\subfigure[Solution. CFL=2. ]{
		\includegraphics[width=0.4\textwidth]{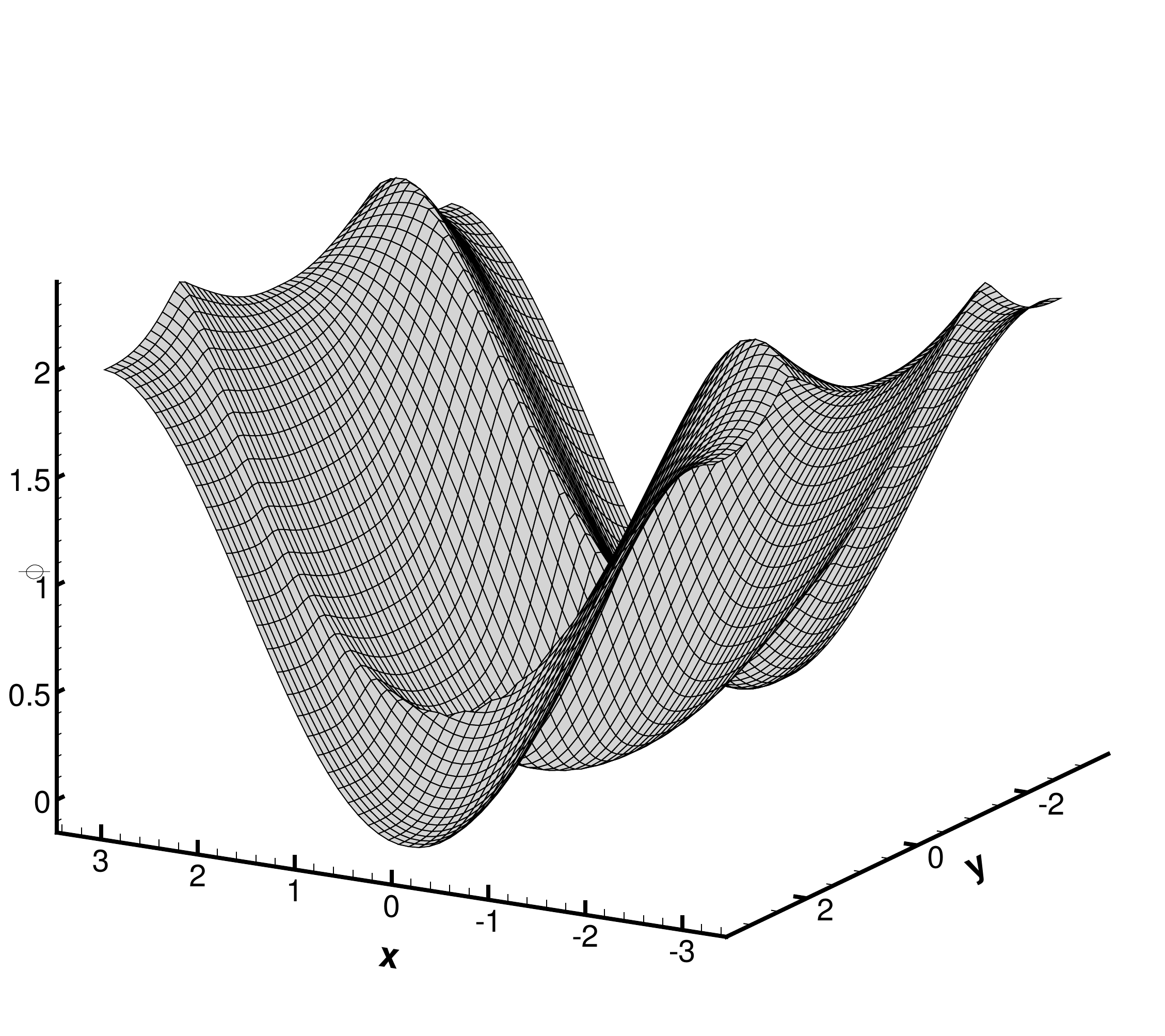}\label{Fig7.3}}
	\subfigure[Optimal control $sign(\phi_y)$. CFL=2. ]{
		\includegraphics[width=0.4\textwidth]{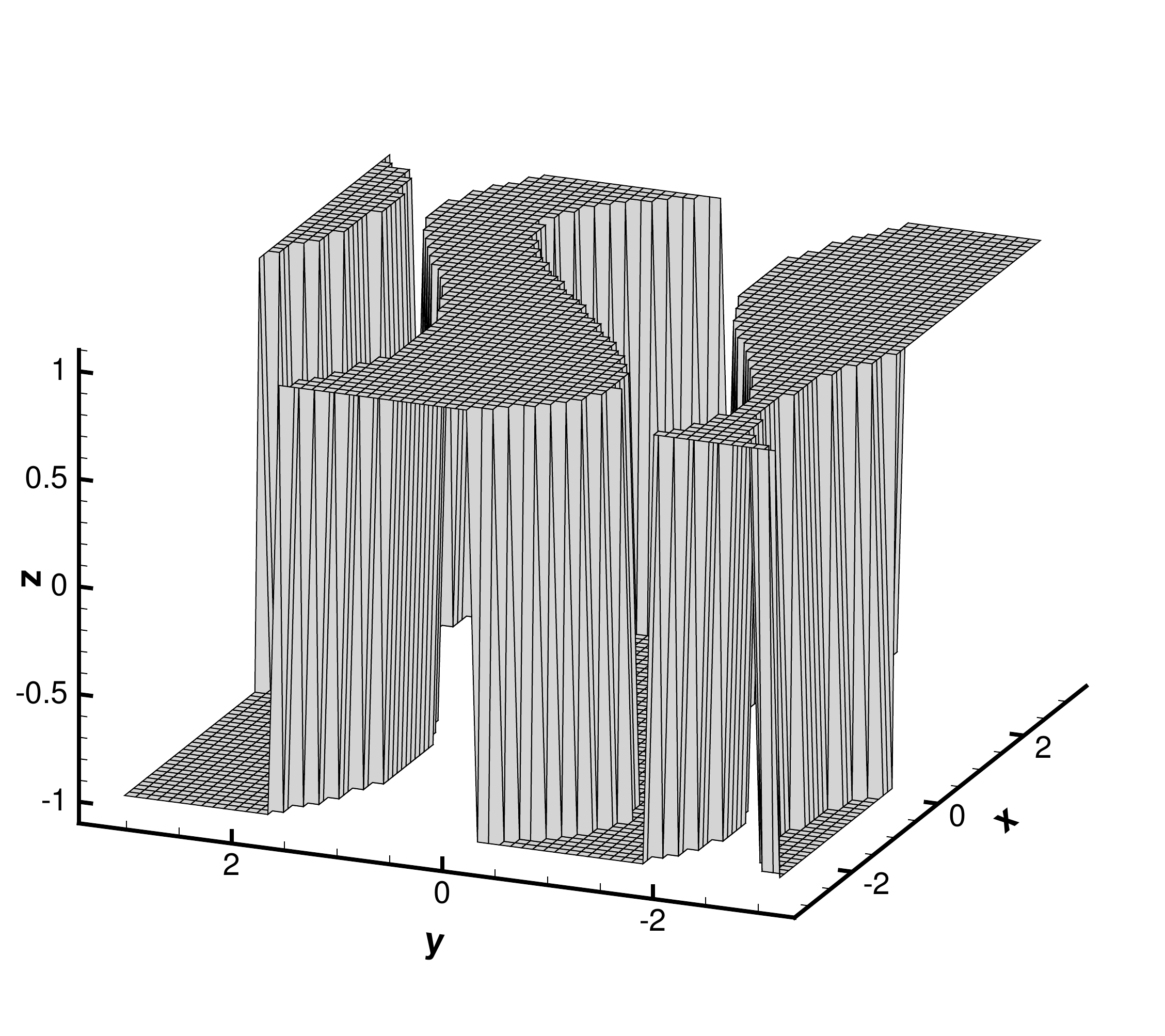}\label{Fig7.4}}
	\caption{\em  Example \ref{ex3}: 2D Control problem. $T=1$. $60\times60$ grid points. $k=3$ with $\beta=0.6$. }
	\label{Fig7}
\end{figure}

\begin{exa}\label{ex8}
In this example, we simulate a propagating surface on a rectangular domain by solving the following equation 
\begin{align}
\label{eq:propagting}
\left\{\begin{array}{ll}
\phi_{t} - (1-\epsilon K)\sqrt{\phi_{x}^2+\phi_{y}^2+1} =0, & 0\leq x, y\leq 1\\
\phi^{0}(x,y) = 1-\frac{1}{4}(\cos(2\pi x)-1)(\cos(2\pi y)-1)\\
\end{array}
\right.
\end{align}
where $K$ is the mean curvature defined by
\begin{align}
K=-\frac{\phi_{xx}(1+\phi_y^2)-2\phi_{xy}\phi_x\phi_y+\phi_{yy}(1+\phi_{x}^2)} {(\phi_{x}^2+\phi_{y}^2+1)^{3/2}},
\end{align}
and $\epsilon$ is a small constant. Periodic boundary conditions are imposed. Note that if $\epsilon\neq0$, we need to discretize the second derivatives $u_{xx}$, $u_{xy}$ and $u_{yy}$ as well. The approach proposed in our previous work \cite{christlieb2017kernel} is employed for the second derivative discretization. In the simulation, a uniform mesh with $N_{x}=N_{y}=60$ is used.
In Figures \ref{Fig8a}-\ref{Fig8b}, we plot the snapshots of numerical solutions for $\epsilon=0$ (pure convection) at $t=0,\,0.3,\,0.6,\,0.9$ and for $\epsilon=0.1$ at $t=0,\,0.1,\,0.3,\,0.6$, respectively.
\end{exa}

\begin{figure}
	\centering
	\subfigure[Propagating surface. CFL=0.5. ]{
		\includegraphics[width=0.4\textwidth]{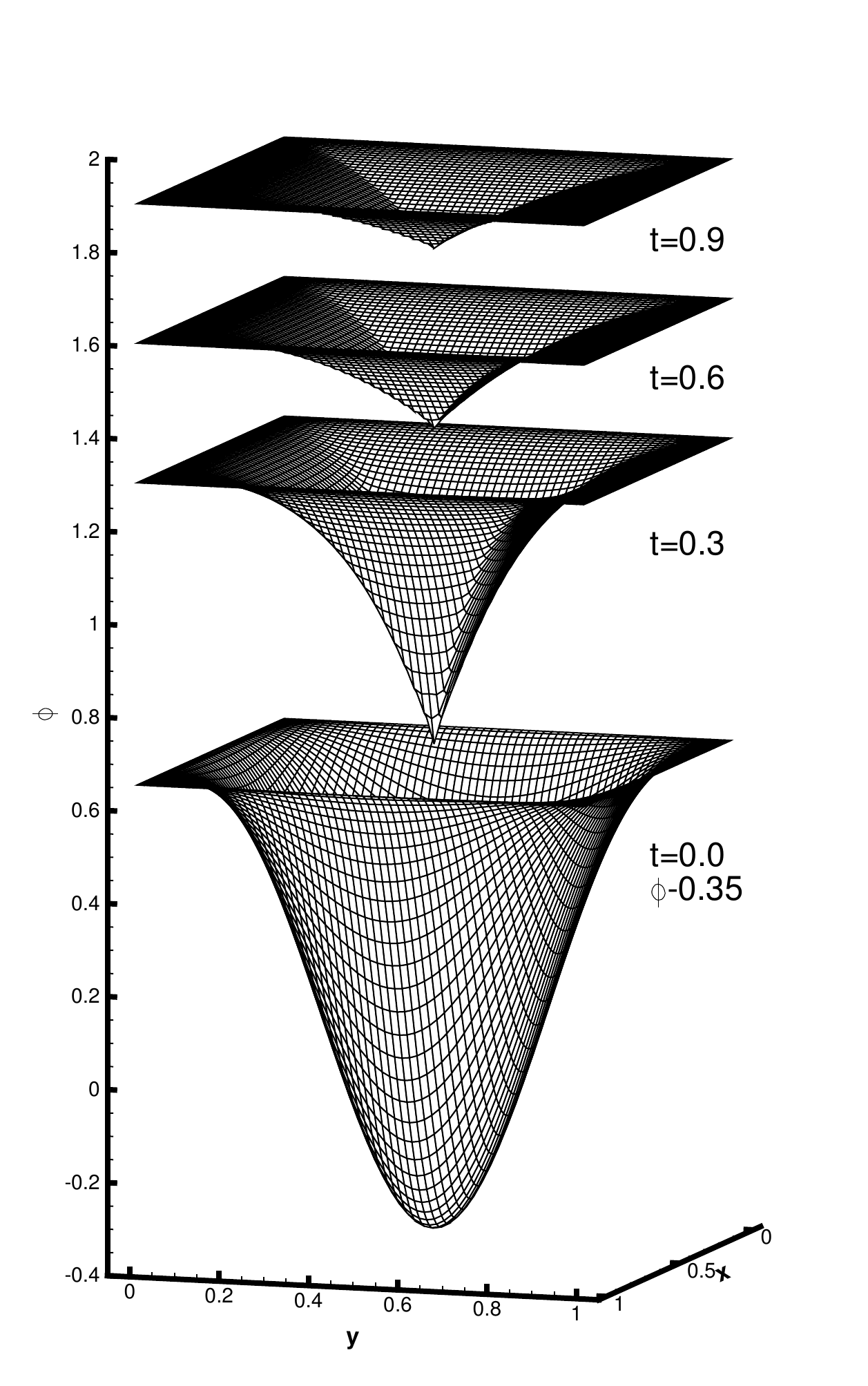}\label{Fig8a.1}}
	\subfigure[Propagating surface. CFL=2. ]{
		\includegraphics[width=0.4\textwidth]{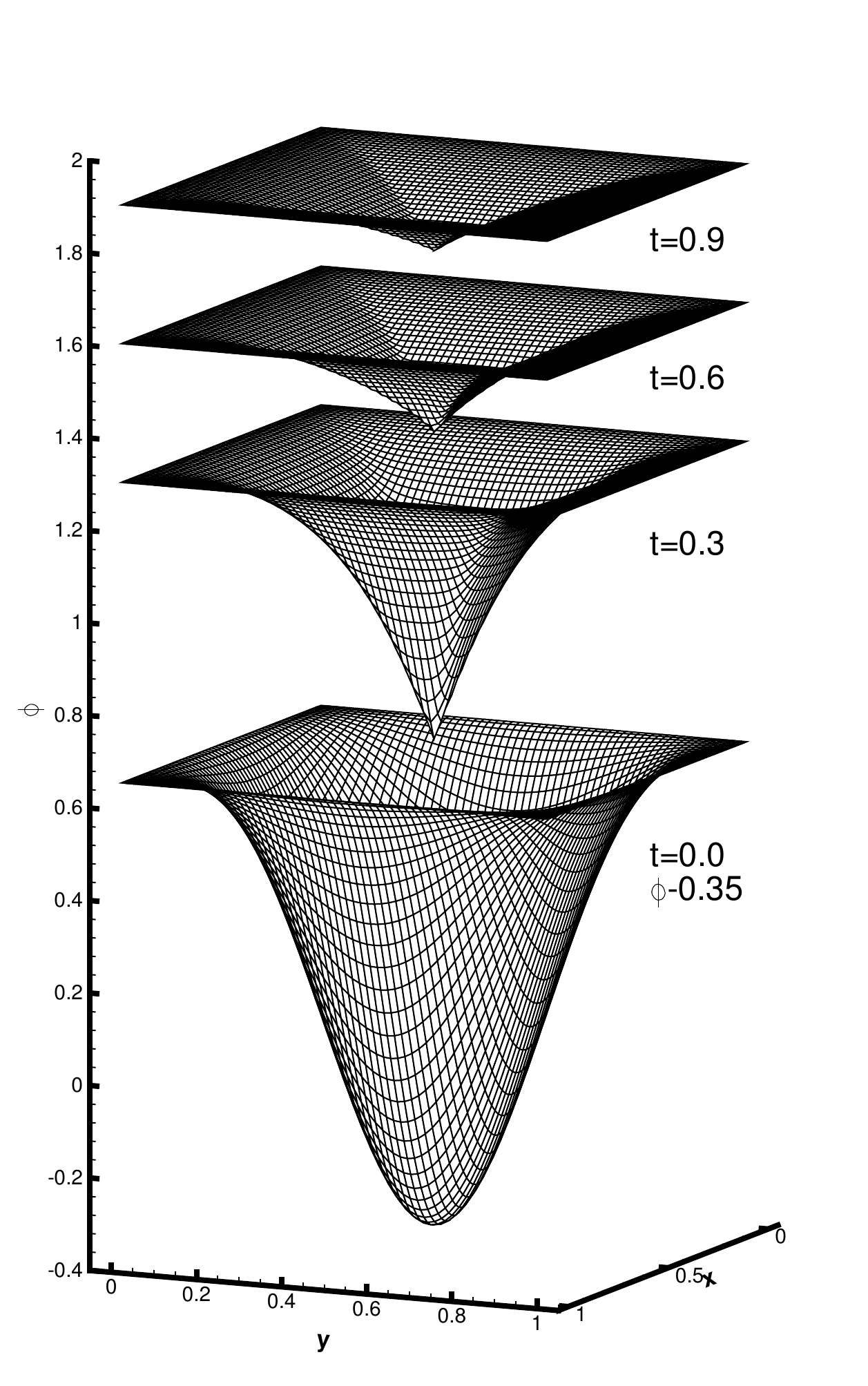}\label{Fig8a.2}}
	\caption{\em  Example \ref{ex8}: propagating surface. $\epsilon=0$. $60\times60$ grid points. $k=3$ with $\beta=0.6$. }
	\label{Fig8a}
\end{figure}

\begin{figure}
	\centering
	\subfigure[Propagating surface. CFL=0.5. ]{
		\includegraphics[width=0.4\textwidth]{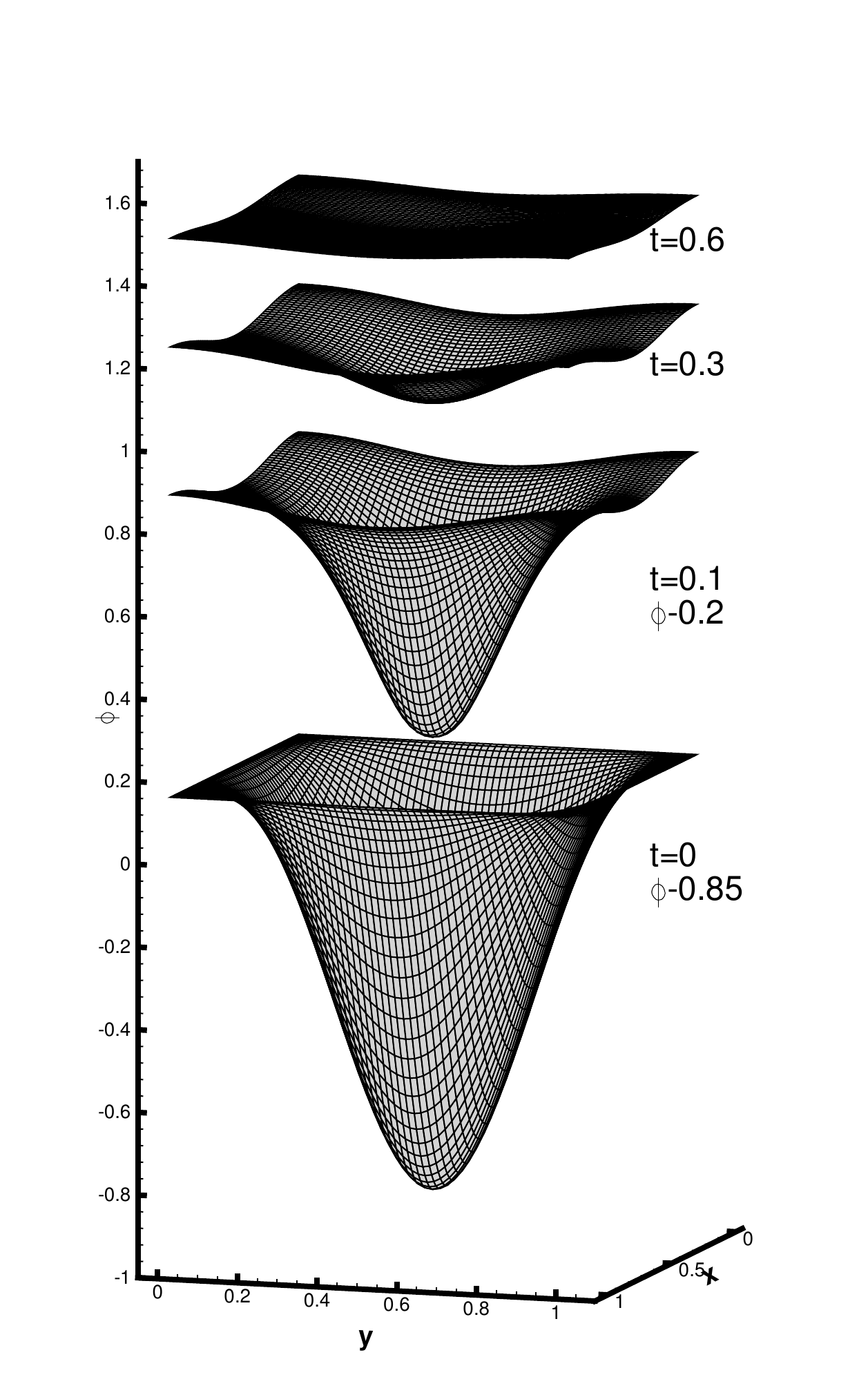}\label{Fig8b.1}}
	\subfigure[Propagating surface. CFL=2. ]{
		\includegraphics[width=0.4\textwidth]{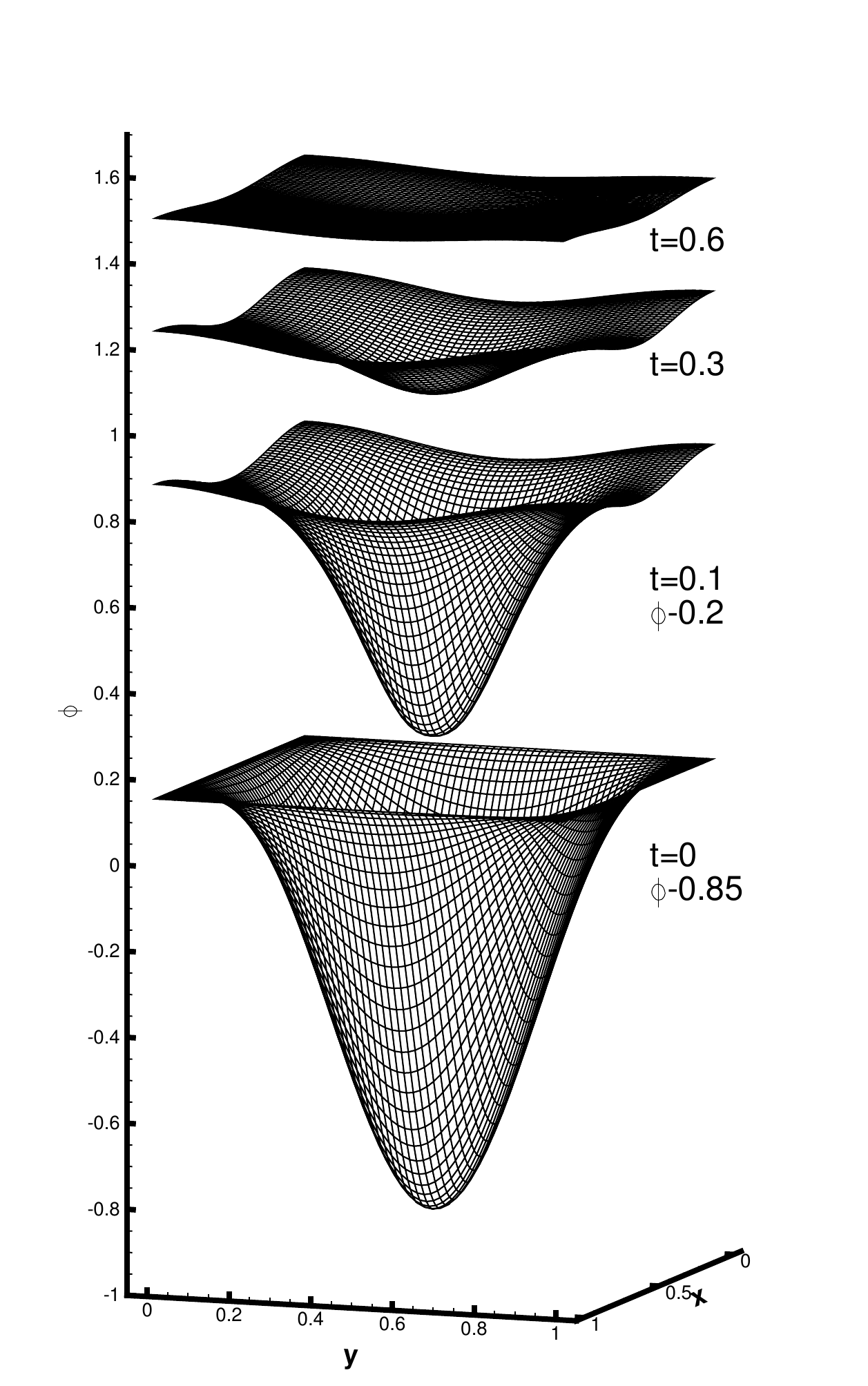}\label{Fig8b.2}}
	\caption{\em  Example \ref{ex8}: propagating surface. $\epsilon=0.1$. $60\times60$ grid points. $k=3$ with $\beta=0.4$. }
	\label{Fig8b}
\end{figure}

\begin{exa}\label{ex9}
As our last example, we consider the same problem of a propagating surface \eqref{eq:propagting} with $\epsilon=0$ but on the unit disk $x^2 + y^2 \leq 1$. The initial condition is 
$$\phi(x, y, 0) = \sin \left( \frac{\pi}{2}(x^2+y^2) \right). $$
A Dirichlet boundary condition $$\phi(x,y,t)=1+t, \quad \text{for all}\quad x^2+y^2=1$$
 is imposed. We would like to use this example to demonstrate the ability of the proposed methods to handle complex geometry and general boundary conditions. 
 The domain is discretized by embedding the boundary
 in a regular Cartesian mesh. The mesh points include all the interior points as well as the intersections of all possible Cartesian grid
 lines $x = x_i$ and $y = y_j$ with the boundary curve, as illustrated in Figure \ref{Fig9a}. In Figure \ref{Fig9b}, we plot several snapshots of the evolution of the surface. 
 %The numerical results agree with the benchmarks in the literature.
 
\end{exa}

\begin{figure}
	\centering
		\includegraphics[width=0.4\textwidth]{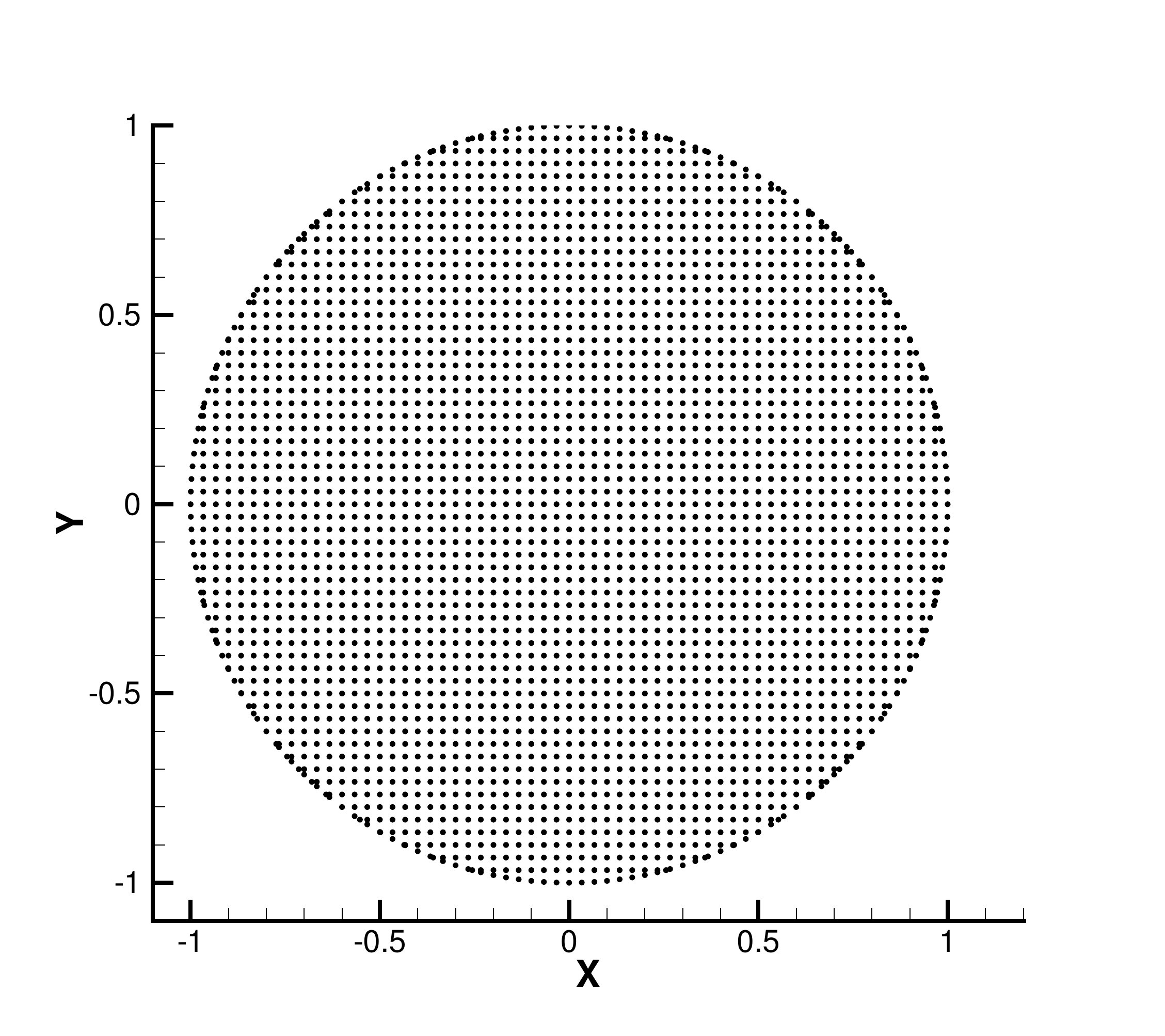}
	\caption{\em  Example \ref{ex9}: The discretization of the domain which is embedded in a Cartesian grid with $N_{x}=N_{y}=60$. }
	\label{Fig9a}
\end{figure}

\begin{figure}
	\centering
	\subfigure[Propagating surface. CFL=0.5. ]{
		\includegraphics[width=0.4\textwidth]{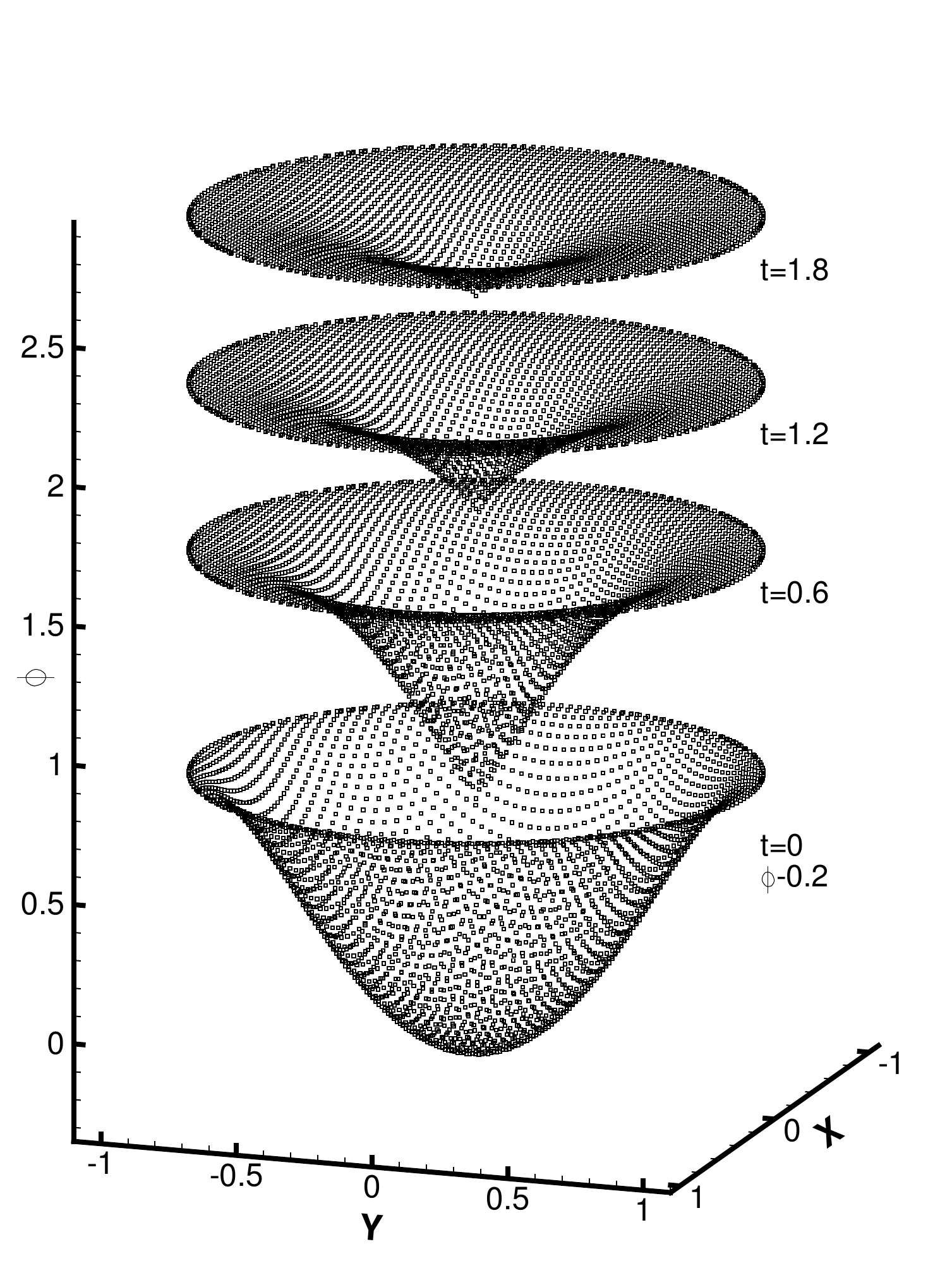}\label{Fig9.1}}
	\subfigure[Propagating surface. CFL=2. ]{
		\includegraphics[width=0.4\textwidth]{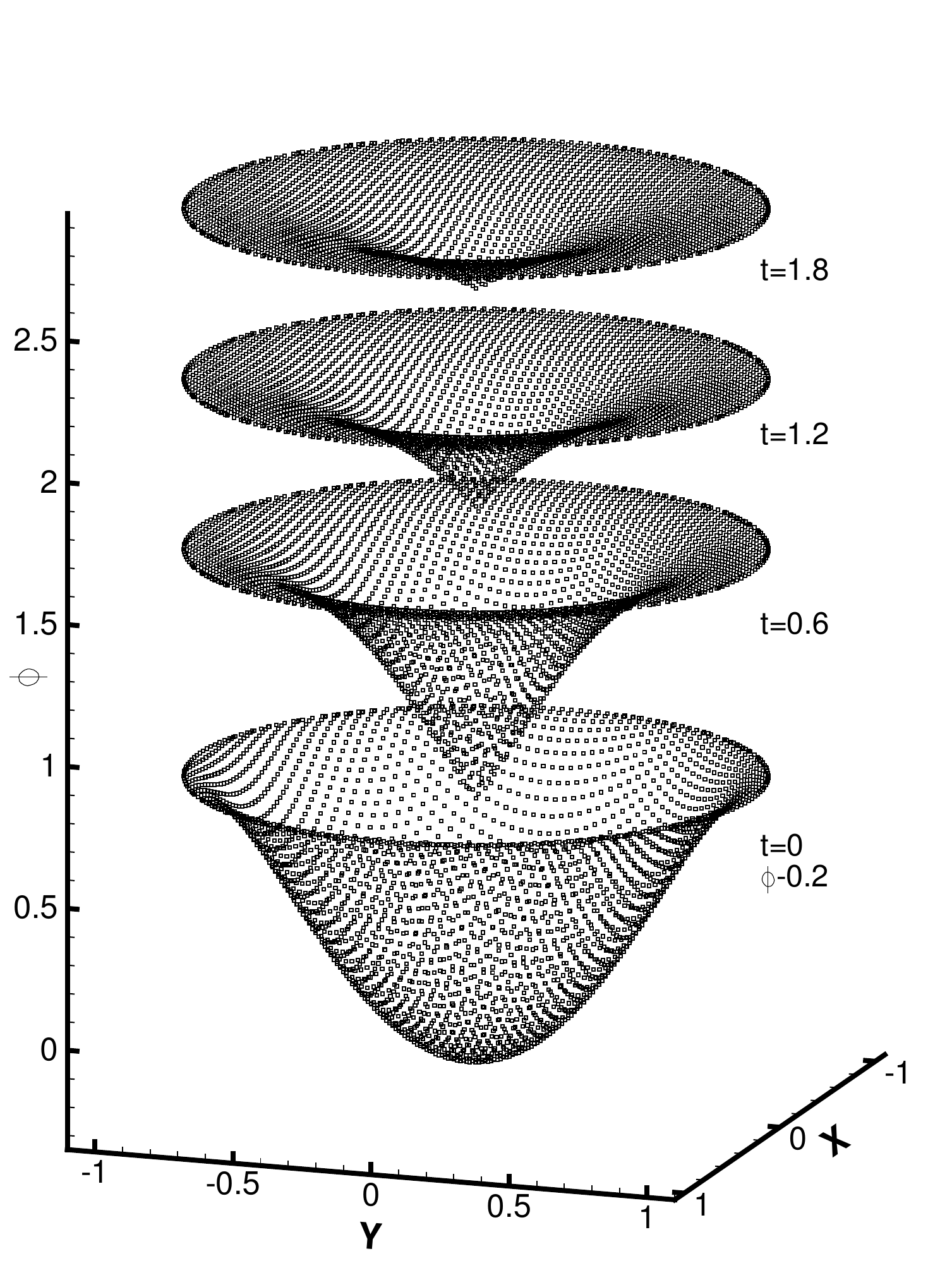}\label{Fig9.2}}
	\caption{\em  Example \ref{ex9}: propagating surface on the unit disk. $k=3$ with $\beta=0.6$. }
	\label{Fig9b}
\end{figure}

\begin{comment}

\textbf{Example 8 (steady state):}\label{ex8}
A problem from computer vision
\begin{align}
\left\{\begin{array}{ll}
\phi_{t} + I(x,y)\sqrt{\phi_{x}^2+\phi_{y}^2+1} - 1=0, & -1\leq x, y\leq 1\\
\phi(x,y,0) = 0\\
\end{array}
\right.
\end{align}
with $\phi=0$ as the boundary condition.

Firstly, we take
$$I(x, y) = 1/\sqrt{1+(1-|x|)^2+(1-|y|)^2}.$$
The exact steady solution is $\phi(x, y, \infty) = (1-|x|)(1-|y|)$.
\end{comment}

\section{Conclusion}

In this paper, we developed a class of high order non-oscillatory numerical schemes with unconditional stability for solving the nonlinear Hamilton-Jacobi (H-J) equations. The development of the schemes was based on our previous work \cite{christlieb2017kernel}, in which the spatial derivatives of a function were represented as a special kernel-based formulation. By incorporating a robust WENO methodology as well as a novel nonlinear filter, the proposed methods are able to suppress spurious oscillations and capture the correct viscosity solution.  The high order explicit strong-stability-preserving Runge-Kutta methods are employed for time integration, which do not require matrix inversion or solving nonlinear algebraic equations. As the most notable  advantage, the proposed schemes attain    unconditional stability, high order accuracy (up to third order accuracy), and the essentially non-oscillatory property at the same time.  In addition, the methods are able to handle complicated geometry and general boundary conditions if we further incorporate the inverse Lax-Wendroff boundary treatment technique. In the future, we plan to extend our schemes to solve other types of time-dependent problems, such as hyperbolic conservation laws. 
%{\color{red} We also would like to prove the convergence for the schemes towards the entropy solution for solving H-J equations.}

\bibliographystyle{abbrv}
\bibliography{ref}

\end{document}